\documentclass[11pt]{amsart}

\usepackage{amssymb}
\usepackage[a4paper]{geometry}
\usepackage[parfill]{parskip} 
\usepackage{color}
\usepackage{url}
\usepackage{graphicx}
\usepackage{url}
\usepackage{float}

\usepackage{tikz}

\newcommand{\cP}{\mathcal{P}}

\newcommand{\cN}{\mathcal{N}}

\newcommand{\N}{\mathbb{N}}

\newcommand{\leo}{}

\newtheorem{remark}{Remark}
\newtheorem{definition}{Definition}
\newtheorem{theorem}{Theorem}
\newtheorem{lemma}{Lemma}
\newtheorem{observation}{Observation}
\newtheorem{proposition}{Proposition}
\newtheorem{corollary}{Corollary}

\title{Leaf-reconstructibility of phylogenetic networks}
\author{Leo van Iersel}
  \address[Leo van Iersel]{Delft Institute of Applied Mathematics, Delft University of Technology, The Netherlands}
\author{Vincent Moulton}
  \address[Vincent Moulton]{School of Computing Sciences,  University of East Anglia, Norwich, United Kingdom}
\email[Leo van Iersel]{l.j.j.v.iersel@gmail.com}
\email[Vincent Moulton]{v.moulton@uea.ac.uk}

\begin{document}

\maketitle

\begin{abstract}
An important problem in evolutionary biology is to reconstruct the 
evolutionary history of a set $X$ of species. This history is often
represented as a phylogenetic network, that is, a connected graph with leaves labelled by 
elements in $X$ (for example, an evolutionary tree), which is usually also 
binary, i.e. all vertices have degree 1 or 3. A common approach
used in phylogenetics to build a phylogenetic network on $X$ involves constructing it from
networks on subsets of $X$. Here we \leo{consider
the question of which (unrooted) phylogenetic networks are {\em leaf-reconstructible}, i.e. 
which networks can be uniquely reconstructed from the set of networks obtained from it by deleting a single leaf} (its {\em $X$-deck}). 
This problem is closely related to
the (in)famous reconstruction conjecture in graph theory but, as we shall show, 
presents distinct challenges. We show that some large classes of phylogenetic networks
are reconstructible from their $X$-deck. This include\leo{s} phylogenetic trees, 
binary networks \leo{containing at least one
non-trivial cut-edge}, and \leo{binary level-4} networks (the 
level of a network measures how far it is from being a tree). We also show that for fixed $k$, 
almost all binary level-$k$ phylogenetic networks are leaf-reconstructible. As an application
of our results, we show that a level-3 network $N$ can be reconstructed from its
quarnets, that is, 4-leaved networks that are induced by $N$ in a certain
recursive fashion. Our results lead to several interesting open problems which 
we discuss, including the conjecture that all phylogenetic networks
with at least five leaves are leaf-reconstructible.
\end{abstract}
%
%

\section{Introduction}

An important problem in evolutionary biology is to reconstruct the 
evolutionary history of a set of species. This commonly involves 
constructing some form of phylogenetic network, that is, 
a graph (often a tree) labeled by a set $X$ of species, for which
some data (e.g. molecular sequences) has been collected. Over the 
past four decades several ways have been introduced to 
construct phylogenetic trees (see e.g. \cite{felsenstein}) and, more recently, methods have been
developed to construct more  general phylogenetic networks (see e.g. \cite{gusfield,huson}).

One particular approach for constructing phylogenetic networks involves
building them up from smaller networks. This approach is 
particularly useful when it is only 
feasible to compute networks from the biological data on small datasets (e.g. when using likelihood approaches).
The problem of building trees from smaller trees has been studied for \leo{some time} (where it is commonly known as the supertree problem; cf. e.g. \cite[Chapter 6]{SS})
but \leo{the related problem for networks} has been only considered more recently (see e.g. \cite{information,huber2011encoding} \leo{focussing on directed phylogenetic networks and~\cite{thatte06} focussing on pedigrees}).
Even so, this problem can be extremely challenging. 

In this paper, we shall present a unified approach to 
constructing phylogenetic networks from smaller networks. 
We shall \leo{consider \emph{unrooted}} phylogenetic networks (cf. \cite{gambette}).
Essentially, these are connected  graphs with leaf-set labelled by a set $X$; they are
called \leo{\emph{binary}} if the degree of every vertex is 1 or 3.
For such networks, we focus on the problem of reconstructing a phylogenetic network from
its {\em $X$-deck}, \leo{roughly speaking, this} is the collection of \leo{networks that 
is obtained by deleting one leaf and} supressing 
the resulting degree\leo{-}2 vertex. We call a network that can be reconstructed from 
its $X$-deck {\em leaf-reconstructible}. \leo{See Sections~\ref{sec:preliminaries} and~\ref{sec:xdeck} for formal definitions.}


Intriguingly, the problem of reconstructing a graph from 
its vertex deleted subgraphs has been studied for over 75 years (it 
was introduced in 1941 by Kelly and Ulam \cite{bh77}), where 
it is known as the {\em reconstruction conjecture}. In particular, 
this conjecture states that every finite simple undirected graph on three of more vertices
can be constructed from its collection of vertex deleted subgraphs.
\leo{This conjecture remains} open, but has been
shown to hold for several large and important classes of graphs \cite{bh77}.
Even so, as we shall see, although \leo{determining} leaf-reconstructibilty of a phylogenetic
network is closely related to the reconstruction conjecture, there 
are several key differences which mean that they need to be 
treated as quite distinct problems.

We now summarize the contents of the rest of the paper. In the
next section, we present some preliminaries concerning phylogenetic networks.
In Section~\ref{sec:xdeck}, we then formally define leaf-reconstructibility and explain why 
this concept is distinct from the notion of endvertex reconstructibilty 
a well-studied concept in graph reconstruction theory (see \cite[p.237]{bh77}).
In addition, we show that certain key features of a binary phylogenetic network (such
as its level and reticulation number) can be \leo{reconstructed} from its $X$-deck.

In Section~\ref{sec:decomp}, we then show that a large class of phylogenetic networks, which
we call {\em decomposable networks} are leaf-reconstructible. These are networks
\leo{containing at least one cut-edge not incident to a leaf}. To show this we first show
that any phylogenetic tree with at least 5 leaves is \leo{leaf-}reconstructible. We also note that \leo{phylogenetic trees with 4 leaves are} not \leo{leaf-}reconstructible. Our result concerning decomposable 
networks is analogous to a result by Yongzhi \cite{y88} who showed that
the graph reconstruction conjecture can be restricted to considering 2-connected graphs.

The fact that decomposable networks are reconstructible 
implies that we can restrict our attention to leaf-reconstructibility of {\em simple} networks,
that is, non-decomposable networks. An important feature of a phylogenetic network $N$  is 
its {\em level}, which measures how far away the network is from being
a phylogenetic tree (in particular, trees \leo{are} level-0 networks). By considering 
certain subconfigurations in simple networks, in Section~\ref{sec:simple}, we prove that, for fixed $k$, 
almost all level-$k$ networks are leaf-reconstructible. 

In Section~\ref{sec:number}, we then turn to the problem of computing the
smallest number of elements in the $X$-deck of a leaf-reconstructible 
\leo{network} that are required \leo{to} reconstruct \leo{it}, which we call \leo{its} 
leaf-reconstruction number. This is analogous to the so-called
reconstruction number of a graph (cf. \cite{aflm} for a survey on these numbers). 
In particular, we show that the leaf-reconstruction
number of any phylogenetic tree on 5 or more leaves is 2, unless
it is a star-tree in which case \leo{this number} is 3. We also show that this implies 
that the leaf-reconstruction
number of any decomposable phylogenetic network with at least 5 leaves
is 2.

In Section~\ref{sec:low}, we turn our attention to low-level networks, 
showing that \leo{all binary level-4 networks with at least five leaves have leaf-reconstruction number at most~2. The proof uses several lemmas that could be useful in studying the leaf-reconstructibility of higher-level networks.}

In practice, most methods for constructing
phylogenetic networks from smaller networks to date have focussed on using 
networks with small numbers of leaves (in the rooted \leo{case}, often 3-leaved networks). In
Section~\ref{sec:quarnets}, by using a recursive argument and our previous results, we show that
any level-3 network can be reconstructed from its set of {\em quarnets}.
Essentially, these are 4-leaved networks which are obtained from $N$ 
by selecting 4 leaves in the network\leo{, removing all other leaves and suppressing degree-2 vertices, multi-edges and biconnected components with two incident cut-edges}.  Our result on quartnets
is analogous to results presented in \cite{level2trinets} for level-2 rooted phylogenetic networks.

Several variants of the reconstruction conjecture have been considered
in the literature (see \cite{bh77}). We can also consider variants for
phylogenetic networks. In Section~\ref{sec:idge}, we consider the problem
of reconstructing a phylogenetic network from its collection of edge-deleted
subgraphs, showing that \leo{in this setting} we can sharpen the leaf-\leo{reconstructibility} bounds that 
we \leo{previously} obtained. We then conclude in the last section
by discussing the problem of reconstructing directed phylogenetic 
networks, as well as various open problems.

\section{Preliminaries}\label{sec:preliminaries}

In this section, we present some preliminaries concerning phylogenetic networks (cf.  \cite{gambette})

Let~$X$ be a finite set with~$|X|\geq 2$.

\begin{definition}
A \emph{phylogenetic tree} on~$X$ is a tree with no degree-2 vertices in which the leaves (degree-1 vertices) are bijectively labelled by the elements of~$X$. 
\end{definition}

A \emph{biconnected component} of a graph is a maximal 2-connected subgraph and it is called a \emph{blob} if it contains at least two edges.

\begin{definition}
A \emph{phylogenetic network} on~$X$ is a connected graph~$N$ such that contracting each blob (one by one) into a single vertex gives a phylogenetic tree on~$X$. 
\end{definition}

A bipartition $A|B$ of~$X$, with~$A,B\neq\emptyset$ is a \emph{split} of a phylogenetic network~$N$ if~$N$ contains a cut-edge~$e$ such that the elements of~$A$ and~$B$ are the leaf-labels of the two connected components of~$N-e$. If this is the case, we also say that the split $A|B$ is \emph{induced} by~$e$. From the definition of a phylogenetic network it follows that each of its cut-edges induces a split and no two cut-edges induce the same split. Moreover, the phylogenetic tree obtained by contracting each blob of~$N$ into a single vertex is the unique phylogenetic tree that has precisely the same splits as~$N$. This phylogenetic tree is denoted~$T(N)$, see Figure~\ref{fig:prelim} for an example.

A cut-edge is called \emph{trivial} if at least one of its endpoints is a leaf. A phylogenetic network with at least one nontrivial cut-edge is called \emph{decomposable}. We call a phylogenetic network \emph{simple} if it has precisely one blob. 

\begin{definition}
A \emph{pseudo-network} on~$X$ is a multigraph with no degree-2 vertices in which the leaves (degree-1 vertices) are bijectively labelled by the elements of~$X$.
\end{definition}

\leo{Hence, each phylogenetic tree is a phylogenetic network and each phylogenetic network is a pseudo-network.} We let~$L(N),V(N),E(N)$ denote, respectively, the set of leaves, vertices and edges of \leo{a pseudo-network~$N$. In addition, the} phylogenetic tree~$T(N)$ is defined as the phylogenetic tree obtained by contracting each blob of~$N$ into a single vertex and suppressing any resulting degree-2 vertices. Two pseudo-networks~$N,N'$ are \emph{equivalent}, denoted~$N\sim N'$ if there exists a graph isomorphism between~$N$ and~$N'$ that is the identity on~$X$.

A \leo{pseudo-network} is called \emph{binary} if 
every non-leaf vertex has degree~3.  Note that our definition of
a binary phylogenetic network is slightly different from the one presented in \cite{gambette}, 
and has the advantage that for fixed $X$, there are only finitely many phylogenetic 
networks with \leo{fixed level and} leaf-set $X$ (essentially because the number of phylogenetic
trees with leaf set $X$ is finite cf. \cite{SS}).
Note also that a binary phylogenetic network is simple precisely when it is not decomposable \leo{and not a star tree}. However, 
this is not the case for nonbinary networks.

\section{$X$-decks and leaf-reconstructibility}\label{sec:xdeck}

In this section we introduce the concept of leaf-reconstructibility. We begin
by defining the $X$-deck for a phylogenetic network on $X$. 

Given a phylogenetic network~$N$ and a vertex~$v\in V(N)$, the pseudo-network~$N_v$ is the result of deleting vertex~$v$ from~$N$, together with its incident edges, and suppressing resulting degree-2 vertices. See Figure~\ref{fig:prelim} for an example. Given a phylogenetic 
network~$N$ on~$X$ and~$U\subseteq V(N)$, the $U$-\emph{deck} of~$N$ is the multiset $\{N_u \mid u\in U\}$. 

\begin{figure}
\centering
 \begin{tikzpicture}[scale=.8]
	 \tikzset{lijn/.style={very thick}}
	 
	 \begin{scope}[xshift=0cm,yshift=0cm]
	\draw[very thick, fill, radius=0.06] (-.5,1) circle node[left] {$b$};
	\draw[very thick, fill, radius=0.06] (1,-.5) circle node[below] {$c$};
	\draw[very thick, fill, radius=0.06] (1,2.5) circle node[above] {$a$};
	\draw[very thick, fill, radius=0.06] (4,-.5) circle node[below] {$d$};
	\draw[very thick, fill, radius=0.06] (5.5,2.5) circle node[above right] {$e$};
	\draw[very thick, fill, radius=0.06] (0,1) circle;
	\draw[very thick, fill, radius=0.06] (1,0) circle;
	\draw[very thick, fill, radius=0.06] (1,2) circle;
	\draw[very thick, fill, radius=0.06] (2,1) circle;
	\draw[very thick, fill, radius=0.06] (3,1) circle;
	\draw[very thick, fill, radius=0.06] (4,0) circle;
	\draw[very thick, fill, radius=0.06] (5,1) circle;
	\draw[very thick, fill, radius=0.06] (4,2) circle;
	\draw[very thick, fill, radius=0.06] (5,2) circle;
	\draw[lijn] (1,0) -- (0,1);
	\draw[lijn] (1,0) -- (2,1);
	\draw[lijn] (0,1) -- (1,2);
	\draw[lijn] (1,2) -- (2,1);
	\draw[lijn] (2,1) -- (3,1);
	\draw[lijn] (3,1) -- (4,2);
	\draw[lijn] (3,1) -- (4,0);
	\draw[lijn] (4,0) -- (5,1);
	\draw[lijn] (4,2) -- (5,1);
	\draw[lijn] (4,2) -- (5,2);
	\draw[lijn] (5,1) -- (5,2);
	\draw[lijn] (0,1) -- (-.5,1);
	\draw[lijn] (1,0) -- (1,-.5);
	\draw[lijn] (1,2) -- (1,2.5);
	\draw[lijn] (4,0) -- (4,-.5);
	\draw[lijn] (5,2) -- (5.5,2.5);
	\draw (2.5,-1.5) node {$N$};
	\end{scope}
	
	 \begin{scope}[xshift=0cm,yshift=-5cm]
	\draw[very thick, fill, radius=0.06] (1,1) circle node[left] {$b$};
	\draw[very thick, fill, radius=0.06] (2,0) circle node[below] {$c$};
	\draw[very thick, fill, radius=0.06] (2,2) circle node[above] {$a$};
	\draw[very thick, fill, radius=0.06] (4,0) circle node[below] {$d$};
	\draw[very thick, fill, radius=0.06] (4,2) circle node[above right] {$e$};
	\draw[very thick, fill, radius=0.06] (2,1) circle;
	\draw[very thick, fill, radius=0.06] (3,1) circle;
	\draw[lijn] (1,1) -- (2,1);
	\draw[lijn] (2,1) -- (2,2);
	\draw[lijn] (2,1) -- (2,0);
	\draw[lijn] (2,1) -- (3,1);
	\draw[lijn] (3,1) -- (4,0);
	\draw[lijn] (3,1) -- (4,2);
	\draw (2.5,-1.5) node {$T(N)$};
	\end{scope}
	
	 \begin{scope}[xshift=8cm,yshift=0cm]
	\draw[very thick, fill, radius=0.06] (-.5,1) circle node[left] {$b$};
	\draw[very thick, fill, radius=0.06] (1,-.5) circle node[below] {$c$};
	\draw[very thick, fill, radius=0.06] (4,-.5) circle node[below] {$d$};
	\draw[very thick, fill, radius=0.06] (5.5,2.5) circle node[above right] {$e$};
	\draw[very thick, fill, radius=0.06] (0,1) circle;
	\draw[very thick, fill, radius=0.06] (1,0) circle;
	\draw[very thick, fill, radius=0.06] (2,1) circle;
	\draw[very thick, fill, radius=0.06] (3,1) circle;
	\draw[very thick, fill, radius=0.06] (4,0) circle;
	\draw[very thick, fill, radius=0.06] (5,1) circle;
	\draw[very thick, fill, radius=0.06] (4,2) circle;
	\draw[very thick, fill, radius=0.06] (5,2) circle;
	\draw[lijn] (1,0) -- (0,1);
	\draw[lijn] (1,0) -- (2,1);
	\draw[lijn] (0,1) -- (2,1);
	\draw[lijn] (2,1) -- (3,1);
	\draw[lijn] (3,1) -- (4,2);
	\draw[lijn] (3,1) -- (4,0);
	\draw[lijn] (4,0) -- (5,1);
	\draw[lijn] (4,2) -- (5,1);
	\draw[lijn] (4,2) -- (5,2);
	\draw[lijn] (5,1) -- (5,2);
	\draw[lijn] (0,1) -- (-.5,1);
	\draw[lijn] (1,0) -- (1,-.5);
	\draw[lijn] (4,0) -- (4,-.5);
	\draw[lijn] (5,2) -- (5.5,2.5);
	\draw (2.5,-1.5) node {$N_a$};
	\end{scope}

	 \begin{scope}[xshift=8cm,yshift=-5cm]
	\draw[very thick, fill, radius=0.06] (-.5,1) circle node[left] {$b$};
	\draw[very thick, fill, radius=0.06] (1,-.5) circle node[below] {$c$};
	\draw[very thick, fill, radius=0.06] (1,2.5) circle node[above] {$a$};
	\draw[very thick, fill, radius=0.06] (4,-.5) circle node[below] {$d$};
	\draw[very thick, fill, radius=0.06] (0,1) circle;
	\draw[very thick, fill, radius=0.06] (1,0) circle;
	\draw[very thick, fill, radius=0.06] (1,2) circle;
	\draw[very thick, fill, radius=0.06] (2,1) circle;
	\draw[very thick, fill, radius=0.06] (3,1) circle;
	\draw[very thick, fill, radius=0.06] (4,0) circle;
	\draw[very thick, fill, radius=0.06] (5,1) circle;
	\draw[very thick, fill, radius=0.06] (4,2) circle;
	\draw[lijn] (1,0) -- (0,1);
	\draw[lijn] (1,0) -- (2,1);
	\draw[lijn] (0,1) -- (1,2);
	\draw[lijn] (1,2) -- (2,1);
	\draw[lijn] (2,1) -- (3,1);
	\draw[lijn] (3,1) -- (4,2);
	\draw[lijn] (3,1) -- (4,0);
	\draw[lijn] (4,0) -- (5,1);
	\draw[lijn] (0,1) -- (-.5,1);
	\draw[lijn] (1,0) -- (1,-.5);
	\draw[lijn] (1,2) -- (1,2.5);
	\draw[lijn] (4,0) -- (4,-.5);
	\draw[lijn] (4,2) to[out=-20,in=110] (5,1);
	\draw[lijn] (4,2) to[out=-70,in=160] (5,1);	
	\draw (2.5,-1.5) node {$N_e$};
	\end{scope}

	\end{tikzpicture}
\caption{\label{fig:prelim} A binary phylogenetic network~$N$, the phylogenetic tree~$T(N)$, and two elements of the $X$-deck of~$N$: the phylogenetic network~$N_a$ and the pseudo-network~$N_e$.}
\end{figure}
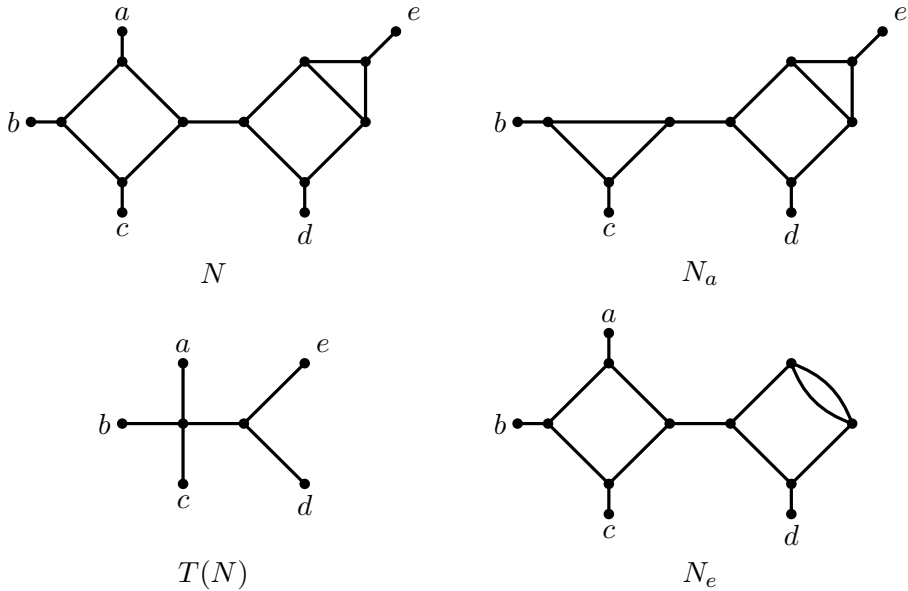

A \emph{$U$-reconstruction} of a network~$N$ on~$X$ is a network~$N'$ on~$X$ with~$V(N')=V(N)$ and~$N'_u\sim N_u$ for all~$u\in U$.
We call a phylogenetic network~$N$ $U$-\emph{reconstructible} if every $U$-reconstruction of~$N$ is equivalent to~$N$.  The $U$-\emph{reconstruction number} of a network~$N$ on~$X$ is the smallest~$k$ for which there is a subset $U'\subseteq U$ with~$|U'|=k$ such that~$N$ is~$U'$-reconstructible.

We are usually interested in the case that $U\subseteq X$. For the case that~$U=X$, we will also refer to $X$-reconstruction, $X$-reconstructible and $X$-reconstruction number as \emph{leaf-reconstruction}, \emph{leaf-reconstructible} and \emph{leaf-reconstruction number}, respectively.
It could also be interesting to take $U=V(N)$, but we shall not consider this possibility in this paper. 

If~$N$ is a binary network on~$X$ and~$x\in X$ then~$N$ can 
be obtained from~$N_x$ by \emph{attaching}~$x$ to some edge~$e$, i.e., to 
subdivide~$e$ by a new vertex~$v$ and adding a vertex labelled~$x$ and an edge between~$v$ and~$x$.
For example, the network~$N$ in Figure~\ref{fig:prelim} is $\{e\}$-reconstructible since it can be uniquely reconstructed from~$N_e$ by attaching leaf~$e$ to one of the multi-edges. Hence, this network has leaf-reconstruction number~1. The networks in Figure~\ref{fig:notleafreconstructible} are 
not leaf-reconstructible since both networks have the same~$X$-deck.

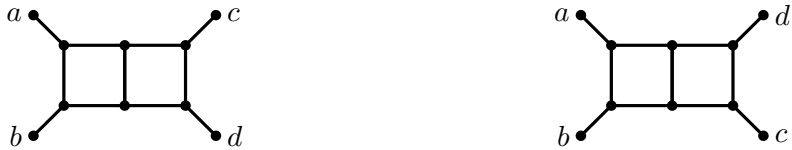
\begin{figure}
\centering
 \begin{tikzpicture}[scale=.8]
	 \tikzset{lijn/.style={very thick}}
	 
	 \begin{scope}[xshift=0cm,yshift=0cm]
	\draw[very thick, fill, radius=0.06] (-.5,1.5) circle node[left] {$a$};
	\draw[very thick, fill, radius=0.06] (-.5,-.5) circle node[left] {$b$};
	\draw[very thick, fill, radius=0.06] (2.5,1.5) circle node[right] {$c$};
	\draw[very thick, fill, radius=0.06] (2.5,-.5) circle node[right] {$d$};
	\draw[very thick, fill, radius=0.06] (0,0) circle;
	\draw[very thick, fill, radius=0.06] (1,0) circle;
	\draw[very thick, fill, radius=0.06] (0,1) circle;
	\draw[very thick, fill, radius=0.06] (1,1) circle;
	\draw[very thick, fill, radius=0.06] (2,1) circle;
	\draw[very thick, fill, radius=0.06] (2,0) circle;
	\draw[lijn] (0,0) -- (0,1);
	\draw[lijn] (0,0) -- (1,0);
	\draw[lijn] (0,1) -- (1,1);
	\draw[lijn] (1,0) -- (1,1);
	\draw[lijn] (1,1) -- (2,1);
	\draw[lijn] (1,1) -- (1,0);
	\draw[lijn] (1,0) -- (2,0);
	\draw[lijn] (2,1) -- (2,0);
	\draw[lijn] (0,0) -- (-.5,-.5);
	\draw[lijn] (0,1) -- (-.5,1.5);
	\draw[lijn] (2,0) -- (2.5,-.5);
	\draw[lijn] (2,1) -- (2.5,1.5);
	\end{scope}

	 \begin{scope}[xshift=9cm,yshift=0cm]
	\draw[very thick, fill, radius=0.06] (-.5,1.5) circle node[left] {$a$};
	\draw[very thick, fill, radius=0.06] (-.5,-.5) circle node[left] {$b$};
	\draw[very thick, fill, radius=0.06] (2.5,1.5) circle node[right] {$d$};
	\draw[very thick, fill, radius=0.06] (2.5,-.5) circle node[right] {$c$};
	\draw[very thick, fill, radius=0.06] (0,0) circle;
	\draw[very thick, fill, radius=0.06] (1,0) circle;
	\draw[very thick, fill, radius=0.06] (0,1) circle;
	\draw[very thick, fill, radius=0.06] (1,1) circle;
	\draw[very thick, fill, radius=0.06] (2,1) circle;
	\draw[very thick, fill, radius=0.06] (2,0) circle;
	\draw[lijn] (0,0) -- (0,1);
	\draw[lijn] (0,0) -- (1,0);
	\draw[lijn] (0,1) -- (1,1);
	\draw[lijn] (1,0) -- (1,1);
	\draw[lijn] (1,1) -- (2,1);
	\draw[lijn] (1,1) -- (1,0);
	\draw[lijn] (1,0) -- (2,0);
	\draw[lijn] (2,1) -- (2,0);
	\draw[lijn] (0,0) -- (-.5,-.5);
	\draw[lijn] (0,1) -- (-.5,1.5);
	\draw[lijn] (2,0) -- (2.5,-.5);
	\draw[lijn] (2,1) -- (2.5,1.5);
	\end{scope}

	\end{tikzpicture}
\caption{\label{fig:notleafreconstructible} A pair of phylogenetic networks that are not leaf-reconstructible (and not even $V(N)$-reconstructible) but that are end-vertex reconstructible.}
\end{figure}

\begin{remark}
At first sight it might appear that leaf-reconstructibility of a phylogenetic network
could be equivalent to endvertex-reconstructibility (where one tries to reconstruct a graph from
the deck obtained by deleting only its endvertices, \leo{i.e. leaves}, cf. \cite[p.237]{bh77}). However,
these are distinct concepts. For example, the 
phylogenetic networks \leo{in Figure~\ref{fig:notendvertexreconstructible}}
are leaf-reconstructible. However, considered as graphs (with no labels), they are not 
endvertex-reconstructible, as they both have the same endvertex deck \cite[p.313]{kp81}. \leo{Conversely, the networks in Figure~\ref{fig:notleafreconstructible} are endvertex-reconstructible but not leaf-reconstructible.}
\end{remark}


\begin{figure}
\centering
 \begin{tikzpicture}[scale=.8]
	 \tikzset{lijn/.style={very thick}}
	 \begin{scope}[xshift=0cm,yshift=0cm]
	\draw[very thick, fill, radius=0.06] (-.5,-.5) circle node[left] {$x$};
	\draw[very thick, fill, radius=0.06] (4.5,-.5) circle node[right] {$z$};
	\draw[very thick, fill, radius=0.06] (4.5,.5) circle node[right] {$y$};
	\draw[very thick, fill, radius=0.06] (0,0) circle;
	\draw[very thick, fill, radius=0.06] (2,0) circle;
	\draw[very thick, fill, radius=0.06] (4,0) circle;
	\draw[very thick, fill, radius=0.06] (2,.5) circle;
	\draw[very thick, fill, radius=0.06] (2,1) circle;
	\draw[very thick, fill, radius=0.06] (1.67,1.5) circle;
	\draw[very thick, fill, radius=0.06] (2.33,1.5) circle;
	\draw[very thick, fill, radius=0.06] (1.33,2) circle;
	\draw[very thick, fill, radius=0.06] (2.66,2) circle;
	\draw[very thick, fill, radius=0.06] (2,3) circle;
	\draw[lijn] (0,0) -- (4,0);
	\draw[lijn] (0,0) -- (2,.5);
	\draw[lijn] (2,0) -- (2,1);
	\draw[lijn] (4,0) -- (2.33,1.5);
	\draw[lijn] (4,0) -- (2,3);
	\draw[lijn] (0,0) -- (2,3);
	\draw[lijn] (2,1) -- (2.67,2);
	\draw[lijn] (2,1) -- (1.33,2);
	\draw[lijn] (2,3) -- (1.67,1.5);
	\draw[lijn] (-.5,-.5) -- (0,0);
	\draw[lijn] (4,0) -- (4.5,0.5);
	\draw[lijn] (4,0) -- (4.5,-0.5);
	\end{scope}
	
	 \begin{scope}[xshift=9cm,yshift=0cm]
	\draw[very thick, fill, radius=0.06] (-.5,-.5) circle node[left] {$z$};
	\draw[very thick, fill, radius=0.06] (4.5,-.5) circle node[right] {$x$};
	\draw[very thick, fill, radius=0.06] (-.5,.5) circle node[left] {$y$};
	\draw[very thick, fill, radius=0.06] (0,0) circle;
	\draw[very thick, fill, radius=0.06] (2,0) circle;
	\draw[very thick, fill, radius=0.06] (4,0) circle;
	\draw[very thick, fill, radius=0.06] (2,.5) circle;
	\draw[very thick, fill, radius=0.06] (2,1) circle;
	\draw[very thick, fill, radius=0.06] (1.67,1.5) circle;
	\draw[very thick, fill, radius=0.06] (2.33,1.5) circle;
	\draw[very thick, fill, radius=0.06] (1.33,2) circle;
	\draw[very thick, fill, radius=0.06] (2.66,2) circle;
	\draw[very thick, fill, radius=0.06] (2,3) circle;
	\draw[lijn] (0,0) -- (4,0);
	\draw[lijn] (0,0) -- (2,.5);
	\draw[lijn] (2,0) -- (2,1);
	\draw[lijn] (4,0) -- (2.33,1.5);
	\draw[lijn] (4,0) -- (2,3);
	\draw[lijn] (0,0) -- (2,3);
	\draw[lijn] (2,1) -- (2.67,2);
	\draw[lijn] (2,1) -- (1.33,2);
	\draw[lijn] (2,3) -- (1.67,1.5);
	\draw[lijn] (-.5,-.5) -- (0,0);
	\draw[lijn] (0,0) -- (-.5,0.5);
	\draw[lijn] (4,0) -- (4.5,-0.5);
	\end{scope}

	\end{tikzpicture}
\caption{\label{fig:notendvertexreconstructible} A pair of phylogenetic networks that are not end-vertex reconstructible but that are leaf-reconstructible.}
\end{figure}
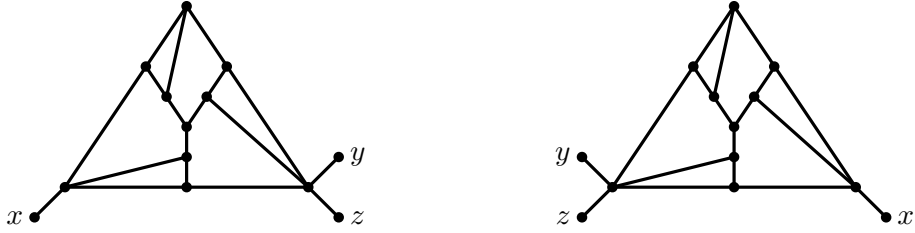


We call a class~$\cN$ of phylogenetic networks \emph{leaf-reconstructible} if each~$N\in\cN$ is leaf-reconstructible. A function~$f$ defined on~$\cN$ is \emph{leaf-reconstructible} if for each~$N\in\cN$ and for any leaf-reconstrution~$M$ of~$N$ we have~$f(N)=f(M)$. Class~$\cN$ is \emph{weakly leaf-reconstructible} if, for each network~$N\in\cN$, all leaf-reconstructions of~$N$ that are in~$\cN$ are equivalent to~$N$. Class~$\cN$ is \emph{leaf-recognizable} if, for each network~$N\in\cN$, every leaf-reconstruction of~$N$ is also in~$\cN$.

%

\begin{observation}
A class~$\cN$ of phylogenetic networks is leaf-reconstructible if and only if it is leaf-recognizable and weakly leaf-reconstructible.
\end{observation}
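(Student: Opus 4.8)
The plan is to prove the two implications by directly unwinding the three definitions; the whole statement is essentially a bookkeeping exercise about the scope of the phrase ``every leaf-reconstruction''. Before starting I would record the (implicit, and for this paper harmless) convention that $\cN$ is closed under the equivalence relation $\sim$, i.e.\ a network equivalent to a member of $\cN$ is itself a member of $\cN$; this is the only external fact used, and only in the forward direction.

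For the forward implication I would assume $\cN$ is leaf-reconstructible, fix an arbitrary $N\in\cN$, and argue as follows. If $M$ is any leaf-reconstruction of $N$, then by hypothesis $M\sim N$, hence $M\in\cN$; as $N$ was arbitrary this is exactly leaf-recognizability. Weak leaf-reconstructibility is then immediate, since it only requires that those leaf-reconstructions of $N$ which happen to lie in $\cN$ be equivalent to $N$, whereas we have just shown that \emph{all} of them are.

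For the converse I would assume $\cN$ is both leaf-recognizable and weakly leaf-reconstructible, fix $N\in\cN$, and take an arbitrary leaf-reconstruction $M$ of $N$ --- that is, a network on $X$ with $V(M)=V(N)$ and $M_u\sim N_u$ for all $u\in X$, not assumed in advance to belong to $\cN$. Leaf-recognizability upgrades this to $M\in\cN$, and then weak leaf-reconstructibility yields $M\sim N$. Thus every leaf-reconstruction of $N$ is equivalent to $N$, i.e.\ $N$ is leaf-reconstructible; since $N$ was an arbitrary element of $\cN$, the class $\cN$ is leaf-reconstructible.

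I do not expect a genuine obstacle here: the single point that needs care is keeping the quantifiers straight --- leaf-recognizability is precisely the ingredient that lets one pass from the ``$\cN$-restricted'' uniqueness asserted by weak leaf-reconstructibility to the unrestricted uniqueness demanded by leaf-reconstructibility --- and this is the step I would be most careful to phrase correctly.
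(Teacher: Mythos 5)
Your proof is correct and is exactly the definitional unwinding the paper intends; the paper states this as an observation without any written proof. Your explicit remark that the forward direction (leaf-recognizability) requires $\cN$ to be closed under the equivalence $\sim$ is a fair point of care that the paper leaves implicit, and it holds for every class considered there.
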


We conclude this section by showing that certain features of a binary phylogenetic 
network on $X$ can be reconstructed from its $X$-deck.
The \emph{reticulation number} of a \leo{pseudo-}network~$N$ is defined as $|E(N)| - |V(N)| + 1$. The \emph{level} of~$N$ is the maximum reticulation number of a biconnected component of~$N$. \leo{A phylogenetic network} is called a \emph{level-$k$} network, with~$k\in\N$, if its level is at most~$k$.

\begin{proposition}
The number of edges, the number of vertices, the reticulation number and the level of a binary phylogenetic network~$N$ are leaf-reconstructible.
\end{proposition}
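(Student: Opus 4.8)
The plan is to handle the four quantities in order of increasing difficulty: the numbers of edges and vertices, then the reticulation number (which is determined by these two), and finally the level.

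Fix any leaf $x\in X$ and let $v$ be its neighbour in $N$; I will assume $v$ is not a leaf, which is automatic when $|X|\ge 3$ and, when $|X|=2$, excludes only the single edge, for which all four claims are trivial. Since $N$ is binary, $v$ then has degree $3$, so forming $N_x$ removes exactly two vertices ($x$ and the suppressed vertex $v$) and exactly two edges (the pendant edge at $x$, plus the edge lost when the two remaining edges at $v$ are merged). Hence $|V(N_x)|=|V(N)|-2$ and $|E(N_x)|=|E(N)|-2$. Now let $M$ be any leaf-reconstruction of $N$. By definition $V(M)=V(N)$, so $|V(M)|=|V(N)|$; and since $|V(M_x)|=|V(N_x)|=|V(M)|-2$, the neighbour of $x$ in $M$ must also have degree exactly $3$ (any other degree would remove the wrong number of vertices or leave $M_x$ with an unsuppressed degree-$1$ vertex). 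The same count then gives $|E(M)|=|E(M_x)|+2=|E(N_x)|+2=|E(N)|$. Thus the numbers of vertices and edges are leaf-reconstructible, and the reticulation number $|E(\cdot)|-|V(\cdot)|+1$ follows at once.

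For the level, the plan is to prove the stronger statement that leaf-deletion leaves the level unchanged: $\mathrm{level}(N_x)=\mathrm{level}(N)$ for every $x$, and likewise for $M$. This suffices, since then $\mathrm{level}(M)=\mathrm{level}(M_x)=\mathrm{level}(N_x)=\mathrm{level}(N)$. To prove it, let again $v$ be the degree-$3$ neighbour of $x$ and consider its two non-pendant edges. They cannot lie in two distinct blobs (this would force $v$ to have degree $1$ in each, contradicting $2$-connectivity), so either both are bridges of $N$, or both lie in a single blob $B$. In the first case, suppressing $v$ only fuses two bridges into one and leaves every blob untouched. In the second case, suppressing $v$ replaces $B$ by the biconnected component $B'$ obtained by deleting $v$ from $B$ and joining its two $B$-neighbours by a single edge; this decreases $|V(B)|$ and $|E(B)|$ by the same amount, so the reticulation number of $B'$ equals that of $B$ --- with the understanding that when $B$ has only two edges, $B'$ is a loop, which is still a biconnected component of reticulation number $1$. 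Since every other biconnected component is unaffected, the maximum reticulation number over biconnected components, i.e.\ the level, is preserved in both cases.

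I expect the last structural claim to be the main obstacle --- in particular, handling the degenerate cases correctly (two-edge blobs collapsing to loops, and the biconnected-component bookkeeping for the pseudo-network $N_x$); everything else is routine counting together with the convention that a reconstruction has the same vertex set as $N$.
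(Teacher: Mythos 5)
Your proof is correct and follows essentially the same route as the paper: compute how $|V|$, $|E|$, the reticulation number and the level change when a single leaf is deleted (by $-2$, $-2$, $0$, $0$ except in the degenerate two-vertex case) and transfer these counts to any reconstruction. The paper simply asserts these facts, whereas you supply the justifications it leaves implicit (deducing that the leaf's neighbour in a reconstruction has degree~3 from the vertex count, and the blob-by-blob bookkeeping for the level), so there is nothing to add.
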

\begin{proof}
Let~$N$ be any phylogenetic network and~$x\in L(N)$.

If~$|V(N)|=2$, then $|V(N_x)| = |V(N)|-1$ and $|E(N_x)| = |E(N)|-1$. Moreover, the level and reticulation number of~$N_x$ are~0, the same as the reticulation number and level of~$N$. 

If~$|V(N)|\geq 3$, then $|V(N_x)| = |V(N)|-2$ and $|E(N_x)| = |E(N)|-2$. Moreover, the level and reticulation number of~$N_x$ are the same as the reticulation number and, respectively, level of~$N$. 

In both cases, the proposition follows directly. 
\end{proof}

The following is a direct consequence.

\begin{corollary}\label{cor:recog}
For each~$k\in\N$, the class of binary level-$k$ phylogenetic networks is leaf-recognizable.
\end{corollary}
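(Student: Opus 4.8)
The plan is to unwind the definition. Fix $k\in\N$ and let $\cN_k$ denote the class of binary level-$k$ phylogenetic networks. To say that $\cN_k$ is leaf-recognizable means that for every $N\in\cN_k$ and every leaf-reconstruction $M$ of $N$ — that is, every network $M$ on $X$ with $V(M)=V(N)$ and $M_x\sim N_x$ for all $x\in X$ — the network $M$ again lies in $\cN_k$; so I must show that $M$ is binary and has level at most $k$. The level part is immediate from the Proposition just proved: level is a leaf-reconstructible function, so $\mathrm{level}(M)=\mathrm{level}(N)\le k$; the Proposition also gives $|V(M)|=|V(N)|$ and $|E(M)|=|E(N)|$ (the former being already built into the definition of a leaf-reconstruction). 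Hence the only remaining point is that $M$ is binary.

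For that I would argue by a degree count. Since $M$ is a network on $X$ it has exactly $|X|$ leaves, hence $|V(N)|-|X|$ non-leaf vertices; since pseudo-networks — and hence phylogenetic networks — have no degree-$2$ vertices and $M$ is connected, every non-leaf vertex of $M$ has degree at least $3$. Summing degrees, $\sum_{v\in V(M)}\deg_M(v)=2|E(M)|=2|E(N)|$, and because $N$ is binary this equals $|X|\cdot 1+(|V(N)|-|X|)\cdot 3$. Removing the contribution $|X|$ of the leaves of $M$, the non-leaf vertices of $M$ have degrees summing to exactly $3(|V(N)|-|X|)$; as there are $|V(N)|-|X|$ of them and each has degree at least $3$, each has degree exactly $3$. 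Thus $M$ is binary, so $M\in\cN_k$ and $\cN_k$ is leaf-recognizable.

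The one point that genuinely needs care is that a leaf-reconstruction $M$ is a priori just ``a network on $X$'' with the prescribed vertex set and deck, and is not assumed binary to begin with; it is the edge count supplied by the Proposition together with the absence of degree-$2$ vertices in (pseudo-)networks that pins down the degree sequence of $M$ (without the latter, a degree-$2$ vertex could be compensated by a degree-$4$ vertex). Everything else is bookkeeping, so the corollary is indeed the direct consequence of the Proposition that it is stated to be.
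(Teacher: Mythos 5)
Your proof is correct and follows the route the paper intends: the paper offers no argument beyond calling the corollary ``a direct consequence'' of the preceding proposition, and your write-up simply makes that deduction explicit. The degree-sum argument pinning down binarity of the reconstruction (using $|E(M)|=|E(N)|$ from the proposition together with the absence of degree-$2$ vertices) is exactly the one nontrivial point the paper leaves unstated, and you handle it correctly.
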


\section{Decomposable networks}\label{sec:decomp}

In this section we will consider decomposable networks, that is, 
networks with at least one nontrivial cut-edge (that is, a cut-edge which
does not contain a leaf). We start with a few simple observations. Note that, for~$|X| \leq 3$, there exists a unique phylogenetic tree on~$X$ which is therefore $X$-reconstructible. For~$|X|=4$, no binary phylogenetic tree on~$X$ is $X$-reconstructible, but all phylogenetic trees~$T$ on~$X$ are $V(T)$-reconstructible.

\begin{theorem}\label{thm:trees}
\leo{Any phylogenetic tree with at least five leaves is leaf-}reconstructible.
\end{theorem}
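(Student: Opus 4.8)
The plan is to reconstruct a phylogenetic tree $T$ on $X$ with $|X|\geq 5$ from its $X$-deck $\{T_x \mid x\in X\}$ by recovering the full split system of $T$. Since a phylogenetic tree is uniquely determined by its set of splits (this is essentially the statement that $T=T(T)$, and trees are encoded by their splits, cf.\ \cite{SS}), it suffices to show that the collection of splits of $T$ can be read off from the deck. Each $T_x$ is a phylogenetic tree on $X\setminus\{x\}$, and its splits are exactly the splits of $T$ restricted to $X\setminus\{x\}$: formally, for a split $A|B$ of $T$, removing leaf $x$ and suppressing the resulting degree-2 vertex yields the split $(A\setminus\{x\})\,|\,(B\setminus\{x\})$ of $T_x$, provided both sides remain nonempty (if, say, $A=\{x\}$, the split disappears).

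First I would fix a leaf $x\in X$ and another leaf $y\in X\setminus\{x\}$, which exists since $|X|\geq 2$; the role of $y$ is to tell us, for any split $A'|B'$ of $T_x$, which of the two ways of inserting $x$ back gives a split of $T$. Concretely, for each split $A'|B'$ of $T_x$, the two candidate splits of $T$ are $A'\cup\{x\}\,|\,B'$ and $A'\,|\,B'\cup\{x\}$. To decide between them I would look at the deck element $T_y$ (with $y\neq x$): the restriction of the true split to $X\setminus\{y\}$ is a split of $T_y$, and as long as this restriction is nontrivial it pins down on which side $x$ lies. The remaining work is to handle the splits of $T$ that become trivial or vanish in $T_x$ and in $T_y$ — but these are few: a split $A|B$ can be ``lost'' in $T_x$ only when $\min(|A|,|B|)\leq 2$ and one of the small sides contains $x$. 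Since $|X|\geq 5$, I can choose the two reference leaves $x,y$ and, if necessary, vary the choice of which leaf is deleted, so that every nontrivial split of $T$ survives (as a nontrivial split) in at least one deck element where we also know enough to locate the missing leaf. Assembling: take the union over all $z\in X$ of the splits of $T_z$, lift each back to a split on $X$ using the above cross-referencing, add the $|X|$ trivial splits $\{x\}\,|\,X\setminus\{x\}$, and check this candidate split system is compatible and pairwise-distinct; it then determines a unique tree, which must be $T$.

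I expect the main obstacle to be the careful bookkeeping of \emph{small} splits — splits $A|B$ with $|A|=1$ or $|A|=2$ — since these are precisely the ones that can degenerate when a leaf is deleted, and they are also the ones most easily confused between the true tree and a putative different leaf-reconstruction $M$. The argument must show that even these small splits are forced: a cherry $\{a,b\}$ of $T$ (a split $\{a,b\}\,|\,X\setminus\{a,b\}$) shows up as the cherry $\{a,b\}$ in every $T_z$ with $z\notin\{a,b\}$, and there are at least three such $z$ since $|X|\geq 5$, so it is detected; the threshold $|X|\geq 5$ is exactly what rules out the known $4$-leaf counterexample where two different trees (two resolutions of a 4-star) share the same $X$-deck of three cherries-on-three-leaves. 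So the crux is verifying that for $|X|\geq 5$ no two distinct trees can have identical $X$-decks, which reduces to: any split of $T$ is witnessed, with enough side-information to reconstruct it, in the deck — and I would organize this by a short case analysis on $\min(|A|,|B|)\in\{1,2,\geq 3\}$.
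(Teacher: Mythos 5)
Your overall strategy --- recover the split system of $T$ from the deck, using the fact that a phylogenetic tree is determined by its splits --- is in the right spirit, and your diagnosis of why $|X|\geq 5$ matters (the two resolutions of the $4$-star sharing a deck) is correct. But as written the argument has two genuine gaps. First, leaf-reconstructibility requires showing that \emph{every} leaf-reconstruction of $T$ (a priori an arbitrary phylogenetic network with the same $X$-deck) is equivalent to $T$; you only ever compare $T$ against another \emph{tree}. The paper disposes of this by first proving that the reticulation number and level are leaf-reconstructible, so that the class of trees (level-$0$ networks) is leaf-recognizable; some such step is needed before your reduction to ``no two distinct trees have the same $X$-deck'' is available.

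Second, and more seriously, the central cross-referencing step is not actually an argument. Given a split $A'|B'$ of $T_x$, you propose to decide between the two lifts $A'\cup\{x\}\,|\,B'$ and $A'\,|\,B'\cup\{x\}$ by checking which one restricts to a split of $T_y$. But $T_y$ containing the restriction of a candidate does not certify that the candidate is a split of $T$: it only says that \emph{some} split of $T$ restricts to that split of $X\setminus\{y\}$, and that split faces the same two-way ambiguity about where $y$ sits --- so the procedure is circular as stated. (Note also that both lifts can simultaneously be splits of $T$, namely when the neighbour of $x$ lies between the two corresponding cut-edges, so ``deciding between them'' is not quite the right question.) Closing this gap is exactly where the work lies, and the paper does it with a short contradiction argument you should compare with: if $T$ and a reconstruction $T'$ have the same $X$-deck but $T$ has a nontrivial split $A|B$ that $T'$ lacks, take $a_1,a_2,a_3\in A$ (possible since $|X|\geq 5$ forces one side to have at least three elements); from $T'_{a_i}\sim T_{a_i}$ and the absence of $A|B$ in $T'$, the only available lift forces $T'$ to contain both $A\setminus\{a_1\}\,|\,B\cup\{a_1\}$ and $A\setminus\{a_2\}\,|\,B\cup\{a_2\}$, which are incompatible. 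Your outline could likely be completed along similar lines, but an incompatibility argument of this kind has to appear somewhere; it cannot be replaced by the lookup in $T_y$ as you describe it.
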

\begin{proof}
The class of phylogenetic trees is \leo{leaf-}recognizable by Corollary~\ref{cor:recog}. To show weak-reconstructibility, suppose \leo{that there exist phylogenetic trees}~$T\not\sim T'$ \leo{on~$X$} such that~$T$ and~$T'$ have the same $X$-deck. Then there is at least one nontrivial split $A|B$ that is a split of, without loss of generality,~$T$ but not of~$T'$. Since~$|X|\geq 5$, at least one of~$A$ and~$B$ contains at least three elements. The other side contains at least two elements since the split is nontrivial. Assume $a_1,a_2,a_3\in A$ and~$b_1,b_2\in B$. Then~$T_{a_1}$ has split $A\setminus \{a_1\}|B$ and \leo{$T_{a_2}$ has split} $A\setminus \{a_2\}|B$. Hence, $T'_{a_1}$ and $T'_{a_2}$ have the same splits\leo{, respectively}. This implies that~$T'$ has a split that can be obtained from $A\setminus \{a_1\}|B$ by inserting~$a_1$. Since it does not have split~$A|B$, it must have split $A\setminus \{a_1\}|B\cup\{a_1\}$. Similarly,~$T'$ must have the split $A\setminus \{a_2\}|B\cup\{a_2\}$. This leads to a contradiction because these splits are incompatible \leo{(see~e.g.~\cite{SS})}. 
\end{proof}

\begin{remark}
It is known that any tree is reconstructible \cite{kelly57}. A proof of this result is given in \cite[p.232]{bh77},
which uses a generalization of Kelly's Lemma \cite{kelly57}. Kelly's Lemma is key 
to proving several results in graph reconstructibility. We were unable to derive an analogous 
result for leaf-reconstructibility --  it would be interesting to know if some such result exists.
Note also that trees are known to be endvertex\leo{-}reconstructible \cite{HP66}. 
\end{remark}

\leo{To extend Theorem~\ref{thm:trees} to decomposable networks, we will use the following observation.}

\begin{observation}\label{obs:treedeck}
For any phylogenetic network~$N$ on~$X$ and any leaf~$x\in X$ we have
\[
(T(N))_x = T(N_x)
\]
\end{observation}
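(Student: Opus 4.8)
The plan is to verify the identity $(T(N))_x = T(N_x)$ directly from the definitions, by tracking what happens to the blob/cut-edge structure of $N$ when the leaf $x$ is deleted and what happens to the tree $T(N)$ when the same leaf is deleted. Recall that $T(N)$ is obtained from $N$ by contracting each blob to a single vertex and suppressing degree-2 vertices; the leaf $x$ is never absorbed into a blob (a leaf has degree 1), so $x$ survives in $T(N)$, and it is attached to whatever vertex of $T(N)$ corresponds to the blob or cut-vertex nearest to $x$ in $N$. The key structural point is that $x$ lies at the end of a trivial cut-edge $e=\{x,v\}$ of $N$, and the splits of $N$ (equivalently, the cut-edges of $N$ other than those inside blobs) are exactly the splits of $T(N)$.

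First I would split into the two cases already used in the Proposition above: $|V(N)|=2$ and $|V(N)|\ge 3$. The case $|V(N)|=2$ is immediate since then $N$ is a single edge joining two leaves, $T(N)=N$, and both sides of the claimed identity are trivial (a single vertex or the empty/degenerate network, handled by the conventions on networks with $|X|\le 2$). For $|V(N)|\ge 3$, let $e=\{x,v\}$ be the trivial cut-edge at $x$, so $v$ has degree 3 in $N$ (in the binary case; in general degree $\ge 3$, but the argument only uses that $v$ is not a leaf). Deleting $x$ and suppressing $v$ (if $v$ becomes degree 2) produces $N_x$. I would then argue that the blobs of $N_x$ are in natural bijection with the blobs of $N$: deleting a pendant leaf and suppressing the resulting degree-2 vertex does not merge, split, create, or destroy any biconnected component containing $\ge 2$ edges, because $v$ is a cut-vertex of $N$ separating $x$ from everything else, hence $v$ lies in no blob, and the local surgery at $v$ only affects the two cut-edges incident with $v$ (it replaces them by a single cut-edge, or leaves $v$ in place if $v$ had degree $\ge 4$).

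Next I would compute both sides. On the left: $(T(N))_x$ is obtained from the tree $T(N)$ by deleting the leaf $x$ and suppressing the resulting degree-2 vertex; since $T(N)$ is a tree, this is just the standard leaf-deletion, and it changes exactly the vertex of $T(N)$ that $x$ was attached to. On the right: $T(N_x)$ is obtained by contracting the blobs of $N_x$; by the bijection established above these are the same blobs as in $N$, contracted in the same way, and the only difference from $T(N)$ is precisely the removal of the pendant leaf $x$ together with the suppression at $x$'s former neighbour. Comparing the two descriptions, both yield the same phylogenetic tree on $X\setminus\{x\}$. Equivalently, and perhaps more cleanly, I would phrase the whole argument in terms of splits: a nontrivial split $A|B$ of $N$ corresponds to a cut-edge not inside any blob, such cut-edges of $N$ and of $N_x$ are in obvious correspondence via the same local surgery, so $A'|B'$ is a split of $N_x$ iff the corresponding split of $N$ restricts to $A'|B'$ on $X\setminus\{x\}$, and since $T(\cdot)$ is characterised as the unique tree with exactly the splits of the network, the two trees $(T(N))_x$ and $T(N_x)$ have identical split systems and are therefore equal.

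The main obstacle is the bookkeeping around the vertex $v=N$-neighbour of $x$: one must check carefully that suppressing $v$ (when it has degree 2 after deleting $x$) commutes with blob-contraction, and in particular that $v$ never lies on a blob, so that no blob is accidentally altered or destroyed. Once that local commutation is nailed down — essentially the observation that "delete a pendant leaf and suppress" touches only cut-edges, never blob-internal edges — the rest is a routine matching of split systems. I would present the split-system version as the clean proof and relegate the vertex-by-vertex surgery to a sentence of justification.
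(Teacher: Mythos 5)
The paper states this as an Observation and offers no proof at all, so there is nothing to compare against; your instinct to argue via the split systems (using that a phylogenetic tree is determined by its splits) is a perfectly reasonable way to make the claim rigorous. However, as written your argument contains a genuinely false step on which you explicitly rely: you assert, twice, that the neighbour $v$ of $x$ ``lies in no blob'' because it is a cut-vertex separating $x$ from the rest. A cut-vertex can certainly lie in a blob, and the neighbour of a leaf very often does --- in Figure~\ref{fig:prelim} of the paper, the neighbour of leaf $a$ sits on the left $4$-cycle, which is a blob. Consequently your claim that ``the local surgery at $v$ only affects the two cut-edges incident with $v$'' also fails: when $v$ lies in a blob, the two non-pendant edges at $v$ are blob-internal, not cut-edges, and suppressing $v$ genuinely modifies that blob. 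Your concluding remark that one must check ``that $v$ never lies on a blob, so that no blob is accidentally altered or destroyed'' shows the argument is resting on this false premise rather than merely being loosely worded.

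The statement you want is weaker and still true: the surgery may alter a blob but cannot destroy, create, merge or split one. If $v$ lies in a blob $B$, then deleting $x$ and suppressing $v$ removes one vertex and one edge from $B$; since $N$ is a simple graph, every blob of $N$ has at least three edges, so the result still has at least two edges, is still $2$-connected, and contracts to a single vertex exactly as $B$ did. If $v$ lies in no blob, your cut-edge bookkeeping applies. Alternatively, and more cleanly for the split-system version: an edge of a connected graph is a cut-edge if and only if it lies on no cycle, and no cycle of $N$ passes through the leaf $x$; hence every cycle of $N$ survives (possibly with $v$ suppressed) in $N_x$, so no edge that was blob-internal in $N$ can become a cut-edge of $N_x$, and the cut-edges of $N_x$ are precisely the cut-edges of $N$ other than $\{x,v\}$, with the two cut-edges at $v$ merged into one when $v$ is suppressed. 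This yields that the splits of $N_x$ are exactly the restrictions to $X\setminus\{x\}$ of the splits of $N$, which matches what leaf-deletion does to the splits of $T(N)$, and the identity follows. With that repair your proof is correct.
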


\begin{corollary}\label{cor:TN}
The function mapping a phylogenetic network~$N$ with at least five leaves to~$T(N)$ is leaf-reconstructible.
\end{corollary}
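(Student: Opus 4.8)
The plan is to combine Theorem~\ref{thm:trees} with Observation~\ref{obs:treedeck}. Fix a phylogenetic network $N$ on $X$ with $|X|\geq 5$ and let $M$ be any leaf-reconstruction of $N$; we must show $T(M)\sim T(N)$. By definition of leaf-reconstruction, $M_x\sim N_x$ for every $x\in X$, and applying the phylogenetic-tree operator to both sides gives $T(M_x)\sim T(N_x)$ for every $x$. Here I would note that the operator $N\mapsto T(N)$ respects equivalence of pseudo-networks, since a graph isomorphism that is the identity on $X$ carries blobs to blobs and hence induces an isomorphism of the contracted trees; so this step is routine.

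Next I would invoke Observation~\ref{obs:treedeck}, which tells us $T(N_x)=(T(N))_x$ and $T(M_x)=(T(M))_x$. Substituting, we obtain $(T(M))_x\sim (T(N))_x$ for every $x\in X$. In other words, the phylogenetic trees $T(M)$ and $T(N)$ have the same $X$-deck. Since $|X|\geq 5$, Theorem~\ref{thm:trees} applies to $T(N)$: it is leaf-reconstructible, so any phylogenetic tree on $X$ with the same $X$-deck as $T(N)$ must be equivalent to $T(N)$. Applying this with $T(M)$ in the role of the reconstruction yields $T(M)\sim T(N)$, which is exactly the claim.

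The one point that needs a small amount of care — and the closest thing to an obstacle — is checking that Observation~\ref{obs:treedeck} is being applied on the right side of the $|X|$ threshold: $N_x$ is a pseudo-network on $X\setminus\{x\}$, which has at least four leaves, and $T(N)$ is a phylogenetic tree on $X$ with at least five leaves, so Theorem~\ref{thm:trees} is genuinely available for $T(N)$ itself (we do not need it for the smaller trees $T(N_x)$, only for $T(N)$). I should also make sure the definition of leaf-reconstruction requires $V(M)=V(N)$, which is given, but this plays no role here since we only need the deck equality. Beyond that, the corollary is essentially a two-line deduction; no new combinatorial work is required.
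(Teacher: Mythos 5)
Your proposal is correct and follows exactly the paper's route: the paper's proof is the one-line citation ``By Observation~\ref{obs:treedeck} and Theorem~\ref{thm:trees},'' and your argument is simply that deduction written out in full, with the (routine but worth stating) check that $T(\cdot)$ respects equivalence. No differences of substance.
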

\begin{proof}
By Observation~\ref{obs:treedeck} and Theorem~\ref{thm:trees}.
\end{proof}

\begin{theorem}\label{thm:cutedge}
Any decomposable phylogenetic network with at least five leaves is leaf-reconstructible.
\end{theorem}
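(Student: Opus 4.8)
The plan is to exploit Observation~\ref{obs:treedeck} and Corollary~\ref{cor:TN}: since $N$ has at least five leaves, its tree $T(N)$ is leaf-reconstructible, so from the $X$-deck of $N$ we can recover $T(N)$ and in particular the set of all (trivial and nontrivial) splits of $N$. Because $N$ is decomposable, $T(N)$ has at least one internal edge, giving a nontrivial split $A|B$ of $N$. The strategy is to use this split to cut $N$ into two smaller pieces, reconstruct each piece from the deck by a recursive/inductive argument, and then glue them back along the cut-edge; uniqueness of the glued object then gives leaf-reconstructibility via the Observation that leaf-reconstructibility equals leaf-recognizability (Corollary~\ref{cor:recog}) plus weak leaf-reconstructibility.

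First I would set up the decomposition formally. Fix a nontrivial cut-edge $e$ of $N$ inducing the split $A|B$, chosen so that $e$ is, say, "closest to the leaves" on the $A$-side in $T(N)$ (i.e. $A$ is inclusion-minimal among the sides of nontrivial splits), so that the component on the $A$-side, after turning the cut-vertex into a labelled leaf with a fresh label $b^\ast$, is a phylogenetic network $N_A$ on $A\cup\{b^\ast\}$ that is \emph{simple} or small; symmetrically define $N_B$ on $B\cup\{a^\ast\}$. Then $N$ is recovered uniquely from the pair $(N_A,N_B)$ by identifying the pendant edges at $b^\ast$ and $a^\ast$ into the single cut-edge $e$. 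The point is that $N_A$ and $N_B$ each have strictly fewer leaves than $N$ in the nontrivial cases, or are among the small base cases (trees on $\le 4$ leaves, or single blobs) that we handle directly. Next, for a leaf $x\in A$ with $|A|\ge 2$, deleting $x$ from $N$ only affects the $A$-side, so $N_x$ is again decomposable with the same split structure on the $B$-side, and $(N_x)$ restricted to the $A$-side is $(N_A)_x$; hence the $X$-deck of $N$, restricted to those $N_x$ with $x$ on the larger side, determines (by induction on the number of leaves, using that $N_A$ resp.\ $N_B$ has $\ge 5$ leaves, or is a handled base case) the pieces $N_A$ and $N_B$, up to the relabelling $a^\ast\leftrightarrow b^\ast$. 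Since $T(N)$ is already reconstructed, we know which splits must appear, so we can correctly match $N_A$ to $N_B$ along $e$ and rule out spurious gluings.

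The main obstacle I expect is the bookkeeping around \emph{which} side is large enough to induct on and around degenerate configurations: if one side of every nontrivial split is tiny (two or three leaves), then $N_A$ or $N_B$ may have fewer than five leaves, so Theorem~\ref{thm:trees}/the inductive hypothesis does not apply to it directly, and one must instead argue that the tiny pendant piece is forced by a small number of explicit local configurations (a pendant cherry, a pendant edge to a single blob on few leaves, etc.), and that these are unambiguously visible in the deck — this is exactly the kind of case analysis that is easy to get wrong. A secondary subtlety is verifying that a leaf-reconstruction $N'$ of $N$ is again decomposable with a nontrivial split in the "same place": this needs Corollary~\ref{cor:TN} (so $T(N')=T(N)$, hence $N'$ has the same splits, hence the same nontrivial cut-edge $e$) together with the fact that deleting a leaf from the $A$-side cannot destroy the cut-edge $e$ when $|B|\ge 2$. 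Once these points are pinned down, weak leaf-reconstructibility follows by comparing $N$ and $N'$ piece by piece along $e$, and combined with leaf-recognizability (Corollary~\ref{cor:recog}) this yields the theorem.
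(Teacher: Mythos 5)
Your setup is the same as the paper's: recognizability from the deck, recovery of $T(N)$ via Corollary~\ref{cor:TN} to locate the nontrivial split $A|B$, and a cut-and-glue strategy along the corresponding cut-edge. But the way you propose to recover the two pieces --- recursively/inductively reconstructing $N_A$ and $N_B$ from their own decks --- is where the argument breaks down, and it misses the one idea that makes the theorem easy. Since $e$ is nontrivial, both sides contain at least two leaves, so you can pick $a_1\in A$ and observe that deleting $a_1$ leaves the entire $B$-side component \emph{completely untouched}: $N_{a_1}$ still has a unique edge $f$ inducing the split $A\setminus\{a_1\}\,|\,B$, and the component of $N_{a_1}-f$ containing $B$ \emph{is} the $B$-side of $N$ (and, since $N'_{a_1}\sim N_{a_1}$, also the $B$-side of any reconstruction $N'$). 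Symmetrically, a single card $N_{b_1}$ with $b_1\in B$ hands you the intact $A$-side. Gluing these two forced components along the edge between their distinguished endpoints determines $N'$ uniquely. No induction, no recursion, no base cases.

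Your inductive route has two concrete problems that you flag but do not resolve. First, the tiny-side issue is real: if $A$ has two or three leaves, $N_A$ (on $A\cup\{b^\ast\}$) has at most four leaves, so neither Theorem~\ref{thm:trees} nor an inductive hypothesis applies, and indeed four-leaf networks need not be leaf-reconstructible (Figure~\ref{fig:notleafreconstructible}); you would be forced into an open-ended case analysis of pendant configurations. Second, even on a large side the induction does not type-check: what you can extract from the deck of $N$ is only the $A$-deck of $N_A$ --- you never get the card $(N_A)_{b^\ast}$ --- so you would need $N_A$ to be $A$-reconstructible, i.e.\ reconstructible from a \emph{proper subset} of its leaf deck, which is strictly stronger than what the theorem being proved (or Theorem~\ref{thm:trees}) supplies. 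Both difficulties evaporate once you notice that each side is already present, whole, in a single deck element obtained by deleting a leaf from the opposite side.
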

\begin{proof} Let~$\cN$ be the class of phylogenetic networks with at least five leaves and at least one nontrivial cut-edge. This class is leaf-recognizable since a phylogenetic network on~$X$ belongs to this class if and only if every element of its $X$-deck has four leaves and at most two elements of its \leo{$X$-}deck have no nontrivial cut-edges.

It remains to show weak leaf-reconstructibility. Suppose~$|X|\geq 5$ and let~$N$ be a phylogenetic network on~$X$ with some nontrivial cut-edge~$e$. Let~$A|B$ be the split induced by~$e$. By Corollary~\ref{cor:TN}, $T(N)$ is $X$-reconstructible. Hence, any reconstruction~$N'$ of~$N$ contains a unique edge~$e'$ representing split~$A|B$. Since~$e$ is nontrivial, there exist leaves~$a_1,a_2\in A$ and~$b_1,b_2\in B$. \leo{Pseudo-}network~$N_{a_1}$ contains a \leo{unique} edge~$f$ inducing split $A\setminus\{a_1\}|B$. \leo{Since $N_{a_1}\sim N'_{a_1}$}, the connected component of $N_{a_1} - f$ containing~$B$ is \leo{equivalent} to the connected component of $N'-e'$ containing~$B$. Call this connected component~$N_B$ and let~$u$ be the endpoint of~$f$ that it contains. Similarly, \leo{pseudo-}network~$N_{b_1}$ contains a \leo{unique} edge~$g$ inducing split $A|B\setminus\{b_1\}$ and the connected component of~$N_{b_1}-g$ containing~$A$ is \leo{equivalent} to the connected component of~$N'-e'$ containing~$A$. Call this connected component~$N_A$ and let~$v$ be the endoint of~$g$ that it contains. Then,~$N'$ can be obtained from~$N_A$ and~$N_B$ by adding an edge between~$u$ and~$v$. Therefore,~$N'\sim N$.
\end{proof}


\section{Simple networks}\label{sec:simple}

When considering leaf-reconstructability of binary networks we can, by Theorem~\ref{thm:cutedge}, restrict to simple networks, which are binary networks containing precisely one blob.
Therefore, in this section we focus on leaf-reconstructibility of simple binary networks. \leo{The class of such networks is clearly leaf-recognizable since a phylogenetic network on~$X$ is contained in this class if and only if each element of its~$X$-deck is binary and has precisely one blob.}

We say that $(x,y,z)$ is a \emph{3-chain} of a phylogenetic network~$N$ on~$X$ if~$x,y,z\in X$ and~$N$ contains a path~$(u,v,w)$ such that~$x,y$ and~$z$ are respectively a neighbour of~$u,v$ and~$w$.


\begin{lemma}\label{lem:3chain}
Any simple binary level-$k$ phylogenetic network \leo{containing a 3-chain is leaf-reconstructible if it has at least~4 leaves and at least~5 leaves if~$k=1$}.
\end{lemma}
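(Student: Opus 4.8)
The plan is to prove Lemma~\ref{lem:3chain} by showing that the class of simple binary level-$k$ networks containing a 3-chain is weakly leaf-reconstructible, since leaf-recognizability of simple binary networks is already established (and the existence of a 3-chain, as we will argue, can be detected from the deck under the stated leaf-count hypotheses). So suppose $N$ is such a network, with a 3-chain $(x,y,z)$ witnessed by a path $(u,v,w)$ where $x,y,z$ are neighbours of $u,v,w$ respectively. Let $N'$ be any leaf-reconstruction of $N$; I want to conclude $N'\sim N$.

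The key idea is to use the element $N_y$ of the deck together with the knowledge of where $y$ must be reattached. In $N_y$, deleting $y$ and suppressing $v$ merges the two edges $uv$ and $vw$ into a single edge $f$; this edge $f$ is recognizable in $N_y$ because $x$ is a neighbour of one of its endpoints ($u$) and $z$ is a neighbour of the other ($w$). First I would argue that $N$ is obtained from $N_y$ by subdividing precisely this edge $f$ and hanging $y$ off the new vertex. Then, since $N'_y\sim N_y$, the reconstruction $N'$ is also obtained from $N_y$ by subdividing some edge $f'$ and attaching $y$. The whole proof reduces to showing $f'=f$ (up to the isomorphism $N'_y\sim N_y$), i.e. that the ``re-attachment edge'' is forced. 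To pin it down I would bring in a second deck element, say $N_x$: in $N$, after deleting $x$ and suppressing $u$, the vertex $u$ disappears, so $v$ (with its pendant $y$) becomes identifiable as ``the neighbour of $y$'' and the path structure around it is constrained; comparing $N_x$ with $N'_x$ should force $y$ to sit between the images of $u$ and $w$ in $N'$ as well. Combining the constraints from $N_y$ (which fixes the whole network minus the location of $y$ up to subdividing $f$) and from $N_x$ (which fixes the location of $y$ relative to $v$'s other two neighbours) yields $N'\sim N$.

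For the recognizability part, I would note that whether $N$ has a 3-chain can be read off the deck: removing an appropriate leaf and examining the resulting pseudo-network reveals the path structure, and the leaf-count hypotheses ($\ge 4$ leaves in general, $\ge 5$ if $k=1$) are exactly what is needed so that the deck is large enough and the degenerate small cases (where, e.g., a level-1 network on few leaves coincides with another) are excluded. I expect the level-$1$ exception to come from the smallest triangle-based networks, where a 4-leaf simple level-1 network can be confused with its mirror image or with a different attachment, mirroring the phenomenon for 4-leaf trees noted after Theorem~\ref{thm:trees}.

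The main obstacle I anticipate is proving that the re-attachment edge $f$ is uniquely determined, i.e. that one cannot have $N\not\sim N'$ with $N'$ obtained by subdividing a \emph{different} edge $f'$ of $N_y$ and attaching $y$. In principle $N_y$ could have nontrivial automorphisms, or two different edges of $N_y$ could yield leaf-reconstructions with identical decks; ruling this out is where the 3-chain hypothesis must do real work — the neighbours $x$ and $z$ break symmetry and ``anchor'' the edge $f$, and the second deck element $N_x$ (or $N_z$) supplies the extra constraint that no competing edge survives. Handling the bookkeeping of how suppressions interact when $u$, $v$, $w$ are close to the blob (or when two of $x,y,z$ are adjacent to the same blob vertex) will require a short case analysis, but the core argument is the ``anchor and cross-check'' scheme just described.
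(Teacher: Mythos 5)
Your opening move matches the paper's: in $N_y$ the neighbours of $x$ and $z$ are joined by a unique edge $e$, any reconstruction $N'$ arises from $N_y$ by attaching $y$ to some edge, and everything reduces to forcing that edge to be $e$. The gap is in your choice of cross-check. Using $N_x$ (or $N_z$) as the second deck element does not work, because deleting $x$ destroys exactly the part of the 3-chain that distinguishes the correct attachment from a wrong one. Concretely, let $N$ be the simple binary level-2 network whose underlying generator consists of two vertices $p,q$ joined by three edges, with leaves $x,y,z$ in this order (from $p$ to $q$) on the first edge, $a$ on the second and $b$ on the third; let $N'$ be the same network except that the leaves on the first edge appear in the order $y,x,z$. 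Then $N\not\sim N'$, yet $N'_x\sim N_x$ (both carry $y,z$ in order on the first edge) and $N'_y\sim N_y$ (both carry $x,z$). So $N$ is not $\{x,y\}$-reconstructible, and no amount of bookkeeping with these two deck elements can force $y$ onto $e$; your anticipated obstacle is a real one and $N_x$ does not overcome it.

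The missing idea is to take the second deck element to be $N_a$ for a leaf $a\notin\{x,y,z\}$, which exists since $|X|\geq 4$. Such an $N_a$ still contains the entire 3-chain $(x,y,z)$; combined with the observation that $N_y$ contains no 3-chain $(x,a,z)$ (this is precisely where the hypothesis $|X|\geq 5$ when $k=1$ enters, ruling out the 4-leaf cycle), one concludes that $(x,y,z)$ is a 3-chain of $N'$ itself, which forces $y$ onto the edge $e$ and gives $N'\sim N$. Separately, your recognizability argument is only gestured at; the criterion the paper uses is that $N$ belongs to the class if and only if at most three elements of its $X$-deck fail to contain a 3-chain, and some such explicit statement is needed because a deck element can contain a 3-chain that $N$ does not.
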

\begin{proof}
\leo{The class~$\cN$ of such networks is leaf-recognizable since a simple binary level-$k$ phylogenetic network \leo{on~$X$}, with~$|X|\geq 4$ and~$|X|\geq 5$ if~$k=1$, is contained in~$\cN$ if and only if at most three elements of its $X$-deck do not contain a 3-chain.}

To show weak leaf-reconstructibility, let~$N\in\cN$ \leo{be a phylogenetic network on~$X$} and let~$(x,y,z)$ be a 3-chain in~$N$. Since~$|X|\geq 4$, there exists at least one other leaf~$a\in X$. Consider~$N_y$ and~$N_a$. First observe that~$N_a$ contains a 3-chain~$(x,y,z)$.
In~$N_y$, there is a unique edge~$e$ between the neighbours of~$x$ and~$z$. Moreover, in~$N_y$ there is no 3-chain $(x,a,z)$ by the assumption that~$|X|\geq 5$ if~$k=1$.
Let~$N'\in\cN$ be a $\{y,a\}$-reconstruction of~$N$. Then~$N'$ contains a 3-chain $(x,y,z)$ since $N_a$ contains a 3-chain~$(x,y,z)$ and~$N_y$ does not contain a 3-chain $(x,a,z)$. Hence, $N'$ can be reconstructed from~$N_y$ by \leo{attaching~$y$ to edge~$e$}. Therefore,~$N'\sim N$.
\end{proof}

\begin{corollary}\label{cor:levk}
Any simple binary level-$k$ phylogenetic network \leo{with at least $6k-5$ leaves and~$k\geq 2$ is leaf-reconstructible.}
\end{corollary}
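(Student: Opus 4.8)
The plan is to deduce this from Lemma~\ref{lem:3chain}: I will show that every simple binary level-$k$ phylogenetic network~$N$ on~$X$ with $|X|\geq 6k-5$ and $k\geq 2$ necessarily contains a 3-chain, and then invoke Lemma~\ref{lem:3chain}. Note that such an~$N$ has $|X|\geq 6k-5\geq 7$ leaves, so the leaf-number hypotheses of Lemma~\ref{lem:3chain} are automatically satisfied; and since $k\geq 2$, the special case ``$k=1$'' of that lemma never applies, whatever the precise level of~$N$ happens to be. So the whole content is the combinatorial claim that having sufficiently many leaves forces a 3-chain.

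First I would set up the structure. Let~$B$ be the unique blob of~$N$ and let~$r\leq k$ be its reticulation number. Every vertex of~$B$ has degree~$3$ in~$N$ and degree~$2$ or~$3$ in~$B$ itself; a vertex of degree~$2$ in~$B$ carries exactly one pendant leaf, and (as $|X|\geq 7$) these pendant-carrying vertices are in bijection with~$X$, while the remaining vertices carry no leaf and have degree~$3$ in~$B$. A handshake count then gives that~$B$ has exactly~$|X|$ pendant-carrying vertices and exactly~$2(r-1)$ other vertices. Suppressing all the degree-$2$ vertices of~$B$ yields a cubic multigraph~$B'$ on the~$2(r-1)$ leafless vertices, hence with~$3(r-1)$ edges; each edge of~$B'$ corresponds to a path (possibly a loop) in~$B$ whose interior vertices are precisely the pendant-carrying vertices lying along it, and every pendant-carrying vertex is interior to exactly one such path.

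The key step is the observation that~$N$ contains a 3-chain if and only if some edge of~$B'$ has at least three interior vertices. Indeed, a 3-chain~$(x,y,z)$ of~$N$ is witnessed by a path~$(u,v,w)$ in~$N$ with a leaf attached to each of~$u,v,w$; thus~$u,v,w$ are pendant-carrying blob vertices, the edges~$uv$ and~$vw$ lie in~$B$, and hence~$u,v,w$ are three consecutive interior vertices of a single edge of~$B'$; conversely, three consecutive interior vertices of an edge of~$B'$, together with their pendant leaves, form a 3-chain. Now if~$r=1$ then~$B'$ is empty and~$B$ is a single cycle through all~$|X|\geq 3$ pendant-carrying vertices, which obviously contains a 3-chain. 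If~$r\geq 2$ and~$N$ had no 3-chain, then each of the~$3(r-1)$ edges of~$B'$ would have at most two interior vertices, giving $|X|\leq 2\cdot 3(r-1)=6r-6\leq 6k-6<6k-5$, a contradiction. So~$N$ always contains a 3-chain and Lemma~\ref{lem:3chain} completes the proof.

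The main obstacle I anticipate is not conceptual but a matter of careful bookkeeping: getting the handshake count for~$B'$ exactly right (so that the threshold comes out as~$6k-5$ rather than an adjacent value, which one can check is tight by subdividing each strand of a theta-graph blob twice), and dealing cleanly with the degenerate possibilities --- the case $r=1$, loops and parallel edges in~$B'$, and confirming that in a simple binary network the pendant-carrying blob vertices really do biject with~$X$.
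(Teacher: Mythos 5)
Your proof is correct and follows essentially the same route as the paper: count the edges of the underlying cubic generator ($3(k-1)$ of them), observe that the absence of a 3-chain allows at most two leaves per generator edge, derive the contradiction $|X|\leq 6k-6$, and apply Lemma~\ref{lem:3chain}. Your version is in fact slightly more careful than the paper's, since you handle the cases where the blob's reticulation number $r$ is strictly less than $k$ (including $r=1$), whereas the paper implicitly assumes $r=k$.
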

\begin{proof}
\leo{Leaf-recognizability is clear.} Let~$N$ be a simple binary level-$k$ phylogenetic network on~$X$ with~$k\geq 2 $ and~$|X|\geq 6k-5$. Deleting all leaves from~$N$ and suppressing all degree-2 vertices gives a 3-regular multigraph~$G$. Since~$N$ is simple level-$k$, $|E(N)|-|V(N)|+1=k$ and hence $|E(G)|-|V(G)|+1=k$. Combining this with the fact that, since~$G$ is 3-regular, $3|V(G)|=2|E(G)|$ gives that $|E(G)|=3k-3$. Suppose that~$N$ contains no 3-chain. Then it could have at most two leaves per edge of~$G$, implying that $|X|\leq 6k-6$. Hence, $N$ contains a 3-chain and is therefore $X$-reconstructible by Lemma~\ref{lem:3chain}. 
\end{proof}

\begin{corollary}\label{cor:manyleaves}
Any binary phylogenetic network~$N=(V,E)$ on~$X$ with~$|X|\geq \min\{6(|E|-|V|)+1,5\}$ is \leo{leaf-}reconstructible.
\end{corollary}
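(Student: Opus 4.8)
The plan is to deduce the statement from the three reconstructibility results already established --- Theorem~\ref{thm:cutedge} for decomposable networks, Corollary~\ref{cor:levk} for simple level-$k$ networks with $k\geq 2$, and Lemma~\ref{lem:3chain} for the remaining simple networks --- by a short case analysis on the structure of $N$. First I would record that the hypothesis forces $|X|\geq 5$, since $5$ is one of the two quantities being compared; consequently $N$ is not a star tree, and if $N$ happens to be a tree then it has an internal edge and is hence decomposable.

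The two cases are whether or not $N$ has a nontrivial cut-edge. If it does, then $N$ is a decomposable network with at least five leaves, and Theorem~\ref{thm:cutedge} applies directly. If it does not, then since $N$ is binary with $|X|\geq 5$ it is not a star tree, so by the characterisation of simple binary networks in Section~\ref{sec:preliminaries} it is simple, with a single blob; moreover its level $k$ equals its reticulation number $|E|-|V|+1\geq 1$, because the pendant leaves attached to the blob contribute equally to $|E|$ and to $|V|$ and lie in no blob. If $k\geq 2$, then $6(|E|-|V|)+1=6k-5$, so the leaf bound in the hypothesis gives $|X|\geq 6k-5$ and Corollary~\ref{cor:levk} finishes this subcase. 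If $k=1$, the unique blob of $N$ is a cycle (a $2$-connected graph with $|E|=|V|$), and since $N$ has no nontrivial cut-edge every vertex off the cycle is a leaf adjacent to a cycle vertex; as $|X|\geq 5$, three consecutive cycle vertices together with their pendant leaves form a $3$-chain, so Lemma~\ref{lem:3chain} (in its $k=1$ case, which is exactly why $|X|\geq 5$ is needed here) gives leaf-reconstructibility.

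I expect the only non-routine step to be the level-$1$ subcase, which Corollary~\ref{cor:levk} explicitly excludes: here one has to identify the shape of a simple binary level-$1$ network --- a cycle with exactly one pendant leaf at each vertex --- in order to see that it always contains a $3$-chain once it has at least three leaves. Everything else is a matter of matching the hypothesis on $|X|$ against the leaf-count thresholds of Theorem~\ref{thm:cutedge}, Corollary~\ref{cor:levk} and Lemma~\ref{lem:3chain}.
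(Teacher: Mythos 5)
Your proof is correct and follows essentially the same route as the paper's: split on whether $N$ has a nontrivial cut-edge, then invoke Theorem~\ref{thm:cutedge}, Lemma~\ref{lem:3chain} (level-$1$ case), or Corollary~\ref{cor:levk} ($k\geq 2$) accordingly. You additionally spell out why a simple binary level-$1$ network with at least five leaves must contain a $3$-chain, a detail the paper leaves implicit when it simply says ``apply Lemma~\ref{lem:3chain}''.
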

\begin{proof}
If~$N$ contains a nontrivial cut-edge, then apply Theorem~\ref{thm:cutedge}. If it is simple level-$1$, then apply Lemma~\ref{lem:3chain}. If it is simple level-$k$ with~$k\geq 2$ then $|E|-|V|+1=k$ and hence $|X|\geq 6k-5$ and therefore we can apply Corollary~\ref{cor:levk}.
\end{proof}

We say that \emph{almost all} phylogenetic networks from a certain class~$\cN$ are leaf-recon\-structible, if the probability that a network drawn uniformly at random out of all networks in~$\cN$ with~$n$ leaves is leaf-reconstructible goes to~1 when~$n$ goes to infinity.

\begin{corollary}\label{cor:almostall}
For any fixed~$k$, almost all \leo{binary} level-$k$ phylogenetic networks are leaf-reconstructible.
\end{corollary}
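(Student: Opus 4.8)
The plan is to observe that, once $k$ is fixed, the preceding results already force leaf-reconstructibility for \emph{all} binary level-$k$ networks with more than a constant number of leaves, so the probabilistic statement follows immediately (indeed a stronger statement does).

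First I would fix $k$ and set $n_0(k)=\max\{5,\,6k-5\}$. Let $N$ be a binary level-$k$ phylogenetic network on $X$ with $|X|=n\ge n_0(k)$. Since $n\ge 4$, the network $N$ is not a star tree, hence it is either decomposable or simple. If $N$ is decomposable, then as $n\ge 5$ it is leaf-reconstructible by Theorem~\ref{thm:cutedge}. If $N$ is simple, it has a single blob, so its reticulation number $|E(N)|-|V(N)|+1$ equals its level, say $k'\le k$; then $N$ is leaf-reconstructible by Theorem~\ref{thm:trees} if $k'=0$, by Lemma~\ref{lem:3chain} if $k'=1$ (using $n\ge 5$), and by Corollary~\ref{cor:levk} if $2\le k'\le k$ (using $n\ge 6k-5\ge 6k'-5$). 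Alternatively one may invoke Corollary~\ref{cor:manyleaves} directly. In every case, each binary level-$k$ network with at least $n_0(k)$ leaves is leaf-reconstructible.

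Consequently, for every $n\ge n_0(k)$ the probability that a binary level-$k$ phylogenetic network drawn uniformly at random from all such networks with $n$ leaves is leaf-reconstructible equals $1$, so this probability tends to $1$ as $n\to\infty$, as required. I do not expect a genuine obstacle: the only point requiring care is the case split into decomposable versus simple networks (using that a binary network with at least four leaves cannot be a star tree) and, in the simple case, matching the level to the appropriate one of Theorem~\ref{thm:trees}, Lemma~\ref{lem:3chain} and Corollary~\ref{cor:levk}. It is worth remarking that the argument in fact gives more than ``almost all'': for each fixed $k$, \emph{all but finitely many} binary level-$k$ phylogenetic networks are leaf-reconstructible.
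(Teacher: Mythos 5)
Your proposal is correct and follows essentially the same route as the paper: decomposable networks with at least five leaves are handled by Theorem~\ref{thm:cutedge}, and simple level-$k'$ networks with sufficiently many leaves by Lemma~\ref{lem:3chain} and Corollary~\ref{cor:levk}, so that for all large enough $n$ every such network is leaf-reconstructible and the probability is eventually exactly $1$. Your explicit threshold $n_0(k)=\max\{5,6k-5\}$ is in fact the correct reading of the paper's condition (the paper writes $\min\{6k-5,5\}$, which appears to be a typo), and the only step the paper includes that you omit is the identity $|V|=2k-2+2|X|$, which is not needed once ``almost all'' is parametrized by the number of leaves.
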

\begin{proof}
All networks with at least five leaves and some nontrivial cut-edge are leaf-reconstructible by Theorem~\ref{thm:cutedge}. For a simple \leo{binary} level-$k$ phylogenetic network~$N=(V,E)$ on~$X$, with~$k\geq 1$ we have (similar to in the proof of Corollary~\ref{cor:levk})
\[
|V| =  2k-2 + 2|X|.
\]
Hence, when $|V|\rightarrow \infty$ then $|X|\rightarrow \infty$. When $|X|\geq \min\{6k-5,5\}$ then~$N$ is $X$-reconstructible by Lemma~\ref{lem:3chain} and Corollary~\ref{cor:levk}. The corollary follows.
\end{proof}

\section{Reconstruction numbers of decomposable networks}\label{sec:number}

In this section, we shall show that \leo{the} reconstruction number of a decomposable phylogenetic network
with at least five leaves is \leo{at most} two.

\begin{observation}\label{obs:levnum}
\leo{Let~$k\geq 0$.} To recognize that a phylogenetic network~$N$ is level-$k$ it suffices to check that any element of its~$X$-deck is level-$k$.
\end{observation}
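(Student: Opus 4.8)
The plan is to show that the level of a phylogenetic network is ``witnessed'' by its $X$-deck, in the sense that a network is level-$k$ if and only if every element of its $X$-deck is level-$k$. One direction is immediate: by the Proposition, the level is leaf-reconstructible, so if $N$ is level-$k$ then every $N_x$ is level-$k$. The content is the converse: if every $N_x$ in the $X$-deck is level-$k$, then $N$ itself is level-$k$. First I would dispose of the trivial cases $|V(N)|\le 2$ (so $N$ is a single edge with two leaves, level $0\le k$) and more generally the case where $N$ has at most one biconnected component that is a blob, where the claim is easy to check directly. So assume $N$ has a blob $C$ of reticulation number $r(C)=\mathrm{level}(N)$, and suppose for contradiction that $r(C)\ge k+1$.

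The key step is to find a single leaf $x$ whose deletion does not decrease the reticulation number of the blob $C$. Deleting $x$ and suppressing the resulting degree-2 vertex changes $|E|$ and $|V|$ by $-2$ each (when $|V(N)|\ge 3$), so globally the reticulation number $|E(N)|-|V(N)|+1$ is unchanged; the only way the \emph{level} can drop is if the operation ``splits'' the blob $C$ or moves structure out of it. Concretely, $N_x$ restricted to the vertices coming from $C$ is obtained from $C$ by deleting the (at most one) attachment point of the pendant edge to $x$ if that attachment lies on $C$, together with a suppression. Since $|X|\ge 5$ and there are at most finitely many pendant edges, and since a blob $C$ with $r(C)\ge 2$ is 2-connected with at least $2r(C)-1\ge 3$ vertices, I would argue that there is always a leaf $x$ whose pendant edge either attaches outside $C$, in which case $C$ survives intact as a blob of $N_x$, or attaches to a vertex of $C$ whose removal-and-suppression leaves a 2-connected subgraph of reticulation number still at least $r(C)-$ (small loss), and then by a counting/case argument at least one such $x$ preserves $r(C)$ exactly. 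The cleanest version: pick $x$ so that its pendant edge is attached to a cut-edge of $N$ (possible when $N$ is decomposable) or, when $N$ is simple, use that a 3-regular blob with $\ge 3$ pendant edges must have some pendant edge $e$ such that contracting $e$'s attachment does not disconnect $C$, since a 3-connected-enough blob cannot have every attachment vertex be a cut-vertex of $C\cup(\text{pendant structure})$.

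The main obstacle I anticipate is precisely handling the case where removing a leaf can lower the level not by changing $|E|-|V|$ globally but by turning the single big blob into two smaller blobs joined by a new cut-edge, so that the level of $N_x$ is the max of two smaller reticulation numbers. I would rule this out by observing that such splitting requires the suppressed degree-2 vertex to have been a cut-vertex of the blob $C$ in $N-x$, which for a 2-connected $C$ can only happen when $x$'s pendant edge was attached at a vertex $u$ of $C$ such that $C-u$ is disconnected --- impossible, since $C$ is 2-connected and hence $C-u$ is connected for every $u\in V(C)$. Hence for \emph{every} leaf $x$ whose pendant edge attaches to $C$, the blob $C$ persists (with its reticulation number intact) as a blob of $N_x$, so $\mathrm{level}(N_x)\ge r(C)\ge k+1$, contradicting the hypothesis that every deck element is level-$k$. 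Finally I would note the edge case $|X|=5$ and small blobs separately if the 2-connectivity argument needs $r(C)\ge 2$; for $r(C)=k+1$ with $k\ge 1$ this holds, and the case $k=0$ (reconstructing that $N$ is a tree) is covered since a non-tree has a blob of reticulation number $\ge1$ which persists under leaf-deletion by the same argument. This completes the proof.
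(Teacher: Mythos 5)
Your proposal is correct in substance, but note that the paper offers no proof of this Observation at all: it is treated as immediate from the proof of the preceding Proposition, which already records that for $|V(N)|\geq 3$ the level of $N_x$ equals the level of $N$ (and that both are $0$ when $|V(N)|=2$). Your closing paragraph supplies exactly the justification the paper leaves implicit: the pendant edge to a leaf lies in no blob, and if the neighbour $u$ of $x$ lies in a blob $C$ then $u$ has exactly two neighbours $v,w$ in $C$, so deleting $x$ and suppressing $u$ replaces $C$ by $(C-u)+vw$, which is still $2$-connected with $|E|$ and $|V|$ each reduced by one; hence every blob of $N$ survives in $N_x$ with its reticulation number unchanged, and $\mathrm{level}(N_x)=\mathrm{level}(N)$ for \emph{every} $x$. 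This is in fact what is needed, since ``any element'' in the statement means that a single arbitrary deck element suffices (see how the Observation is used to get $\{x\}$-recognizability in Lemma~\ref{lem:recnumbintrees}), so your framing of the converse as ``if every $N_x$ is level-$k$'' slightly undersells what your own argument proves. Two further quibbles: the middle paragraph, which hunts for a specially chosen leaf and drags in $|X|\geq 5$, attachment to cut-edges, and a counting argument, is unnecessary and is entirely superseded by the uniform argument of your final paragraph; and the appeal to the Proposition for the forward direction should be to its \emph{proof} rather than its statement (the Proposition as stated concerns leaf-reconstructions, and is restricted to binary networks, whereas the Observation is also applied to nonbinary trees --- your blob argument, happily, covers the nonbinary case without change).
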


We start by determining the reconstruction number of binary trees.

The \emph{median} of three leaves~$x,y,z\in L(T)$ in a phylogenetic tree~$T$ is the unique vertex that lies on each of the paths between \leo{all} pairs of leaves in~$\{x,y,z\}$.

\begin{lemma}\label{lem:recnumbintrees}
Any binary phylogenetic tree~$T$ \leo{with at least five leaves} has \leo{leaf-}recon\-struction number~2.
\end{lemma}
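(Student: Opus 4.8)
The plan is to establish the two ingredients of leaf-reconstructibility separately, using the Observation that leaf-reconstructibility equals leaf-recognizability plus weak leaf-reconstructibility. Since Theorem~\ref{thm:trees} already tells us the class of phylogenetic trees with at least five leaves is leaf-reconstructible (in particular leaf-recognizable, via Corollary~\ref{cor:recog}), and we can restrict further to \emph{binary} trees, the task reduces to showing that for a binary phylogenetic tree~$T$ on~$X$ with~$|X|\geq 5$ there exists a set~$U\subseteq X$ of size~$2$ such that~$T$ is~$U$-reconstructible, i.e. every reconstruction of~$T$ that agrees with~$T$ on both~$T_{x}$ and~$T_{y}$ for the chosen pair~$x,y$ is equivalent to~$T$. (We also need that one element does not suffice in general, so the reconstruction number is exactly~$2$ rather than~$1$; this is the easy direction and follows by exhibiting a small family of binary trees sharing a single card, or by noting the analogous failure for~$|X|=4$ persists when one leaf has many ``siblings''.)

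For the main direction I would argue via splits, mirroring the proof of Theorem~\ref{thm:trees}. A binary phylogenetic tree is determined by its set of (nontrivial) splits, and the splits of~$T_x$ are obtained from those of~$T$ by deleting~$x$ from the relevant block and discarding any split that becomes trivial. The idea is to pick two leaves~$x,y$ cleverly so that knowing~$T_x$ and~$T_y$ pins down every split of~$T$. Concretely, I would choose~$x$ and~$y$ to be two leaves that are \emph{not} adjacent to a common vertex if possible — more precisely, leaves lying in ``different parts'' of the tree — so that every nontrivial split~$A|B$ of~$T$ has at least two elements on each side after removing whichever of~$x,y$ lies in the larger block. Given a reconstruction~$T'$ with~$T'_x\sim T_x$ and~$T'_y\sim T_y$: for each split~$A|B$ of~$T$, if $|A\setminus\{x\}|,|B\setminus\{x\}|\geq 2$ then~$A\setminus\{x\}|B$ (or~$A|B\setminus\{x\}$) is a nontrivial split of~$T_x=T'_x$, and the same analysis as in Theorem~\ref{thm:trees} — comparing the two possible reinsertions of~$x$ and invoking incompatibility of the resulting splits — forces~$A|B$ to be a split of~$T'$. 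The role of the second leaf~$y$ is to handle the ``small'' splits that become trivial in~$T_x$, namely those where~$x$ is one of only two leaves on a side; picking~$y$ far from~$x$ guarantees such a split survives nontrivially in~$T_y$. So together~$T_x$ and~$T_y$ recover all splits of~$T$, hence~$T'\sim T$.

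The main obstacle is the case analysis needed to guarantee that a suitable pair~$\{x,y\}$ always exists and covers \emph{all} splits — in particular handling binary trees with lots of ``cherries'' (pairs of leaves sharing a neighbour) or long pendant chains, where for many choices of the deleted leaf a whole family of small splits collapses simultaneously. I expect the cleanest route is: let~$x$ be any leaf whose neighbour~$u$ has a neighbour~$w\neq x$ that is itself not a leaf (such an~$x$ exists unless~$T$ is a star, which is excluded for~$|X|\geq 5$ binary since a star is not binary, or near-star configurations which must be checked by hand), and let~$y$ be a leaf on the ``other side'' of some central edge. Then every split nontrivial in~$T$ is nontrivial in at least one of~$T_x,T_y$, and the incompatibility argument applies. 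I would organize the write-up as: (1) reduce to weak~$\{x,y\}$-reconstructibility via Corollary~\ref{cor:recog} and Theorem~\ref{thm:trees}; (2) specify the choice of~$x$ and~$y$ and verify it is always possible; (3) run the split-reinsertion/incompatibility argument to show all splits of~$T$ are forced; (4) remark briefly why a single card does not suffice, giving recon\-struction number exactly~$2$.
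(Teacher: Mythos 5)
Your reduction to weak $\{x,y\}$-reconstructibility and your lower-bound remark are fine, but the core of your argument---running the split-incompatibility analysis of Theorem~\ref{thm:trees} on two fixed cards $T_x$ and $T_y$---has a genuine gap. In Theorem~\ref{thm:trees} the two cards are chosen \emph{after} the disputed split $A|B$ is fixed: one takes $a_1,a_2$ both in the larger block $A$, and the two forced splits $A\setminus\{a_1\}\,|\,B\cup\{a_1\}$ and $A\setminus\{a_2\}\,|\,B\cup\{a_2\}$ are then incompatible. With a pre-chosen pair $\{x,y\}$ this only works for disputed splits having $x$ and $y$ on the same side (of size at least $3$). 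If $A|B$ separates $x$ from $y$, say $x\in A$ and $y\in B$, then the two splits your argument forces on $T'$, namely $A\setminus\{x\}\,|\,B\cup\{x\}$ and $A\cup\{y\}\,|\,B\setminus\{y\}$, are \emph{compatible} (the blocks $A\setminus\{x\}$ and $B\setminus\{y\}$ are disjoint), so no contradiction arises. This is not a corner case: in a binary tree the only pairs of leaves not separated by any nontrivial split are cherries, so any admissible choice of $x,y$ (and in particular any pair at distance at least $4$, which you need anyway to recognize that a reconstruction is binary) is separated by some nontrivial split, e.g.\ the one induced by a middle edge of the $x$--$y$ path. The obstacle you flag---splits becoming trivial after deletion---is real but secondary; the separating-split case is the one that breaks the method, and repairing it would require reasoning about compatibility of the whole split system of $T'$ rather than one pair of derived splits at a time.

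The paper sidesteps this with a different, local argument: choose $x,y$ at distance at least $4$, let $w$ be the neighbour of $x$ and $u,v$ its other two neighbours, identify $u$ and $v$ inside $T_x$ and $T_y$ as medians of suitable leaf triples avoiding $x$ and $y$ (medians are preserved when an unrelated leaf is deleted), and then read off from $T_y$ that the neighbour of $x$ is adjacent to both $u$ and $v$; this pins down $\{u,v\}$ as the unique edge of $T_x$ to which $x$ can be reattached. You would either need to adopt such a median-type argument or substantially strengthen your split analysis to handle splits that separate $x$ from $y$.
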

\begin{proof}
The class of phylogenetic trees on~$X$ is $\{x\}$-recognizable for any~$x\in X$ by Observation~\ref{obs:levnum}. \leo{No phylogenetic tree on~$X$ with $|X|\geq 5$ is~$\{x\}$-reconstructible for any~$x\in X$ since attaching~$x$ to different edges in~$T_x$ gives different non-equivalent trees. Hence, the leaf-reconstruction number of such trees is at least~2. It remains to show that it is exactly~2.}

\leo{Consider a binary phylogenetic tree~$T$ on~$X$ with~$|X|\geq 5$.} Take any two leaves~$x,y\in X$ such that the distance between them is at least~4. Such leaves exist since~$|X|\geq 5$. We will show that~$T$ can be uniquely reconstructed from~$T_x$ and~$T_y$. First observe that any \leo{leaf}-reconstruction of~$T$ is binary since~$T_x$ and~$T_y$ are binary and~$x$ and~$y$ do not have a common neighbour.

Let~$w$ be the neighbour of~$x$ in~$T$ and~$u,v$ the other two neighbours of~$w$. Then~$T_x$ has an edge~$\{u,v\}$.

First assume that neither~$u$ nor~$v$ is a leaf. Then there exist leaves~$a,b\neq y$ such that the path between~$a$ and~$b$ (in~$T$) contains~$u$ but not~$w$ and there exist leaves~$c,d\neq y$ such the path between~$c$ and~$d$ (in~$T$) contains~$v$ but not~$w$. Then~$u$ is the median of~$a,b,c$ and~$v$ is the median of~$a,c,d$ in~$T$. \leo{Call in~$T_x$ and~$T_y$ the median of~$a,b,c$ also~$u$ and the median of $a,c,d$ also~$v$. Then, in}~$T_y$, the neighbour of~$x$ is adjacent to~$u$ and~$v$. Hence, we can reconstruct~$T$ from~$T_x$ by \leo{attaching~$x$ to} the edge~$\{u,v\}$.

Now assume that~$u$ is a leaf. Then there again exist leaves~$c,d\neq y$ such that~$v$ is on the path between~$c$ and~$d$ (in~$\leo{T}$). \leo{In this case,}~$v$ is the median of~$u,c,d$ in~$T$. \leo{Call the median of~$u,c,d$ in~$T_x$ and~$T_y$ also~$v$. Then, since the neighbour of~$x$ in~$T_y$ is adjacent to~$u$ and~$v$, we} can again uniquely reconstruct~$T$ from~$T_x$ by \leo{attaching~$x$ to} the edge~$\{u,v\}$.
\end{proof}

We now consider nonbinary trees.

\begin{theorem}\label{thm:recnumtrees}
Any phylogenetic tree \leo{with at least five leaves has leaf-}reconstruction number~2 unless it is a star, in which case it has \leo{leaf-}reconstruction number~3.
\end{theorem}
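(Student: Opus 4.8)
The plan is to reduce the statement to the binary case already handled in Lemma~\ref{lem:recnumbintrees} by ``binarizing'' the tree, reconstructing it from a two-element sub-deck, and then recovering the original non-binary tree from its $T(N)$-structure. More precisely, let $T$ be a phylogenetic tree on $X$ with $|X|\geq 5$ that is not a star. First I would establish the lower bound: the class of phylogenetic trees is $\{x\}$-recognizable by Observation~\ref{obs:levnum}, and no such $T$ is $\{x\}$-reconstructible for a single leaf $x$, since attaching $x$ to distinct edges of $T_x$ produces non-equivalent trees (here one needs that $T$ has at least two non-equivalent edges to reattach to, which holds as $|X|\geq 5$ and $T$ is not a star). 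Hence the leaf-reconstruction number is at least~2, and the bulk of the work is to exhibit two leaves $x,y$ from whose cards $T_x, T_y$ one can uniquely reconstruct $T$.

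For the upper bound, the key point is that in a non-binary (non-star) tree there is a vertex $r$ of degree $\geq 4$ and, in fact, we want two leaves $x,y$ that are ``far apart'' and whose removal does not collapse any structure we need. I would choose $x$ and $y$ so that the neighbour $w_x$ of $x$ and the neighbour $w_y$ of $y$ are distinct, non-adjacent, and ideally neither $w_x$ nor $w_y$ is a leaf's neighbour that would become degree-2; since $|X|\geq 5$ such a choice exists (analogous to the distance-$\geq 4$ condition in Lemma~\ref{lem:recnumbintrees}, though for high-degree vertices one must be a bit more careful). Given such $x,y$, the card $T_x$ tells us the tree with $x$ removed, and the task is to identify the unique vertex $w_x$ of $T_x$ (possibly created by suppression, or an existing vertex) to which $x$ was attached. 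As in the binary proof, I would use medians: pick leaves $a,b,c,d \in X\setminus\{x,y\}$ witnessing that $w_x$ lies on certain leaf-to-leaf paths, note that medians are preserved in passing to $T_x$ and $T_y$, and use $T_y$ (where $x$ is still present) to pin down exactly which vertex of $T_x$ is $w_x$. The subtlety relative to the binary case is that $w_x$ may have degree $\geq 4$ in $T$, so removing $x$ may leave $w_x$ with degree $\geq 3$ (hence no suppression happens and $w_x$ survives as a vertex of $T_x$); attaching $x$ then means adding a pendant edge at $w_x$ rather than subdividing an edge. One must handle both the ``$w_x$ survives'' and ``$w_x$ had degree 3 and gets suppressed, so $x$ attaches to an edge'' cases, but in each case the median argument locates the correct spot.

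For the star case, one checks separately that the star $K_{1,n}$ on $n=|X|\geq 5$ leaves has leaf-reconstruction number exactly~3. For the lower bound: any two cards $T_x, T_y$ of the star are both stars on $X\setminus\{x\}$, $X\setminus\{y\}$ respectively, but there is also a ``caterpillar-like'' non-star tree $T'$ — take $T'$ to be the tree with a single edge $\{p,q\}$ where all leaves except one are adjacent to $p$ and the remaining leaf is adjacent to $q$ — and $T'$ whose card $T'_z$ for the lone leaf $z$ is a star, while for $z' \neq z$ the card $T'_{z'}$ is again of the same caterpillar shape; one verifies that for any two leaves, there is a competing reconstruction, so two cards do not suffice. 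For the upper bound~3: given any three cards of the star, each is a star on $4$ or more leaves, and I would argue that the only tree whose relevant three cards are all such stars is the star itself (any non-star reconstruction would have a non-trivial split visible in at least one of three well-chosen cards). The main obstacle I expect is the careful bookkeeping in the non-binary upper-bound argument — specifically, ensuring the chosen $x,y$ avoid all degeneracies (adjacent attachment points, attachment points that are leaves, medians that coincide) simultaneously, and correctly distinguishing the suppress/no-suppress cases when reattaching. This is exactly the place where the clean distance-$\geq 4$ trick of the binary proof needs to be replaced by a slightly more delicate case analysis.
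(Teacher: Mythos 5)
Your overall strategy (single cards never suffice; use medians to locate the attachment point from two well-chosen cards; treat the star separately) is the same as the paper's, and your handling of the degree-$\geq 4$ attachment vertex via medians is exactly what the paper does. However, there is a genuine gap in the upper bound for non-star trees: you assert that because $|X|\geq 5$ and $T$ is not a star, you can choose leaves $x,y$ whose neighbours are distinct and non-adjacent (the analogue of the distance-$\geq 4$ condition of Lemma~\ref{lem:recnumbintrees}). This is false for non-binary trees. Take the tree with exactly one nontrivial cut-edge $\{u,v\}$, say with two leaves pendant at $u$ and three at $v$: it is not a star, it has five leaves, yet every pair of leaves has neighbours that are equal or adjacent, so no such $x,y$ exist and your median argument has nothing to work with. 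The paper devotes a separate case to exactly this family (trees with exactly one nontrivial cut-edge), choosing $x$ adjacent to $u$ and $y$ adjacent to $v$ and arguing directly --- reading off from $T_y$ which leaf $z$ shares $x$'s attachment vertex and then attaching $x$ next to $z$ in $T_x$ --- with sub-cases depending on whether $u$ has degree $3$ or at least $4$. Your proposal, as written, does not cover these trees.

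A second, smaller slip: your competing reconstruction for the star's lower bound, a tree in which exactly one leaf hangs off the second internal vertex $q$, is not a phylogenetic tree, since $q$ then has degree $2$ (and after deleting that lone leaf, $q$ becomes an unlabelled degree-$1$ vertex, so that card is not even a star). The correct witness for a given pair $x,y$ puts both $x$ and $y$ at $q$; then deleting either one suppresses $q$ and both cards are stars, as required. Your upper bound of $3$ for the star is fine and matches the paper's argument.
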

\begin{proof}
\leo{As in the proof of Lemma~\ref{lem:recnumbintrees}, it is clear that, for any~$x\in X$, the class of phylogenetic trees on~$X$ is~$\{x\}$-recognizable and no phylogenetic tree on~$X$ is $\{x\}$-reconstructible if~$|X|\geq 5$. Consider a phylogenetic tree~$T$ on~$X$ with~$|X|\geq 5$.}

First consider the case that~$T$ is a star. Then, for any~$x,y\in X$, there exists a phylogenetic tree~$T'\not\sim T$ on~$X$ such that~$T'_x\sim T_x$ and~$T'_y\sim T_y$. Hence, the $X$-reconstruction number of~$T$ is at least~3. To see that it is exactly~3, note that any phylogenetic tree that is not a star has at most two \leo{elements in its $X$-deck that are stars}. Hence, since there exists a unique phylogenetic star tree on~$X$, the reconstruction number of~$T$ is~3.

Now consider the case that~$T$ contains exactly one nontrivial cut-edge~$\{u,v\}$. Take one leaf~$x$ adjacent to~$u$ and one leaf~$y$ adjacent to~$v$. First suppose that~$u$ has degree~\leo{3}. Then~$v$ has degree at least~\leo{4}. Hence,~$T_x$ is a star tree and~$T_y$ has exactly one nontrivial cut-edge~$\{u',v'\}$. Suppose~$x$ is adjacent to~$u'$. Then~$u'$ is adjacent to exactly one other leaf~$z$. Hence, \leo{we can uniquely reconstruct~$T$ from~$T_x$ by attaching~$x$ to the edge incident to~$z$.} Now suppose that both~$u$ and~$v$ have degree at least~3. Then~$T_x$ and~$T_y$ both have exactly one nontrivial cut-edge. Let~$z$ be any leaf adjacent to the neighbour of~$x$ in~$T_y$. Then \leo{we can uniquely reconstruct~$T$ from~$T_x$ by adding~$x$ with an edge to the neighbour of~$z$.}

Finally, assume that~$T$ has at least two nontrivial cut-edges. Then there exist two leaves~$x,y\in X$ such that the distance between them is at least~4. Let~$w$ be the neighbour of~$x$ in~$T$ and~$u,v\neq x$ two other neighbours of~$w$.

If~$w$ has degree~3, then we can proceed as in the proof of Lemma~\ref{lem:recnumbintrees}. 

Now assume~$w$ has degree at least~4. Then it has a neighbour~$z\notin\{u,v,x\}$. Then there exist leaves~$a,b,c\notin\{x,y\}$ reachable by paths from~$u,v$ and~$z$ respectively that do not contain~$w$. Therefore, the median of~$a,b$ and~$c$ \leo{in~$T$ is~$w$. Hence, we can uniquely reconstruct~$T$ from~$T_x$ by adding~$x$ with an edge to} the median of~$a,b$ and~$c$.
\end{proof}

\begin{corollary}\label{cor:cutedge}
Any decomposable phylogenetic network with at least five leaves has leaf-reconstruction number \leo{at most}~2.
\end{corollary}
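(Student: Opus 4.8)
The plan is to combine the structural decomposition from Theorem~\ref{thm:cutedge} with the reconstruction-number bound for trees from Theorem~\ref{thm:recnumtrees}. Let $N$ be a decomposable phylogenetic network on $X$ with $|X|\geq 5$ and let $e$ be a nontrivial cut-edge of $N$ inducing the split $A|B$. The idea is that two elements of the $X$-deck suffice: we want to choose leaves $a\in A$ and $b\in B$ so that $\{N_a, N_b\}$ already forces $N$. The recognizability part causes no trouble: as in the proof of Theorem~\ref{thm:cutedge}, the class of decomposable networks on $X$ with at least five leaves is leaf-recognizable, and this only uses a bounded number of deck elements, so it is in particular $\{a,b\}$-recognizable for suitable $a,b$ (or we just note that recognizability from the full deck plus weak reconstructibility from two elements gives the bound, adjusting the count of deck elements needed).

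The main step is the reconstruction of $T(N)$ from only two deck elements. By Observation~\ref{obs:treedeck}, $(T(N))_a = T(N_a)$ and $(T(N))_b = T(N_b)$, so $T(N_a)$ and $T(N_b)$ are two elements of the $X$-deck of the tree $T(N)$. By Theorem~\ref{thm:recnumtrees}, $T(N)$ has leaf-reconstruction number at most $2$ (it has at least five leaves), so there is a choice of two leaves — and inspecting the proof of Theorem~\ref{thm:recnumtrees}, the two leaves can be chosen to lie on opposite sides of a prescribed nontrivial cut-edge, e.g. one leaf adjacent to each endpoint of $\{u,v\}$, or a pair at distance at least $4$ — from which $T(N)$ is determined. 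Pick $a,b$ accordingly, with $a$ on the $A$-side and $b$ on the $B$-side of $e$ in $T(N)$; note a leaf adjacent in $T(N)$ to an endpoint of the edge representing $A|B$ is indeed a leaf of $N$ on the corresponding side of $e$. Then any $\{a,b\}$-reconstruction $N'$ of $N$ has $T(N') = T(N)$, hence contains a unique edge $e'$ representing the split $A|B$.

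The final step mirrors the argument in Theorem~\ref{thm:cutedge} verbatim, but with the two deck elements $N_a$ and $N_b$ in the roles of $N_{a_1}$ and $N_{b_1}$: since $a$ is nontrivial-side data, $N_a$ contains a unique edge $f$ inducing $A\setminus\{a\}\mid B$, and the component of $N_a - f$ containing $B$ is equivalent to the component of $N' - e'$ containing $B$; symmetrically, $N_b$ determines the component of $N'-e'$ containing $A$. Reattaching along $e'$ shows $N'\sim N$, so $N$ is $\{a,b\}$-reconstructible and its leaf-reconstruction number is at most $2$. The one point requiring a little care — and the place I expect the only real friction — is making sure the two leaves $a,b$ selected by the tree argument of Theorem~\ref{thm:recnumtrees} can simultaneously be taken to straddle the chosen nontrivial cut-edge $e$ of $N$ (so that $f$ and $g$ really induce the advertised near-splits of $N$); this should follow by choosing $e$ to correspond to a nontrivial cut-edge of $T(N)$ and then invoking the explicit leaf choices in that proof, but it is worth spelling out rather than leaving implicit. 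One should also double-check the degenerate subcase where $T(N)$ is a star — but a star $T(N)$ would force $N$ to have no nontrivial cut-edge, contradicting decomposability, so this case does not arise.
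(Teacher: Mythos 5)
Your proposal is correct and follows essentially the same route as the paper: the paper likewise reconstructs $T(N)$ from two deck elements via the proof of Theorem~\ref{thm:recnumtrees} and then reattaches the two sides of the cut-edge exactly as in the proof of Theorem~\ref{thm:cutedge}. The only cosmetic difference is that the paper avoids your anticipated ``friction point'' by reversing the order of choices---it first picks $x,y$ at maximum distance in $T(N)$ and then takes the nontrivial cut-edge to be one lying on the path between them, rather than prescribing the cut-edge and then hunting for suitable straddling leaves.
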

\begin{proof}
Let~$N$ be a phylogenetic network that has at least five leaves and at least one nontrivial cut-edge and let~$x$ and~$y$ be maximum distance apart in~$T(N)$. Then any $\{x,y\}$-reconstruction has a nontrivial cut-edge. Moreover, since the distance between~$x$ and~$y$ in~$T(N)$ is at least~3, $T(N)$ is $\{x,y\}$-reconstructable by the proof of Theorem~\ref{thm:recnumtrees}. Moreover, by the proof of Theorem~\ref{thm:cutedge}, it now follows that~$N$ is $\{x,y\}$-reconstructable. 
\end{proof}

\section{Low-level networks}\label{sec:low}

In this section we show that all \leo{binary} networks with \leo{at least five leaves and} level at most~4 are leaf-reconstructible and, moreover, have leaf-reconstruction number at most~2. The proofs are based on the following notions.

\begin{definition} A binary level-$k$ \emph{generator}, for~$k\geq 2$, is a 2-connected 3-regular multigraph~$G=(V,E)$ with~$|E|-|V|+1=k$. The \emph{underlying generator} of a binary simple level-$k$ network~$N$ is the generator obtained from~$N$ by deleting all leaves and suppressing resulting degree-2 vertices. \leo{For an edge~$e$ of~$G$,} we say that a leaf~$x$ \emph{is on edge}~$e$ in~$N$ if the neighbour of~$x$ is on a path that is suppressed into edge~$e$. If~$x$ is on edge~$e$ then we also say that~$e$ \emph{contains}~$x$ and we refer to~$e$ as the \emph{$x$-edge}.
\end{definition}

See Figure~\ref{fig:generators} for all binary level-$k$ generators, for $2\leq k\leq 4$.

\begin{figure}
\centering
 \begin{tikzpicture}
	 \tikzset{lijn/.style={very thick}}
	 
	 
	\draw (-1,0) node {level-2};
	\draw[very thick, fill, radius=0.06] (1,0) circle;
	\draw[very thick, fill, radius=0.06] (2,0) circle;
	\draw[lijn] (1,0) -- (2,0);
	\draw[lijn] (1,0) to[out=55,in=125] (2,0);
	\draw[lijn] (1,0) to[out=-55,in=-125] (2,0);	
	
	\draw (-1,-1.5) node {level-3};
	\draw[very thick, fill, radius=0.06] (1,-2) circle;
	\draw[very thick, fill, radius=0.06] (2,-2) circle;
	\draw[lijn] (1,-2) to[out=45,in=135] (2,-2);
	\draw[lijn] (1,-2) to[out=-45,in=-135] (2,-2);	
	\draw[very thick, fill, radius=0.06] (1,-1) circle;
	\draw[very thick, fill, radius=0.06] (2,-1) circle;
	\draw[lijn] (1,-1) to[out=45,in=135] (2,-1);
	\draw[lijn] (1,-1) to[out=-45,in=-135] (2,-1);	
	\draw[lijn] (1,-2) -- (1,-1);
	\draw[lijn] (2,-2) -- (2,-1);
	\begin{scope}[xshift=0.0cm,yshift=-0.2cm]
	\draw[very thick, fill, radius=0.06] (5,-1.5) circle;
	\draw[very thick, fill, radius=0.06] (5,-.6) circle;
	\draw[very thick, fill, radius=0.06] (4.3,-2) circle;
	\draw[very thick, fill, radius=0.06] (5.7,-2) circle;
	\draw[lijn] (5,-1.5) -- (5,-.6);
	\draw[lijn] (5,-1.5) -- (4.3,-2);
	\draw[lijn] (5,-1.5) -- (5.7,-2);
	\draw[lijn] (5,-.6) -- (4.3,-2);
	\draw[lijn] (5,-.6) -- (5.7,-2);
	\draw[lijn] (4.3,-2) -- (5.7,-2);
	\end{scope}
	
	\draw (-1,-4.5) node {level-4};
	\begin{scope}[xshift=0.0cm,yshift=-0.5cm]
	\draw[very thick, fill, radius=0.06] (1,-5) circle;
	\draw[very thick, fill, radius=0.06] (2,-5) circle;
	\draw[lijn] (1,-5) to[out=45,in=135] (2,-5);
	\draw[lijn] (1,-5) to[out=-45,in=-135] (2,-5);	
	\draw[very thick, fill, radius=0.06] (1,-4) circle;
	\draw[very thick, fill, radius=0.06] (2,-4) circle;
	\draw[lijn] (1,-4) -- (1,-5);
	\draw[lijn] (2,-4) -- (2,-5);	
	\draw[lijn] (1,-4) -- (2,-4);	
	\draw[very thick, fill, radius=0.06] (1,-3) circle;
	\draw[very thick, fill, radius=0.06] (2,-3) circle;
	\draw[lijn] (1,-3) to[out=45,in=135] (2,-3);
	\draw[lijn] (1,-3) to[out=-45,in=-135] (2,-3);	
	\draw[lijn] (1,-4) -- (1,-3);
	\draw[lijn] (2,-4) -- (2,-3);
	\draw (1.5,-5.7) node {$G_1$};
	\end{scope}
	
	
	\begin{scope}[xshift=0.0cm,yshift=-0.5cm]
	\draw[very thick, fill, radius=0.06] (8,-5) circle;
	\draw[very thick, fill, radius=0.06] (9,-5) circle;
	\draw[lijn] (8,-5) to[out=45,in=135] (9,-5);
	\draw[lijn] (8,-5) to[out=-45,in=-135] (9,-5);	
	\draw[very thick, fill, radius=0.06] (9,-3.5) circle;
	\draw[very thick, fill, radius=0.06] (9,-4.5) circle;
	\draw[lijn] (8,-4) -- (8,-5);
	\draw[lijn] (9,-4.5) -- (9,-5);	
	\draw[lijn] (9,-3.5) to[out=-45,in=45] (9,-4.5);	
	\draw[lijn] (9,-3.5) to[out=-135,in=135] (9,-4.5);	
	\draw[very thick, fill, radius=0.06] (8,-3) circle;
	\draw[very thick, fill, radius=0.06] (9,-3) circle;
	\draw[lijn] (8,-3) to[out=45,in=135] (9,-3);
	\draw[lijn] (8,-3) to[out=-45,in=-135] (9,-3);	
	\draw[lijn] (8,-4) -- (8,-3);
	\draw[lijn] (9,-3.5) -- (9,-3);
	\draw (8.5,-5.7) node {$G_3$};
	\end{scope}
	
	\begin{scope}[xshift=3.5cm,yshift=3cm]
	\draw[very thick, fill, radius=0.06] (.5,-8.5) circle;
	\draw[very thick, fill, radius=0.06] (2.5,-8.5) circle;
	\draw[lijn] (.5,-8.5) to[out=35,in=145] (2.5,-8.5);
	\draw[lijn] (.5,-8.5) to[out=-35,in=-145] (2.5,-8.5);	
	\draw[very thick, fill, radius=0.06] (.5,-6.5) circle;
	\draw[very thick, fill, radius=0.06] (2.5,-6.5) circle;
	\draw[lijn] (.5,-6.5) -- (1.5,-6);
	\draw[lijn] (.5,-6.5) -- (1.5,-7);
	\draw[lijn] (1.5,-6) -- (1.5,-7);
	\draw[lijn] (2.5,-6.5) -- (1.5,-7);
	\draw[lijn] (2.5,-6.5) -- (1.5,-6);
	\draw[very thick, fill, radius=0.06] (1.5,-6) circle;
	\draw[very thick, fill, radius=0.06] (1.5,-7) circle;
	\draw[lijn] (.5,-8.5) -- (.5,-6.5);
	\draw[lijn] (2.5,-8.5) -- (2.5,-6.5);
	\draw (1.5,-9.2) node {$G_2$};
	\end{scope}
	
	\begin{scope}[xshift=0.5cm,yshift=-9.7cm]
	\draw[very thick, fill, radius=0.06] (0,0) circle;
	\draw[very thick, fill, radius=0.06] (2,0) circle;
	\draw[very thick, fill, radius=0.06] (1,1) circle;
	\draw[very thick, fill, radius=0.06] (0,3) circle;
	\draw[very thick, fill, radius=0.06] (2,3) circle;
	\draw[very thick, fill, radius=0.06] (1,2) circle;
	\draw[lijn] (0,0) -- (2,0);
	\draw[lijn] (0,0) -- (1,1);
	\draw[lijn] (2,0) -- (1,1);
	\draw[lijn] (0,3) -- (2,3);
	\draw[lijn] (0,3) -- (1,2);
	\draw[lijn] (2,3) -- (1,2);
	\draw[lijn] (0,0) -- (0,3);
	\draw[lijn] (2,0) -- (2,3);
	\draw[lijn] (1,1) -- (1,2);
	\draw (1,-0.5) node {$G_4$};
	\end{scope}

	\begin{scope}[xshift=4cm,yshift=-9.7cm]
	\draw[very thick, fill, radius=0.06] (1,0.3) circle;
	\draw[very thick, fill, radius=0.06] (0,1) circle;
	\draw[very thick, fill, radius=0.06] (2,1) circle;
	\draw[very thick, fill, radius=0.06] (0,2) circle;
	\draw[very thick, fill, radius=0.06] (2,2) circle;
	\draw[very thick, fill, radius=0.06] (1,2.7) circle;
	\draw[lijn] (1,0.3) -- (0,1);
	\draw[lijn] (1,0.3) -- (2,1);
	\draw[lijn] (1,0.3) -- (1,2.7);
	\draw[lijn] (0,1) -- (2,2);
	\draw[lijn] (0,1) -- (0,2);
	\draw[lijn] (2,1) -- (0,2);
	\draw[lijn] (2,1) -- (2,2);
	\draw[lijn] (0,2) -- (1,2.7);
	\draw[lijn] (2,2) -- (1,2.7);
	\draw (1,-0.5) node {$G_5$};
	\end{scope}
	\end{tikzpicture}
\caption{\label{fig:generators} All binary level-$k$ generators, for $2\leq k\leq 4$.
}
\end{figure}
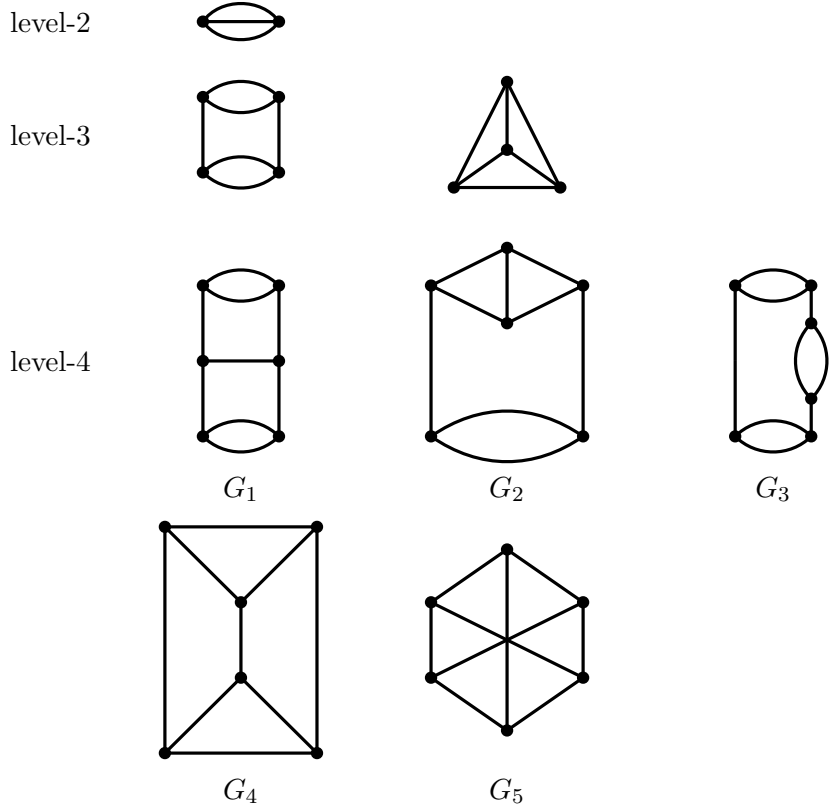

We say that two cycles are \emph{similar} if they have the same number of vertices and the same number of vertices that are neighbours of leaves, and hence also the same number of generator vertices (i.e. vertices that are not neighbours of leaves). 


\leo{The following three lemmas show several special cases of simple level-$k$ networks that are leaf-reconstructible. We will use these lemmas to show that all simple level-4 networks are leaf-reconstructible, if they have at least five leaves.}

\begin{lemma}\label{lem:4cycle}
Let~$N$ be a binary simple level-$k$ network on~$X$, with~$k\geq 2$ and~$|X|\geq 5$. If~$N$ contains a cycle~$C$ containing the neighbours of leaves~$a,b,c$ and~$d$ and either
\begin{itemize}
\item[(i)] there is no cycle~$C'\neq C$ in~$N$ that is similar to~$C$ and contains the neighbours of~$a,b$ and~$c$; or
\item[(ii)] $c$ and~$d$ are on the same edge of the underlying generator and there is no cycle~$C'\neq C$ in~$N$ that is similar to~$C$ and contains the neighbours of~$a,b,c$ and~$d$ in a different order,
\end{itemize}
then~$N$ is $\{d,e\}$-reconstructible, for any~$e\in X\setminus\{a,b,c,d\}$.
\end{lemma}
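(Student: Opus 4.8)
The plan is to show that $N$ is $\{d,e\}$-reconstructible by arguing that any $\{d,e\}$-reconstruction $N'$ of $N$ must be obtained from $N_d$ by attaching $d$ to the same edge that was deleted when forming $N_d$ from $N$, so that $N'\sim N$. Since the class of simple binary level-$k$ networks with at least five leaves (or four, if $k\geq 2$) is leaf-recognizable by the remark at the start of Section~\ref{sec:simple}, and since the level is leaf-reconstructible by Proposition~1 and Observation~\ref{obs:levnum}, I only need to establish weak reconstructibility, and in fact I will directly pin down the edge of $N_d$ to which $d$ must be reattached. The key observation is that in $N_d$ there is a unique edge $f$ between the neighbours of the leaves flanking $d$ on $C$ (or, more carefully, $d$'s deletion either merges a path of $C$ into a single edge, or, if $d$'s neighbour had a second leaf-neighbour on $C$, leaves a recognizable local pattern); the bulk of the work is showing that $N'$ must also contain a cycle playing the role of $C$ with the neighbours of $a,b,c$ (and in case (ii) also $d$) in the right configuration, so that the reattachment point for $d$ is forced.

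Here is how I would carry it out. First, I would use the extra leaf $e\in X\setminus\{a,b,c,d\}$, which exists since $|X|\geq 5$: the pseudo-network $N_e$ still contains the cycle $C$ together with the neighbours of $a,b,c,d$ on it, and since $e\neq a,b,c,d$, the hypothesis (i) or (ii) still holds in $N_e$ verbatim (deleting $e$ cannot create a new cycle similar to $C$ through the neighbours of $a,b,c$, nor change whether $c,d$ lie on the same generator edge). Because $N'_e\sim N_e$, the reconstruction $N'$ therefore also contains such a cycle $C$ and, by the uniqueness guaranteed by (i) or (ii), it contains it in essentially a unique way — this is the step that forces the "global skeleton" of $N'$ to look like that of $N$ near $C$. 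Then I would turn to $N_d$: here $C$ becomes either a similar-looking shorter cycle (with one fewer leaf-neighbour) with a distinguished edge $f$, or, if $d$'s neighbour had another leaf-neighbour, a configuration where the attachment edge for $d$ is a specific pendant-adjacent edge. Using $N'_d\sim N_d$, I would locate the same edge $f$ inside $N'$ (identifying it via the already-forced cycle through $a,b,c$), and conclude that $N'$ is obtained from $N_d$ by attaching $d$ to $f$; hence $N'\sim N$.

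The main obstacle, and where I would spend most of the care, is the case analysis needed to make "the attachment edge for $d$ is forced" precise and to handle the two sub-cases (i) and (ii) uniformly. In case (i), uniqueness of the cycle through the neighbours of $a,b,c$ that is similar to $C$ immediately fixes where $C$ sits, and then within that cycle the position of $d$ relative to $a,b,c$ (and the structure of $N_d$) determines the reattachment edge; the delicate point is that there might be two candidate edges of $C$ in $N_d$ consistent with $N_d$, and one must check that attaching $d$ to the wrong one would either change some $N'_e$ (contradiction) or produce a cycle $C'$ violating (i). In case (ii), where $c$ and $d$ share a generator edge, the cycle through $a,b,c$ need not be unique, but the hypothesis rules out cycles similar to $C$ through $a,b,c,d$ in a different cyclic order; the work is to show that the ordering of $a,b,c,d$ on $C$ is reconstructible from $N_d$ (the order of $a,b,c$ is visible in $N_d$, and the fact that $c,d$ are on the same generator edge pins down $d$'s side), and that the only reattachment consistent with both $N_d$ and the forced order is the correct one. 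I expect the argument to split further depending on whether the neighbour of $d$ in $N$ has degree-3 with all three directions "used up" by $C$ and the pendant $d$, versus $d$'s neighbour lying at a generator vertex, but these are routine once the cycle $C$ has been located in $N'$.
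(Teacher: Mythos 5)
Your proposal follows essentially the same route as the paper: use $N_e$ to locate the cycle and fix the position of $d$'s neighbour relative to those of $a,b,c$ (the order in case (ii)), then use the uniqueness of the similar cycle through $a,b,c$ in $N_d$ to force the attachment edge, so that any $\{d,e\}$-reconstruction agrees with $N$. The one step you leave as a sketch — resolving the "two candidate edges" ambiguity — is handled in the paper exactly along the lines you suggest: the index $i$ of $d$'s neighbour on the arc of $C_e$ from the neighbour of $c$ to the neighbour of $a$ is read off from $N_e$, and the attachment edge in $N_d$ is the $i$-th or $(i-1)$-th edge of the corresponding path depending on whether the neighbour of $e$ occurs among its first $i$ vertices.
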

\begin{proof}
(i) Note that~$N_e$ has a cycle~\leo{$C_e$} containing the neighbours of~$a,b,c$ and~$d$ and no other cycle that is similar to~$C_e$ and contains the neighbours of~$a,b,c$ and~$d$. Assume without loss of generality that these neighbours are visited in this order. Suppose that the neighbour of~$d$ is the $i$-th vertex on the path from the neighbour of~$c$ to the neighbour of~$a$ on~$C_e$. Now consider~$N_d$, which contains a cycle~$C_d$ containing the neighbours of~$a,b$ and~$c$ and no other cycle similar to~$C_d$ that contains the neighbours of~$a$,~$b$ and~$c$. Let~$P$ be the path from the neighbour of~$c$ to the neighbour of~$a$ on~$C_d$\leo{, not via the neighbour of~$b$}. If the neighbour of~\leo{$e$} is among the first~$i$ vertices of~$P$ then we let~$f$ be the \leo{$i$-th} edge on~$P$. Otherwise, we let~$f$ be the \leo{$(i-1)$-th} edge on~$P$. Then the unique way to insert~$d$ into~$N_d$ is by attaching it to edge~$f$.

(ii) Assume without loss of generality that the distance between~$c$ and~$d$ is~3. Note that~$N_e$ has a cycle~\leo{$C_e$} containing the neighbours of~$a,b,c$ and~$d$ and no cycle that is similar to~$C_e$ and contains the neighbours of~$a,b,c$ and~$d$ in a different order. Assume again that~$C_e$ visits~$a,b,c$ and~$d$ in this order. Now consider~$N_d$ and choose any cycle~\leo{$C_d$} containing the neighbours of~$a,b$ and~$c$. Let~$f$ be the first edge on the path from the neighbour of~$c$ to the neighbour of~$a$ along~$C_d$, \leo{not via the neighbour of~$b$}. Then the unique way to insert~$d$ into~$N_d$ is by attaching it to edge~$f$.
\end{proof}



\begin{lemma}\label{lem:bubble}
Let~$N$ be a binary simple level-$k$ network on~$X$, with~$k\geq 2$ and~$|X|\geq 5$. If the underlying generator of~$N$ has a pair of multi-edges~$e_1,e_2$ then, unless one of~$e_1,e_2$ contains two leaves and the other one no leaves in~$N$, then~$N$ has leaf-reconstruction number at most~2.
\end{lemma}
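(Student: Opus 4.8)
The plan is to exploit the very rigid local structure created by a pair of multi-edges $e_1,e_2$ between two generator vertices, say $p$ and $q$. In the network $N$, the two parallel paths between $p$ and $q$ form a cycle $C$ (a "bubble"), and the only leaves on $C$ are those on $e_1$ and those on $e_2$; the third edge at $p$ and the third edge at $q$ lead into the rest of the network. Let $n_1,n_2$ be the numbers of leaves on $e_1,e_2$; we are assuming the pair $\{n_1,n_2\}$ is not $\{0,2\}$. I would split into cases according to $(n_1,n_2)$ up to symmetry: both edges empty; one empty and one with $\geq 3$ leaves; one edge with $1$ leaf; both edges with $\geq 1$ leaf (so in particular $n_1+n_2\geq 2$ with neither side forced to be exactly $2$ opposite an empty side). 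In most of these cases $C$ is a cycle with at least two leaf-neighbours on it, and I would try to reduce to Lemma~\ref{lem:4cycle} or to a direct "attach to the bubble" argument as was done informally for the network $N$ of Figure~\ref{fig:prelim}.

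The key observations I would record first. (1) From the $X$-deck one can recover, via Corollary~\ref{cor:recog} and the proof technique of Corollary~\ref{cor:levk}, that $N$ is simple binary level-$k$, and one can recover the underlying generator $G$ together with the multiset of leaf-counts on its edges (up to automorphism of $G$): indeed for any leaf $x$ on an edge $e$ with at least two leaves, $N_x$ has the same generator and the same edge-leaf-counts except $e$ drops by one, while for a leaf $x$ alone on its edge the generator either stays the same (the edge becoming empty) or, if removing $x$ creates a suppressible structure, changes in a controlled way — but in any case the generator and edge-fillings of $N$ are determined because $G$ is one of finitely many generators and the decks of distinct fillings differ. (2) Since $e_1,e_2$ are parallel, the cycle $C$ they bound is the unique shortest cycle through $p$ and $q$ of that length, and no other cycle of $N$ is similar to $C$ and uses the same set of leaf-neighbours in a different cyclic pattern — this is exactly the hypothesis shape of Lemma~\ref{lem:4cycle}(ii). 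So when $n_1+n_2\geq 1$ and there are at least two other leaves in $X$, pick a leaf $d$ on $C$ that is adjacent (distance $\leq 3$ along $C$) to another leaf-neighbour on $C$, or to $p$ or $q$; applying Lemma~\ref{lem:4cycle} with a suitable $e\in X\setminus\{a,b,c,d\}$ (which exists since $|X|\geq 5$) gives $\{d,e\}$-reconstructibility, hence leaf-reconstruction number $\leq 2$.

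It remains to handle the leftover configurations not covered by that reduction: (a) both $e_1$ and $e_2$ empty, and (b) exactly one of them carrying a single leaf and the other empty, and possibly (c) the degenerate case where $|X|=5$ and almost all leaves sit on $e_1,e_2$ so that "another leaf $e$" is hard to choose together with four leaf-neighbours on a single small cycle. For (a): the bubble is then a 2-cycle made of two internal vertices $r_1$ (on $e_1$) and $r_2$ (on $e_2$) each of degree $3$, with $r_1,r_2$ joined to $p$ and $q$ respectively — wait, more precisely $p$ and $q$ are themselves the endpoints and the bubble is a double edge; then the local picture near $\{p,q\}$ is forced and any reconstruction must reproduce it, so I would argue that $N$ is already recoverable from any one $N_z$ with $z$ not on any edge incident to $p,q$ (reattach $z$ using medians/similar-cycle uniqueness as in Lemma~\ref{lem:3chain}), giving reconstruction number even $1$ on a good coordinate, hence $\leq 2$. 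For (b): the single leaf $s$ on $e_1$ has its neighbour on the bubble; in $N_s$ the bubble collapses to a double edge between $p$ and $q$, and in $N_z$ for $z\neq s$ on the "outside" the bubble survives with $s$ on it, so from $N_s$ and $N_z$ together $s$ must be reattached to the unique double edge of $N_s$, giving $\{s,z\}$-reconstructibility; here I use that $|X|\geq 5$ so such a $z$ exists with $z$ not adjacent to $s$.

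The main obstacle I expect is exactly the bookkeeping in observation (1): showing that the generator of $N$ and the per-edge leaf-counts are leaf-reconstructible when some edges carry only zero or one leaf, because deleting the lone leaf on an edge can merge that edge with its neighbours and so change the combinatorial generator. One must argue that even so the original $(G,\text{filling})$ is determined — either because the number of length-1 and length-0 edges is itself read off from the deck (count how many deck elements have a given generator), or because the excluded case $\{n_1,n_2\}=\{0,2\}$ is precisely the one where this merging would create an ambiguity (it is the case realized by Figure~\ref{fig:notleafreconstructible}-type behaviour: a $2$-leaf edge opposite an empty edge in a bubble can be "read" as the two leaves sitting on either parallel edge). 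I would isolate this as a small claim, prove it by a short case check over the finitely many generators of level $\leq k$ relevant here, and then feed its output into the Lemma~\ref{lem:4cycle}-based argument and the hand arguments for cases (a),(b) above.
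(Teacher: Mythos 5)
Your overall shape (a case split on the numbers $(n_1,n_2)$ of leaves on $e_1,e_2$, Lemma~\ref{lem:4cycle} for the richest case, and direct ``re-attach to the forced multi-edge'' arguments for the sparse cases) matches the paper, and your treatment of your case (b) --- a single leaf $s$ on the pair, forced back onto the unique double edge of $N_s$ --- is essentially the paper's first case. But the central reduction you rely on is invalid: Lemma~\ref{lem:4cycle} requires \emph{four} leaves $a,b,c,d$ whose neighbours all lie on the cycle $C$, and the bubble cycle bounded by $e_1,e_2$ carries only $n_1+n_2$ leaf-neighbours. So your claim that the lemma applies ``when $n_1+n_2\geq 1$ and there are at least two other leaves'' cannot be right; applied to the bubble cycle it only covers $n_1+n_2\geq 4$, which (since three or more leaves on one generator edge already produce a 3-chain, handled by the proof of Lemma~\ref{lem:3chain}) means only the $(2,2)$ configuration. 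The configurations $(1,1)$ and $(2,1)$ therefore fall through your case analysis entirely and are exactly where the real work lies. The paper handles them with bespoke two-element arguments: for $(1,1)$, with $x$ on $e_1$ and $a$ on $e_2$, any other deck element $N_y$ exhibits a unique 4-cycle through the neighbours of $x$ and $a$ with these neighbours non-adjacent, while $N_x$ has a unique 3-cycle through the neighbour of $a$, which forces $x$ onto the edge of that triangle not incident to $a$'s neighbour; the $(2,1)$ case is the same argument one size up (unique 5-cycle in $N_y$ versus unique 4-cycle in $N_x$). You would need to supply arguments of this kind for these two cases.

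Two further points. First, your ``observation (1)'' --- recovering the generator $G$ and its per-edge leaf counts from the deck --- is nowhere needed in the paper's proof and would be genuinely awkward to establish: $k$ is arbitrary in this lemma, so ``a short case check over the finitely many generators'' is not available uniformly in $k$, and in any case the conclusion only requires exhibiting one specific pair $\{x,y\}$ for which every $\{x,y\}$-reconstruction is equivalent to $N$; that is a local argument and does not require recovering $G$ together with its leaf counts globally. Second, your case (a), in which both $e_1$ and $e_2$ are leafless, is vacuous: $N$ is a phylogenetic network and hence has no parallel edges, so at least one of $e_1,e_2$ must carry a leaf; the discussion of that case (which visibly trails off in your write-up) can simply be deleted.
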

\begin{proof}
First suppose that there is exactly one leaf~$x$ that is on one of the multi-edges. Then~$N_x$ has multi-edges. Since multi-edges are not allowed in phylogenetic networks, the unique way to insert~$x$ into~$N_x$ is by attaching it to one of the multi-edges.

Now suppose that there is exactly one leaf~$x$ on~$e_1$ and exactly one leaf~$a$ on~$e_2$. Let~$y$ be any other leaf. Then~$N_y$ contains a \leo{unique} 4-cycle containing the neighbours of~$x$ and~$a$, \leo{and these neighbours are not adjacent.} Since~$N_x$ contains a unique 3-cycle~$C$ containing the neighbour of~$a$, the only way to insert~$x$ into~$N_x$ is by attaching it to the unique edge on~$C$ that is not incident to the neighbour of~$a$.

Now suppose that there are exactly two leaves~$a,b$ on~$e_1$ and exactly one leaf~$x$ on~$e_2$. Let~$y\in X\setminus\{a,b,x\}$. Then, $N_y$ contains a unique 5-cycle containing the neighbours of~$a,b$ and~$x$ \leo{and the neighbour of~$x$ is not adjacent to the neighbours of~$a$ and~$b$.} Since~$N_x$ contains a unique 4-cycle~$C$ containing the neighbours of~$a$ and~$b$, the unique way to insert~$x$ into~$N_x$ is by attaching it to the unique edge on~$C$ that is not incident to the neighbours \leo{of~$a$ and~$b$}.

Now suppose that there are exactly two leaves~\leo{$a,b$} on~$e_1$ and exactly two leaves~\leo{$c,d$} on~$e_2$. This case is handled by Lemma~\ref{lem:4cycle}~(i).

The only remaining possibility is that there is a 3-chain, which is handled by the proof of Lemma~\ref{lem:3chain}.
\end{proof}

\begin{lemma}\label{lem:3incident}
Let~$N$ be a binary simple level-$k$ network on~$X$, with~$k\geq 2$ and~$|X|\geq 5$. If the underlying generator of~$N$ has three pairwise incident edges and~$N$ has at least three leaves on these edges, then~$N$ has leaf-reconstruction number at most~2.
\end{lemma}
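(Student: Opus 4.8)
The three pairwise incident edges $e_1,e_2,e_3$ of the underlying generator $G$ share, by 3-regularity, exactly two common vertices — they form a "theta-like" triple joining vertices $u$ and $v$, or else two of them share one endpoint and the third shares the other endpoints. (Indeed, consulting Figure~\ref{fig:generators}, the only generators with three pairwise incident edges are those containing a ``triple bond'', and the pattern of the triple bond is the same in each.) The plan is to case-split on how many of the at least three leaves lie on $e_1,e_2,e_3$ and, within each case, to find a reconstruction witness: exhibit a leaf $x$ on one of these edges together with a second leaf $y$ elsewhere so that $N$ is $\{x,y\}$-reconstructible. In each subcase the argument follows the template already used repeatedly in Lemmas~\ref{lem:3chain}, \ref{lem:4cycle} and~\ref{lem:bubble}: after deleting $x$, some forbidden or near-forbidden configuration (a multi-edge, or a short cycle of a prescribed length through specified leaf-neighbours) forces the position of $x$ uniquely; and after deleting $y$, one confirms that $N$ still exhibits the 3-incident-edges configuration so the recognizing feature is stable and no other attachment of $x$ could yield an equivalent network. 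By Corollary~\ref{cor:cutedge} and Lemma~\ref{lem:3chain} we may assume $N$ is simple (one blob) and contains no 3-chain, so no edge of $G$ carries three or more leaves and the total number of leaves on $e_1,e_2,e_3$ is either $3$ or $4$ (at most two per edge, at least three in total).

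**The subcases.** If exactly one of the three edges, say $e_1$, contains no leaves, then the common endpoints $u,v$ of $e_1,e_2,e_3$ are joined by the two paths through $e_2,e_3$ together with the bare edge $e_1$, which forms a multi-edge with whichever of $e_2,e_3$ the relevant leaf sits on: concretely, pick any leaf $x$ on $e_2$; in $N_x$ the edge $e_1$ and the (now leaf-free) path along $e_2$ create a multi-edge between $u$ and $v$, which is forbidden in a phylogenetic network, so $x$ must be re-attached to one of these two parallel edges. A second leaf $y$ (which exists since $|X|\ge 5$) is chosen so that deleting it leaves the three-incident-edge configuration intact, pinning down which of the two parallel edges receives $x$ via a similar-cycle argument exactly as in Lemma~\ref{lem:4cycle}(i) or Lemma~\ref{lem:bubble}. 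If instead each of $e_1,e_2,e_3$ carries at least one leaf, the three edges together with the short connecting paths form a small ``prism-like'' or multi-cycle gadget whose cycles have tightly constrained lengths; deleting a leaf $x$ from one of them shortens exactly one cycle by one, and the resulting unique shortest cycle through the neighbours of the leaves on the other two edges identifies the edge to which $x$ must be re-attached, again matching the cycle-length bookkeeping carried out in the proof of Lemma~\ref{lem:bubble}. The bound $|X|\ge 5$ is used, as in the earlier lemmas, to guarantee the auxiliary leaf $y$ exists and that the leaf-free versions of the relevant cycles are not so short as to collapse the recognizing feature.

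**Main obstacle.** The delicate point is ensuring, in each subcase, that \emph{no other} attachment of $x$ into $N_x$ produces a pseudo-network equivalent to $N$ — i.e. proving genuine $\{x,y\}$-reconstructibility rather than merely recognizability. This is where the auxiliary leaf $y$ and the notion of \emph{similar cycles} do the work: one must check that the triple-bond gadget, after removing $y$, still forces a unique cycle structure, and in particular rule out symmetric generators (such as $G_5$, or the level-3 $K_4$) where an automorphism might allow $x$ to be re-attached to a different edge of the same gadget. Handling these symmetric generators carefully — identifying for each the ``canonical'' leaf-bearing edge of the triple and using the labels of the leaves on $e_1,e_2,e_3$ to break the symmetry — is the part that requires the most care, but it is finite casework over the generators of Figure~\ref{fig:generators} since the level is bounded; for the general statement one argues abstractly that the three incident edges, being pairwise incident in a 3-regular multigraph, occupy a forced local structure and that the cycle-length counting is symmetry-free once leaf labels are taken into account.
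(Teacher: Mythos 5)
There is a genuine gap, and it starts in your very first sentence. In a $3$-regular multigraph, three pairwise incident edges do \emph{not} in general ``share exactly two common vertices'': the two generic configurations are (a) three edges all meeting at a single vertex $v$, with three distinct far endpoints, and (b) a triangle. A genuine ``triple bond'' (all three edges parallel between the same $u$ and $v$) forces $\{u,v\}$ to be a whole component, so it occurs only in the level-$2$ generator; for $k\ge 3$ at most two of the three edges can be parallel. Your parenthetical claim that, consulting Figure~\ref{fig:generators}, only generators containing a triple bond have three pairwise incident edges is plainly false --- every vertex of every generator (including $K_4$ and $K_{3,3}$, which have no multi-edges at all) supplies such a triple. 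As a result your case analysis omits precisely the two main cases. The paper's proof handles the star case by a direct positional argument: letting $a,b,c$ be the leaves closest to $v$ on the three edges, it reconstructs $N$ from $N_a$ by attaching $a$ to the edge incident to the vertex $v'$ that is identifiable (in $N_a$) as the common endpoint of the $b$-edge and the $c$-edge, placing $a$ as the leaf closest to $v'$; the triangle case is handled analogously, and the residual multi-edge case is reduced via Lemma~\ref{lem:bubble} to a single configuration. The only configuration you actually argue is (a fuzzy version of) that last, smallest case.

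Two further inaccuracies: the assertion that the total number of leaves on $e_1,e_2,e_3$ is ``either $3$ or $4$'' does not follow from the absence of $3$-chains --- two leaves on each of two adjacent generator edges create no $3$-chain because the generator vertex between them is not the neighbour of a leaf, so the total can be as large as $6$; and the concluding appeal to ``cycle-length bookkeeping as in Lemma~\ref{lem:bubble}'' is not a proof for the star and triangle configurations, where no multi-edge or forced short cycle is available and the argument must instead locate the attachment point of the deleted leaf relative to an identifiable generator vertex, as the paper does. Without the star and triangle cases the lemma is not established.
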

\begin{proof}
First suppose that all three edges are incident to some vertex~$v$ and the other three endpoints are all distinct. If each edge contains at least one leaf, let~$a,b,c$ be the leaves closest to~$v$ on each of the edges. Then~$N$ is~$\{a,d\}$-reconstructible for any~$d\in X\setminus \{a,b,c\}$, since we can reconstruct~$N$ from~$N_a$ by attaching~$a$ to the edge that is incident to \leo{the vertex~$v'$ that is incident to the~$b$-edge and to the $c$-edge, making~$a$ the leaf closest to~$v'$ on that edge}. Similarly, if one edge contains at least two leaves~$a,b$ and another edge at least one leaf~$c$, then~$N$ is again~$\{a,d\}$-reconstructible for any~$d\in X\setminus \{a,b,c\}$.

A similar argument can be used to handle the case that the three edges form a triangle.

Finally, suppose that at least two of the three edges are multi-edges. Then, by Lemma~\ref{lem:bubble}, exactly two of the three edges form multi-edges, one of them containing two leaves, the other one no leaves, and the third edge of the three pairwise incident edges contains \leo{at least} one leaf. Then again it can be seen that~$N$ has leaf-reconstruction number at most~2 by using a similar argument as above.
\end{proof}

\begin{theorem}\label{thm:lev4}
Any binary level-4 phylogenetic network with at least five leaves \leo{has leaf-reconstruction number at most~2}.
\end{theorem}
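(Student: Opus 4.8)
The plan is to peel off cases until only a small finite family of generators-with-leaves remains, and to check that family against Lemmas~\ref{lem:3chain}, \ref{lem:4cycle}, \ref{lem:bubble} and~\ref{lem:3incident}. First, if $N$ is decomposable then Corollary~\ref{cor:cutedge} already gives leaf-reconstruction number at most~2, so we may assume $N$ is not decomposable. Since $N$ is binary with at least five leaves it is not a star tree, so (by the discussion in Section~\ref{sec:preliminaries}) $N$ is simple, with a single blob, and its level $k$ satisfies $1\le k\le 4$. If $k=1$ the blob is a cycle with one pendant leaf at each vertex, so the cycle has length $|X|\ge 5\ge 3$ and $N$ contains a $3$-chain; then Lemma~\ref{lem:3chain} applies. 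So from now on $k\in\{2,3,4\}$; let $G$ be the underlying generator of $N$, so $|E(G)|=3k-3$ and $|V(G)|=2k-2$, and for an edge $e$ of $G$ write $\ell_e$ for the number of leaves of $N$ on $e$ and $\ell_v=\sum_{e\ni v}\ell_e$ for a vertex $v$ of $G$.

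Next I would run the following trichotomy. (a) If $\ell_e\ge 3$ for some edge $e$ of $G$, then $N$ has a $3$-chain and we are done by Lemma~\ref{lem:3chain}. (b) Otherwise, if $\ell_v\ge 3$ for some vertex $v$ of $G$, then the three edges incident with $v$ are pairwise incident and carry at least three leaves, so Lemma~\ref{lem:3incident} applies. (c) Otherwise $\ell_v\le 2$ for every vertex $v$ of $G$; since each edge is counted at both of its endpoints, $2|X|=\sum_e 2\ell_e=\sum_v \ell_v\le 2|V(G)|=4k-4$, i.e. $|X|\le 2k-2$. For $k=2$ and $k=3$ this contradicts $|X|\ge 5$, so those levels are completely settled, and for $k=4$ only the cases $|X|\in\{5,6\}$ survive.

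It therefore remains to treat simple binary level-$4$ networks with five or six leaves in which every edge of the generator carries at most two leaves and every generator vertex sees at most two leaves on its incident edges. There are only the five level-$4$ generators $G_1,\dots,G_5$ of Figure~\ref{fig:generators}, and, up to generator automorphism, only finitely many such leaf-distributions, so this is a finite verification. For the generators $G_1,G_2,G_3$, which contain a pair of multi-edges, most distributions are dispatched directly by Lemma~\ref{lem:bubble}; the remaining multi-edge distributions, as well as all distributions on the prism $G_4$ and on $K_{3,3}=G_5$, should be handled by exhibiting a cycle $C$ of $N$ through the neighbours of four leaves $a,b,c,d$ satisfying hypothesis (i) or (ii) of Lemma~\ref{lem:4cycle}. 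The useful observation here is that the number of vertices of $C$ in $N$ equals $|V(G)|$ plus the number of leaves on $C$, so any cycle that is \emph{similar} to $C$ passes through exactly the same generator vertices; given that leaves are so sparse when $|X|\le 6$, the required uniqueness of $C$ among similar cycles through three of $\{a,b,c,d\}$ can be read off from the (very small) cycle structure of each $G_i$.

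I expect the last step to be the main obstacle: although every individual case is routine, one must make sure that for \emph{every} sparse leaf-distribution on $G_1,\dots,G_5$ there really is a choice of cycle $C$ and of two cards $\{d,e\}$ meeting the hypotheses of Lemma~\ref{lem:4cycle} (or of Lemma~\ref{lem:bubble}); in some symmetric configurations — for instance one leaf on each triangle-edge of the prism $G_4$ — this forces one to use a Hamiltonian cycle of the generator rather than a short cycle, since the short cycles pass through too few leaf-neighbours. Should some sparse configuration resist Lemmas~\ref{lem:3chain}, \ref{lem:3incident}, \ref{lem:bubble} and~\ref{lem:4cycle} altogether, it would require a short ad hoc argument comparing two well-chosen cards directly, in the style of the proofs of those lemmas, but I do not anticipate this being necessary.
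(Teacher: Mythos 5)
Your overall strategy is the same as the paper's: reduce to simple networks via Corollary~\ref{cor:cutedge}, dispatch 3-chains with Lemma~\ref{lem:3chain}, and then analyse the finitely many generators using Lemmas~\ref{lem:bubble}, \ref{lem:3incident} and~\ref{lem:4cycle}. Your counting argument $2|X|=\sum_v \ell_v\le 2|V(G)|=4k-4$ is a genuine simplification for $k\in\{2,3\}$: it replaces the paper's explicit analysis of the level-2 and level-3 generators (in particular the $K_4$ case) by an immediate contradiction, and for $k=4$ it cleanly isolates $|X|\in\{5,6\}$. (One small caveat: for $k=2$ all three generator edges are pairwise parallel, so routing that case through Lemma~\ref{lem:3incident} leans on a statement whose proof implicitly assumes only one multi-edge pair; it is safer to invoke Lemma~\ref{lem:bubble} directly there, as the paper does.)

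The gap is in the $k=4$ endgame, which is where the paper spends almost all of its effort. You assert that every surviving sparse distribution on $G_1,\dots,G_5$ is handled by Lemma~\ref{lem:bubble} or by exhibiting a cycle satisfying hypothesis (i) or (ii) of Lemma~\ref{lem:4cycle}, and you ``do not anticipate'' needing anything else. The paper's proof shows that this expectation fails: for the prism $G_4$ and for $K_{3,3}=G_5$ several configurations are \emph{not} instances of Lemma~\ref{lem:4cycle}, which requires four leaf-neighbours on a single cycle together with a uniqueness-among-similar-cycles hypothesis. With only five or six leaves, at most two per generator vertex, such a cycle often does not exist, and the paper instead argues directly: it identifies the unique Hamiltonian cycle of the generator through the $a$-edge and the $b$-edge (respectively one of exactly two such cycles in $K_{3,3}$, chosen to contain or avoid a specified leaf-edge) and reconstructs a third leaf $c$ by attaching it to the $i$-th edge of a path along that cycle --- an argument in the style of, but not an application of, Lemma~\ref{lem:4cycle} --- falling back on Lemma~\ref{lem:3incident} in the degenerate sub-cases where the remaining leaves all avoid the chosen cycle. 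Until you enumerate the sparse distributions on $G_4$ and $G_5$ and supply these bespoke arguments, the level-4 case is not established; this case analysis is the substance of the theorem rather than a routine verification.
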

\begin{proof}
Let~$N$ be such a network. By Corollary~\ref{cor:cutedge}, we may assume that~$N$ has no nontrivial cut-edges, i.e.~$N$ is simple.

If~$N$ is a simple level-1 network, pick any two~$x,y$ that are distance at least~4 apart. The fact that~$N$ is simple is $\{x,y\}$-recognizable. Moreover, using the fact that~$N$ has at least five leaves, it can easily be shown that~$N$ can be uniquely reconstructed from~$N_x$ and~$N_y$.

Now suppose that~$N$ is a simple level-$k$ network, with~$k\geq 2$.

If~$N$ has a 3-chain~$(x,y,z)$ and~$a\in X\setminus\{x,y,z\}$, then any $\{y,a\}$-reconstruction of~$N$ is simple. Moreover, by the proof of Lemma~\ref{lem:3chain} it can be concluded that~$N$ is $\{y,a\}$-reconstructible. Hence, we may assume that~$N$ contains no 3-chains.

If~$k=2$, then, considering the unique level-2 generator in Figure~\ref{fig:generators}, we are done by Lemma~\ref{lem:bubble}.

If~$k=3$, then there are two possible underlying generators, see Figure~\ref{fig:generators}. First suppose the underlying generator~$G$ is not~$K_4$ and thus has two pairs of multi-edges. Then, by Lemma~\ref{lem:bubble}, \leo{we may assume that} each pair of multi-edges has one edge containing exactly two leaves. Hence, we are done by Lemma~\ref{lem:4cycle}~(i). Now suppose that~$G=K_4$. Since~$|X|\geq 5$, it is straightforward to check that at least one 3-cycle~$C$ of~$G$ contains at least three leaves in~$N$. By Lemma~\ref{lem:4cycle}, it contains exactly~3 leaves. There are two cases (by Lemma~\ref{lem:3chain}). Either each edge of~$C$ contains exactly one leaf, or one edge contains two leaves and one edge one leaf. In either case, it is easy to check that wherever the other two leaves are, we can apply Lemma~\ref{lem:4cycle} to see that~$N$ has reconstruction number at most~2.

Finally, suppose~$k=4$. Then there are \leo{five} possibilities for the underlying generator~$G$, see Figure~\ref{fig:generators}. If~\leo{$G\in\{G_1,G_2,G_3\}$} then, by Lemma~\ref{lem:bubble}, each pair of multi-edges has one edge containing exactly two leaves and one edge containing no leaves. If~$G=G_1$ or~$G_3$, then we are done by Lemma~\ref{lem:4cycle}~(i). If~\leo{$G=G_2$}, then it is straightforward to check that, since~$|X|\geq 5$, there must exist some cycle that satisfies the condition of Lemma~\ref{lem:4cycle}~(ii).

Now suppose that~\leo{$G=G_4$}. Observe that~$G_4$ consists of two disjoint 3-cycles and three other edges, which we will call the \emph{middle edges}. For every vertex of~$G_4$, at most two edges incident to this vertex contain leaves by Lemma~\ref{lem:3incident}. Since~$|X|\geq 5$, it is straightforward to check that there is at least one vertex~$v$ of~$G_4$ with exactly two leaves~$a,b$ on the edges incident to~$v$.

First assume that~$a$ is on a middle edge and~$b$ is on a triangle edge. Then there is a unique Hamiltonian cycle~$C$ of~$G$ containing the~$a$-edge and the~$b$-edge. First suppose that there is at least one leaf~$c\in X\setminus \{a,b\}$ on an edge of~$C$. Assume that~$c$ is the first such leaf on the path along~$C$ between the neighbour of~$b$ and the neighbour of~$a$ not containing~$v$. Let~$i$ be the distance from the neighbour of~$b$ to the neighbour of~$c$ on this path. Let~$d\in X\setminus \{a,b,c\}$. Then $N$ is $\{c,d\}$-reconstructible, since the unique way to insert~$c$ into~$N_c$ is by attaching it to the $i$-th edge of the path along~$C$ from the neighbour of~$b$ to the neighbour of~$a$ not containing~$v$. Now suppose that none of the leaves in~$X\setminus\{a,b\}$ are on edges of~$C$. By Lemma~\ref{lem:3incident} there are no leaves on the third edge incident to~$v$. Hence, since~$|X|\geq 5$, there at least three leaves on the two edges of~$G$ that are not on~$C$ and not incident to~$v$. It is now straightforward to check that~$N$ has reconstruction number~2 by Lemma~\ref{lem:4cycle}~(i).

Now assume that~$a$ and~$b$ are both on the same triangle-edge. Then, if the previous case is not applicable for any vertex~$v'$ of~$G_4$, the only remaining possibility is that the other triangle also has an edge containg two leaves and we can apply Lemma~\ref{lem:4cycle}.

Now assume that~$a$ and~$b$ are on different triangle edges (of the same triangle). Then, if the previous cases are not applicable, all other leaves must be on the other triangle and we can use Lemma~\ref{lem:3incident}.

Finally, assume that~$a$ and~$b$ are both on the same middle edge. Then, if the previous cases are not applicable, the only remaining possibility is that some other middle edge also contains two leaves and we can apply Lemma~\ref{lem:4cycle}.

Now consider the last level-4 generator~\leo{$G_5=K_{3,3}$}. As before, it is straightforward to check that there is at least one vertex~$v$ of~$G_5$ with exactly two leaves~$a,b$ on the edges incident to~$v$.

First suppose that~$a$ and~$b$ are on different edges incident to~$v$. Observe that there are precisely two Hamiltonian cycles~$C$ and~$D$ of~$G_5$ containing the $a$-edge and the $b$-edge. Since each leaf is on an edge of at least one of~$C$ and~$D$, at least one edge of~$C$ and~$D$ contains a third leaf~$c\in X\setminus\{a,b\}$. Suppose that~$c$ is on an edge of~$C$. First suppose that all leaves are on edges of~$C$. Then we can use a similar argument as for the Hamiltonian cycle in~$G_4$ to show that~$N$ is $\{c,d\}$-reconstructible, for some~$d\in X\setminus\{a,b,c\}$. If at least one leaf~$e\in X\setminus\{a,b,c\}$ is on an edge that is not also on~$D$, then we choose the Hamiltonian cycle containing the~$e$-edge, and choose~$d\neq e$. Otherwise, all leaves are also on edges of~$D$. Observet that there are precisely four edges that are on both~$C$ and~$D$, which are two pairs of incident edges. Since~$|X|\geq 5$, it then follows by Lemma~\ref{lem:3incident} that~$N$ has leaf-reconstruction number~2. Now suppose that at least one leaf~$e\in X\setminus\{a,b,c\}$ is not on an edge of~$C$. Then~$N$ is $\{c,d\}$-reconstructible, with~$d\in X\setminus\{a,b,c,e\}$, again using a similar argument as for the Hamiltonian cycle in~$G_4$, choosing the Hamiltonian cycle of~$G$ not containing the~$e$-edge.

Finally, suppose that~$a$ and~$b$ are on the same edge incident to~$v$. Then, if the previous case is not applicable for any vertex~$v'$ of~$G_5$, the only remaining possibility is that there is some other edge of~$G_5$ containing two leaves and we can apply Lemma~\ref{lem:4cycle}~(ii). 
\end{proof}

\section{Reconstructing networks from quarnets}\label{sec:quarnets}

We have focussed so far on reconstructing networks from their $X$-deck. We could try to use a recursive argument in order to reconstruct networks from smaller subnetworks, with less than $|X|-1$ leaves. However, this approach does not work in general 
since there are networks for which no elements of its $X$-deck 
are phylogenetic networks, see Figure~\ref{fig:phylodeck}. Nevertheless, it is possible to apply a recursive approach if we use the following variant of the $X$-deck of a network.

\begin{definition}
Given a phylogenetic network~$N$ on~$X$ and a leaf~$x\in X$, the phylogenetic 
network~$N^\cP_x$ is the result of deleting leaf~$x$ from~$N$, together with its incident \leo{edge}, and applying the following \leo{three} operations until none is applicable:
\begin{itemize}
\item[(i)] suppress a degree-2 vertex;
\item[(ii)] replace a pair of multi-edges by a single edge;
\item[(iii)] collapse a \leo{blob} with precisely two incident cut-edges into a single vertex.
\end{itemize}
Given a phylogenetic network~$N$ on~$X$ and~$X'\subseteq X$, the \emph{phylogenetic} $X'$-\emph{deck} of~$N$ is the set $\{{N}^\cP_x \mid x\in X'\}$.
\end{definition}

See again Figure~\ref{fig:phylodeck} for an example.
Note that this form of leaf-deletion was introduced for directed level-1 phylogenetic networks in \cite{huber2011encoding} --
see also \cite{information} for more details for general phylogenetic networks. 

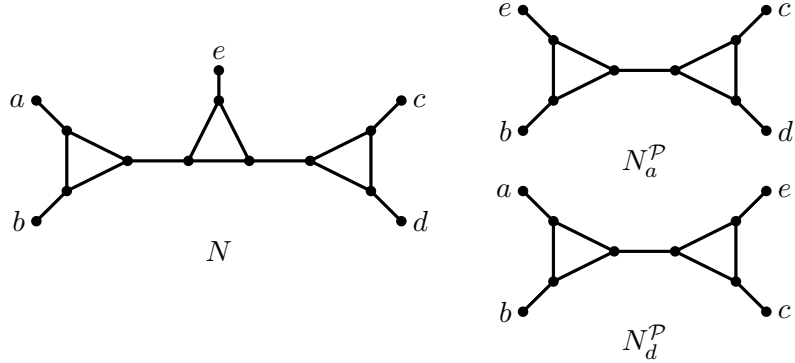
\begin{figure}
\centering
 \begin{tikzpicture}[scale=.8]
	 \tikzset{lijn/.style={very thick}}
	 \begin{scope}[xshift=0cm,yshift=0cm]
	\draw[very thick, fill, radius=0.06] (-.5,-.5) circle node[left] {$b$};
	\draw[very thick, fill, radius=0.06] (-.5,1.5) circle node[left] {$a$};
	\draw[very thick, fill, radius=0.06] (2.5,2) circle node[above] {$e$};
	\draw[very thick, fill, radius=0.06] (5.5,1.5) circle node[right] {$c$};
	\draw[very thick, fill, radius=0.06] (5.5,-.5) circle node[right] {$d$};
	\draw[very thick, fill, radius=0.06] (0,0) circle;
	\draw[very thick, fill, radius=0.06] (0,1) circle;
	\draw[very thick, fill, radius=0.06] (1,0.5) circle;
	\draw[very thick, fill, radius=0.06] (2,0.5) circle;
	\draw[very thick, fill, radius=0.06] (3,0.5) circle;
	\draw[very thick, fill, radius=0.06] (2.5,1.5) circle;
	\draw[very thick, fill, radius=0.06] (4,0.5) circle;
	\draw[very thick, fill, radius=0.06] (5,0) circle;
	\draw[very thick, fill, radius=0.06] (5,1) circle;
	\draw[lijn] (-0.5,-0.5) -- (0,0);
	\draw[lijn] (0,1) -- (0,0);
	\draw[lijn] (1,0.5) -- (0,0);
	\draw[lijn] (-0.5,1.5) -- (0,1);
	\draw[lijn] (1,0.5) -- (2,0.5);
	\draw[lijn] (2,0.5) -- (3,0.5);
	\draw[lijn] (2,0.5) -- (2.5,1.5);
	\draw[lijn] (2.5,1.5) -- (2.5,2);
	\draw[lijn] (3,0.5) -- (4,0.5);
	\draw[lijn] (4,0.5) -- (5,0);
	\draw[lijn] (4,0.5) -- (5,1);
	\draw[lijn] (5,0) -- (5,1);
	\draw[lijn] (5,0) -- (5.5,-0.5);
	\draw[lijn] (5,1) -- (5.5,1.5);
	\draw[lijn] (2.5,1.5) -- (3,0.5);
	\draw[lijn] (0,1) -- (1,0.5);
	\draw (2.5,-1) node {$N$};
	\end{scope}

\begin{scope}[xshift=8cm,yshift=1.5cm]
	\draw[very thick, fill, radius=0.06] (-.5,-.5) circle node[left] {$b$};
	\draw[very thick, fill, radius=0.06] (-.5,1.5) circle node[left] {$e$};
	\draw[very thick, fill, radius=0.06] (3.5,1.5) circle node[right] {$c$};
	\draw[very thick, fill, radius=0.06] (3.5,-.5) circle node[right] {$d$};
	\draw[very thick, fill, radius=0.06] (0,0) circle;
	\draw[very thick, fill, radius=0.06] (0,1) circle;
	\draw[very thick, fill, radius=0.06] (1,0.5) circle;
	\draw[very thick, fill, radius=0.06] (2,0.5) circle;
	\draw[very thick, fill, radius=0.06] (3,0) circle;
	\draw[very thick, fill, radius=0.06] (3,1) circle;
	\draw[lijn] (-0.5,-0.5) -- (0,0);
	\draw[lijn] (0,1) -- (0,0);
	\draw[lijn] (1,0.5) -- (0,0);
	\draw[lijn] (-0.5,1.5) -- (0,1);
	\draw[lijn] (1,0.5) -- (2,0.5);
	\draw[lijn] (2,0.5) -- (3,0);
	\draw[lijn] (2,0.5) -- (3,1);
	\draw[lijn] (3,0) -- (3,1);
	\draw[lijn] (3,0) -- (3.5,-0.5);
	\draw[lijn] (3,1) -- (3.5,1.5);
	\draw[lijn] (0,1) -- (1,0.5);
	\draw (1.5,-1) node {$N_a^\cP$};
	\end{scope}
	
	\begin{scope}[xshift=8cm,yshift=-1.5cm]
	\draw[very thick, fill, radius=0.06] (-.5,-.5) circle node[left] {$b$};
	\draw[very thick, fill, radius=0.06] (-.5,1.5) circle node[left] {$a$};
	\draw[very thick, fill, radius=0.06] (3.5,1.5) circle node[right] {$e$};
	\draw[very thick, fill, radius=0.06] (3.5,-.5) circle node[right] {$c$};
	\draw[very thick, fill, radius=0.06] (0,0) circle;
	\draw[very thick, fill, radius=0.06] (0,1) circle;
	\draw[very thick, fill, radius=0.06] (1,0.5) circle;
	\draw[very thick, fill, radius=0.06] (2,0.5) circle;
	\draw[very thick, fill, radius=0.06] (3,0) circle;
	\draw[very thick, fill, radius=0.06] (3,1) circle;
	\draw[lijn] (-0.5,-0.5) -- (0,0);
	\draw[lijn] (0,1) -- (0,0);
	\draw[lijn] (1,0.5) -- (0,0);
	\draw[lijn] (-0.5,1.5) -- (0,1);
	\draw[lijn] (1,0.5) -- (2,0.5);
	\draw[lijn] (2,0.5) -- (3,0);
	\draw[lijn] (2,0.5) -- (3,1);
	\draw[lijn] (3,0) -- (3,1);
	\draw[lijn] (3,0) -- (3.5,-0.5);
	\draw[lijn] (3,1) -- (3.5,1.5);
	\draw[lijn] (0,1) -- (1,0.5);
	\draw (1.5,-1) node {$N_d^\cP$};
	\end{scope}
	\end{tikzpicture}
\caption{\label{fig:phylodeck} An example of a level-1 phylogenetic network~$N$ \leo{on~$X$} such that no elements of its \leo{$X$-}deck are phylogenetic networks. Nevertheless, it is possible to reconstruct~$N$ from the 
quarnets~$N_a^\cP$ and~$N_d^\cP$.}
\end{figure}

All elements of a phylogenetic $X$-deck are phylogenetic networks by the following observation, which is easily verified.

\begin{observation}
Let~$N$ be a phylogenetic network~$N$ on~$X$, \leo{with~$|X|\geq 3$}, and~$x\in X$. Then~${N}^\cP_x$ is a phylogenetic network on~$X\setminus\{x\}$.
\end{observation}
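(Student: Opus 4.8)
The plan is to unwind the definition of $N^{\cP}_x$ and verify the three things demanded of a phylogenetic network on $X\setminus\{x\}$: that it is connected, that its leaves are bijectively labelled by $X\setminus\{x\}$, and that contracting its blobs yields a phylogenetic tree on $X\setminus\{x\}$. Write $M$ for the graph produced by the procedure (delete $x$ and its incident edge from $N$, then apply (i), (ii), (iii) until none applies). The strategy is to track these properties, together with one auxiliary invariant, through the reduction.

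First I would dispose of the routine parts. Deleting $x$ together with its pendant edge from the connected graph $N$ leaves a connected graph; the neighbour of $x$ has degree at least $3$ in $N$, hence degree at least $2$ afterwards, so no new leaf is created and the leaf-set becomes exactly $X\setminus\{x\}$, which has at least two elements because $|X|\geq 3$ (this is where the hypothesis is used, and it is also what is needed for ``phylogenetic network on $X\setminus\{x\}$'' to be meaningful). Each of (i), (ii), (iii) is a local contraction or identification, so it keeps the graph connected; none of them is ever applied at a degree-$1$ vertex, so each preserves the leaf-set; and each strictly decreases the number of edges, so the procedure terminates. Hence $M$ is a connected multigraph whose leaves are bijectively labelled by $X\setminus\{x\}$, with at least two leaves.

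Next I would characterise the terminal object. By maximality of the reduction, $M$ has no degree-$2$ vertex, no pair of parallel edges, and no blob with exactly two incident cut-edges. To conclude that $M$ is a phylogenetic network it suffices to show in addition that every blob of $M$ has at least three incident cut-edges, for then contracting all blobs of $M$ gives a graph whose edge-set is precisely the set of cut-edges of $M$: this graph is connected and its edges form a forest, hence it is a tree; a blob contracts to a vertex of degree at least $3$, a non-blob internal vertex of $M$ has all its incident edges being cut-edges and has degree at least $3$, and its degree-$1$ vertices are exactly the leaves of $M$, so this tree is a phylogenetic tree on $X\setminus\{x\}$. A blob with zero incident cut-edges would be an entire connected component and, having no leaves, would contradict that $M$ is connected with at least two leaves, so the remaining point is to rule out a blob with exactly one incident cut-edge. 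For this I would carry the invariant ``every blob has at least two incident cut-edges'' through the whole procedure: it holds just after deleting $x$, because every blob of the phylogenetic network $N$ has at least three incident cut-edges (it contracts to an internal vertex of $T(N)$) and deleting the pendant edge at $x$ removes one incident cut-edge from at most one blob; and I would check it is preserved by each operation, since (i) and (ii) only modify the interior of a blob or merge two cut-edges into a single one with the same two endpoints, while (iii) deletes a blob having exactly two incident cut-edges and replaces them by one cut-edge, in neither case lowering any surviving blob's count below two.

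The step I expect to be the main obstacle is precisely the bookkeeping in the previous paragraph: verifying that the operations cannot create a degenerate configuration and that the invariant really does survive every reduction. The delicate cases are when (i) is applied to a degree-$2$ vertex lying inside a blob --- one must check that this cannot create a loop, and that it cannot, in combination with a subsequent application of (ii), strand an unlabelled degree-$1$ vertex or lower a blob's incident-cut-edge count. Here the useful observations are that while no parallel edges are present every degree-$2$ vertex has two distinct neighbours, so (i) never produces a loop, and that the kinds of small sub-blobs (short cycles, multi-edges) that could otherwise cause trouble are constrained by the structure of a phylogenetic network. Turning this into a clean argument requires a short case analysis of which sub-configurations a phylogenetic network can contain; this is the only part of the proof that is not immediate, and everything else is the straightforward verification sketched above.
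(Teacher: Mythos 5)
The paper offers no argument for this observation---it is introduced with ``which is easily verified''---so there is no proof of record to compare you against; I can only assess your proposal on its own terms. Its architecture is right: reducing the claim to ``the terminal graph has no loops and no unlabelled vertices of degree at most one'', and carrying the invariant that every blob retains at least two incident cut-edges, is exactly the correct skeleton, and your verification of that invariant is sound. The difficulty is the step you defer, the ``short case analysis'' excluding loops and stranded unlabelled degree-$1$ vertices. In the generality in which both the observation and your sketch are phrased (arbitrary, possibly non-binary, phylogenetic networks, with the three operations applied in an arbitrary order) that case analysis cannot be completed, because the configuration you need to rule out does occur. Take $N$ with internal vertices $v,a,b,c$, edges $va$, $vb$, $ab$, $ac$, $bc$, leaf $x$ attached at $v$ and leaves $p,q$ attached at $c$: this is a legitimate phylogenetic network on $\{x,p,q\}$ (one blob with three incident cut-edges, contracting it gives a star). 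Delete $x$, apply (i) at $v$ and then (ii) to the resulting parallel pair between $a$ and $b$; this leaves the triangle $abc$ with $\deg a=\deg b=2$ and $\deg c=4$. Applying (i) at $a$ now creates a parallel pair between $b$ and $c$ with $\deg b=2$, after which (ii) strands $b$ as an unlabelled degree-$1$ vertex (while (i) at $b$ would create a loop at $c$); the terminal graph is a star on $\{b,p,q\}$ with $b$ unlabelled, which is not a phylogenetic network on $\{p,q\}$. A different order---applying (iii) to the triangle $abc$---does give the correct answer, so the reduction is not even confluent as written; your argument would have to address this, and as it stands the assertion that the troublesome sub-blobs ``are constrained by the structure of a phylogenetic network'' is simply not true in this generality.

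Two remarks on repairing the gap. First, your own invariant does close it for \emph{binary} networks: there no internal degree ever exceeds $3$ during the reduction, so a degree-$2$ vertex both of whose edges go to the same neighbour, a pendant triple edge, or a triangle with two degree-$2$ vertices would each constitute a blob all of whose incident cut-edges lie at a single vertex of degree at most $3$, hence a blob with at most one incident cut-edge, contradicting the invariant you maintain; since a loop or an unlabelled leaf can only be produced from such a configuration, none ever arises. Making this explicit turns your sketch into a complete proof in the binary case, and that is the case the paper actually needs for its quarnet results. Second, for non-binary networks the statement can only be rescued by constraining the procedure---for instance by giving (ii) and (iii) priority over (i), applying (i) only at degree-$2$ vertices with two distinct neighbours, and applying (ii) only when neither endpoint has degree $2$---and then re-running your invariant argument for that restricted reduction. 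Neither the paper's definition nor your proposal does this, so the deferred case analysis is a genuine gap rather than a routine verification.
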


This opens the door to reconstructing networks from smaller subnetworks. A \emph{quarnet} is a phylogenetic network with precisely four leaves. The set of quarnets~$Q(N)$ of a phylogenetic network~$N$ on~$X$ is defined recursively by $Q(N)=\{N\}$ if~$|X|=4$ and 
\[Q(N)=\bigcup_{x\in X} Q({N}^\cP_x) \quad \text{if}~|X|\geq 5.
\]

\leo{Here, the union operation keeps one phylogenetic network from each group of equivalent phylogenetic networks. We say that two sets~$\cN,\cN'$ of phylogenetic networks are \emph{equivalent}, denoted $\cN\sim\cN'$, if there exists a bijection~$f:\cN\rightarrow \cN'$ with~$N\sim f(N)$ for all~$N\in\cN$.}

We say that a network~$N$ is \emph{reconstructible from its quarnets} if every phylogenetic network~$N'$ with $Q(N)\leo{\sim}Q(N')$ is equivalent to~$N$. Moreover, a class~$\cN$ of phylogenetic networks is \emph{quarnet\leo{-}reconstructible} if each~$N\in\cN$ is reconstructible from its quarnets.

Similarly,~$N$ is \emph{reconstructible from its phylogenetic~$X$-deck} if every phylogenetic network~$N'$, \leo{whose phylogenetic~$X$-deck is equivalent to the} phylogenetic~$X$-deck \leo{of}~$N$, is equivalent to~$N$. Moreover, a class~$\cN$ of phylogenetic networks is \emph{phylogenetically reconstructible} if each~$N\in\cN$ is reconstructible from its phylogenetic~$X$-deck.

\leo{If two phylogenetic networks on~$X$ have equivalent $X$-decks, then they have equivalent phylogenetic $X$-decks (but not conversely, see Figure~\ref{fig:phyloXdeck}). Consequently, if a phylogenetic network on~$X$ is reconstructible from its phylogenetic $X$-deck, then it is $X$-reconstructible. The following proposition, which shows that the converse is also true in some cases, will permit us to apply results from previous sections.}

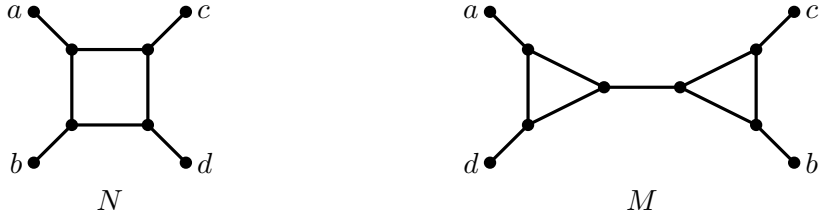
\begin{figure}
\centering
 \begin{tikzpicture}
	 \tikzset{lijn/.style={very thick}}
	\begin{scope}[xshift=0cm,yshift=0cm]
	\draw[very thick, fill, radius=0.06] (0.5,0.5) circle node[left] {$b$};
	\draw[very thick, fill, radius=0.06] (2.5,0.5) circle node[right] {$d$};
	\draw[very thick, fill, radius=0.06] (1,1) circle;
	\draw[very thick, fill, radius=0.06] (2,1) circle;
	\draw[very thick, fill, radius=0.06] (1,2) circle;
	\draw[very thick, fill, radius=0.06] (2,2) circle;
	\draw[very thick, fill, radius=0.06] (0.5,2.5) circle node[left] {$a$};
	\draw[very thick, fill, radius=0.06] (2.5,2.5) circle node[right] {$c$};
	\draw[lijn] (0.5,0.5) -- (1,1);
	\draw[lijn] (0.5,2.5) -- (1,2);
	\draw[lijn] (1,1) -- (2,1);
	\draw[lijn] (1,2) -- (2,2);
	\draw[lijn] (1,1) -- (1,2);
	\draw[lijn] (2,1) -- (2,2);
	\draw[lijn] (2,1) -- (2.5,0.5);
	\draw[lijn] (2,2) -- (2.5,2.5);
	\draw (1.5,0) node {$N$};
	\end{scope}
	
	\begin{scope}[xshift=6cm,yshift=0cm]
	\draw[very thick, fill, radius=0.06] (0.5,0.5) circle node[left] {$d$};
	\draw[very thick, fill, radius=0.06] (4.5,0.5) circle node[right] {$b$};
	\draw[very thick, fill, radius=0.06] (1,1) circle;
	\draw[very thick, fill, radius=0.06] (2,1.5) circle;
	\draw[very thick, fill, radius=0.06] (1,2) circle;
	\draw[very thick, fill, radius=0.06] (3,1.5) circle;
	\draw[very thick, fill, radius=0.06] (4,1) circle;
	\draw[very thick, fill, radius=0.06] (4,2) circle;
	\draw[very thick, fill, radius=0.06] (0.5,2.5) circle node[left] {$a$};
	\draw[very thick, fill, radius=0.06] (4.5,2.5) circle node[right] {$c$};
	\draw[lijn] (0.5,0.5) -- (1,1);
	\draw[lijn] (0.5,2.5) -- (1,2);
	\draw[lijn] (1,1) -- (1,2);
	\draw[lijn] (1,1) -- (2,1.5);
	\draw[lijn] (1,2) -- (2,1.5);
	\draw[lijn] (2,1.5) -- (3,1.5);
	\draw[lijn] (3,1.5) -- (4,1);
	\draw[lijn] (3,1.5) -- (4,2);
	\draw[lijn] (4,1) -- (4,2);
	\draw[lijn] (4,1) -- (4.5,0.5);
	\draw[lijn] (4,2) -- (4.5,2.5);
	\draw (2.5,0) node {$M$};
	\end{scope}

		\end{tikzpicture}
\caption{\label{fig:phyloXdeck} \leo{Two phylogenetic networks that have the same phylogenetic $X$-deck but not the same $X$-deck (even though the $X$-deck and phylogenetic $X$-deck of~$N$ are equivalent). Network~$N$ is neither $X$-reconstructible nor reconstructible from its phylogenetic $X$-deck, while~$M$ is $X$-reconstructible but not reconstructible from its phylogenetic $X$-deck.}}
\end{figure}

\begin{proposition}\label{prop:phylodeck}
\leo{Let~$N$ be a phylogenetic network on~$X$ with~$|X|\geq 4$.  If~$N$ is $Y$-reconstructible for some~$Y\subseteq X$ with~$|Y|\geq 2$ and~$N^\cP_y\sim N_y$ for all~$y\in Y$, then~$N$ is reconstructible from its phylogenetic~$X$-deck.}
\end{proposition}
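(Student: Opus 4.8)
The plan is to take an arbitrary phylogenetic network $N'$ on $X$ whose phylogenetic $X$-deck is equivalent to that of $N$, and to show $N'\sim N$ by proving that $N'$ is in fact a $Y$-reconstruction of $N$ and then invoking the hypothesis that $N$ is $Y$-reconstructible. The first remark is that the elements of a phylogenetic $X$-deck have pairwise distinct leaf-label sets (the element $N^{\cP}_x$ has leaf set $X\setminus\{x\}$), so an equivalence between the two decks must match $N'^{\cP}_x$ with $N^{\cP}_x$ for each $x$; hence $N'^{\cP}_x\sim N^{\cP}_x$ for every $x\in X$, and in particular $N'^{\cP}_y\sim N^{\cP}_y\sim N_y$ for every $y\in Y$.

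Next I would set up some bookkeeping with the reticulation number, written $r(\cdot)$. Deleting a leaf and suppressing the resulting degree-$2$ vertex leaves $r(\cdot)$ unchanged, whereas each of operations (ii) and (iii) strictly decreases it, and operation (i) can never be the first operation applied to a pseudo-network, since a pseudo-network has no degree-$2$ vertex. Consequently $r(N'^{\cP}_x)\le r(N'_x)=r(N')$ for every leaf $x$, with equality precisely when $N'_x$ is already fully reduced, i.e. $N'_x\sim N'^{\cP}_x$. On the other hand, the hypothesis $N^{\cP}_y\sim N_y$ says exactly that $N_y$ is fully reduced, so $r(N'^{\cP}_y)=r(N^{\cP}_y)=r(N_y)=r(N)$. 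Combining these facts, $r(N')\ge r(N)$, and $r(N')=r(N)$ holds if and only if $N'_y\sim N_y$ for all $y\in Y$; so everything comes down to proving $r(N')=r(N)$.

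Ruling out $r(N')>r(N)$ is the heart of the argument and the main obstacle. Suppose it fails. Then applying $r(N'^{\cP}_x)=r(N^{\cP}_x)\le r(N_x)=r(N)<r(N')=r(N'_x)$ to \emph{every} leaf $x$ shows that $N'_x$ is never fully reduced; so for each $x$ the neighbour of $x$ in $N'$ lies on a blob whose reticulation number drops when $x$ is deleted, which tightly constrains the local structure of $N'$ at every leaf (roughly: that blob either has exactly three incident cut-edges, one being the pendant edge of $x$, or contains a triangle through the neighbour of $x$). Now fix $y\in Y$ with blob $B_y$. Using $|X|\ge 4$, either $N'$ is one of a handful of small networks for which one checks directly that some $N'_x$ \emph{is} fully reduced --- a contradiction --- or there is a leaf $z\in X\setminus\{y\}$ lying off $B_y$ such that deleting $z$ leaves $B_y$, together with the pendant edge of $y$ attached to it, intact. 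In the latter case $B_y$ survives into $N'^{\cP}_z$, hence into $N^{\cP}_z\sim N'^{\cP}_z$; and since forming $N^{\cP}_z$ from $N$ (delete leaf $z$, then reduce) never creates a blob or raises a blob's reticulation, $N$ itself must carry the leaf $y$ on a blob with the same constrained local structure. But then $N_y$ admits operation (ii) or (iii) and so is not fully reduced, contradicting $N^{\cP}_y\sim N_y$. Making this last step airtight --- in particular pushing the multi-edge case through operation (ii), and controlling the cascade of reductions that deleting $z$ may trigger elsewhere in $N'$ --- is the technical crux.

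Once $r(N')=r(N)$ is established we have $N'_y\sim N'^{\cP}_y\sim N_y$ for all $y\in Y$, so $N'$ and $N$ have the same $Y$-deck; a further short argument (comparing sizes of $N'_y$ and $N_y$ for $y\in Y$, and using the equivalences $N'^{\cP}_x\sim N^{\cP}_x$ to rule out a discrepancy of one vertex) gives $|V(N')|=|V(N)|$, so after relabelling internal vertices we may take $V(N')=V(N)$. Then $N'$ is a $Y$-reconstruction of $N$, and $Y$-reconstructibility of $N$ yields $N'\sim N$, as required.
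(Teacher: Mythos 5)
Your overall strategy matches the paper's: reduce to showing that any $N'$ with an equivalent phylogenetic $X$-deck satisfies $N'_y\sim N_y$ for all $y\in Y$, and drive this with reticulation-number accounting. Your opening bookkeeping (deck elements are matched by their leaf sets; operation (i) cannot act first on a pseudo-network; $r(N'^{\cP}_x)\le r(N'_x)$ with equality iff $N'_x$ is fully reduced; hence $r(N')\ge r(N)$ with equality iff $N'_y\sim N_y$ for all $y\in Y$) is correct and is essentially the same mechanism the paper uses, phrased contrapositively there.

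The gap is exactly where you flag it, and the route you sketch for closing it does not work. You argue that the reducible blob $B_y$ at $y$ in $N'$ survives into $N'^{\cP}_z\sim N^{\cP}_z$, and that since forming $N^{\cP}_z$ ``never creates a blob or raises a blob's reticulation,'' $N$ itself must carry $y$ on a blob with the same reducible local structure, contradicting $N^{\cP}_y\sim N_y$. That implication is false: deleting $z$ from $N$ and reducing can \emph{create} a triangle through the neighbour of $y$ that is absent in $N$ (for instance from a 4-cycle containing the neighbours of both $y$ and $z$), and it can lower the number of cut-edges incident to $y$'s blob from four to three if $z$'s pendant edge was one of them. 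So the presence of the ``reducible at $y$'' configuration in $N^{\cP}_z$ does not pull back to $N$. The paper confronts exactly this obstruction directly in $N'$ rather than pulling back: it shows the neighbours of both $y$ and $z$ lie in triangles of $N'$, splits into the cases where these are the same triangle or different triangles, and in each case inspects $N'^{\cP}_x\sim N^{\cP}_x$ and $N'^{\cP}_p\sim N^{\cP}_p$ for two further leaves $x,p$; reconciling the triangles seen there with the fact that neither the neighbour of $y$ nor that of $z$ is in a triangle in $N$ (which is what $N^{\cP}_y\sim N_y$ and $N^{\cP}_z\sim N_z$ give you) forces $X=\{x,y,z,p\}$ and $N$ to be a simple level-1 network, which is excluded because such networks are not $X$-reconstructible. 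Some case analysis of this kind, not a transfer of blob structure from $N^{\cP}_z$ back to $N$, is what is needed to finish your argument; as written, the proof is incomplete at its central step.
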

\begin{proof}
\leo{Suppose that there exists a network~$M$ that is not equivalent to~$N$ but has an equivalent phylogenetic~$X$-deck. Since~$N$ is $Y$-reconstructible, there exists a~$y\in Y$ such that $N_y\not\sim M_y$. Since $M^\cP_y\sim N^\cP_y \sim N_y$, it follows that $M^\cP_y\not\sim M_y$ and hence that the neighbour of~$y$ in~$M$ is in a triangle. Moreover, since~$N_y$ has the same reticulation number as~$N$,~$M^\cP_y$ also has the same reticulation number as~$N$. Since, in~$M$, the neighbour of~$y$ is in a triangle,~$M$ has a higher reticulation number than~$M^\cP_y$ and~$N$. Take any~$z\in Y\setminus\{y\}$. Then, since $M^\cP_z\sim N^\cP_z \sim N_z$, $M^\cP_z$ has the same reticulation number as~$N$ and $M^\cP_y$ and hence a lower reticulation number than~$M$. It follows that the neighbour of~$z$ in~$M$ is also in a triangle. We distingish two cases.}

\leo{First assume that the neighbours of~$y$ and~$z$ are both in the same triangle in~$M$. Consider any two leaves~$x,p\in X\setminus\{y,z\}$. Then, the neighbours of~$y$ and~$z$ are together in the same triangle in~$M^\cP_x\sim N^\cP_x$ and in~$M^\cP_p\sim N^\cP_p$. On the other hand, neither of the neighbours of~$y$ and~$z$ is in a triangle in~$N$, since $N^\cP_z\sim N_z$ and $N^\cP_y\sim N_y$. This is only possible when~$N$ is a simple level-1 network on $X=\{x,y,z,p\}$. This contradicts the assumption that~$N$ is $Y$-reconstructible, with~$Y\subseteq X$, and hence~$X$-reconstructible.}

\leo{Now assume that the neighbours of~$y$ and~$z$ are in different triangles in~$M$. Then, the neighbour of~$z$ is also in a triangle in~$M^\cP_y \sim N_y$. On the other hand, the neighbour of~$z$ is not in a triangle in~$N$, since $N^\cP_z\sim N_z$. Hence, in~$N$, the neighbours of~$y$ and~$z$ are part of a 4-cycle. Consider again two leaves~$x,p\in X\setminus \{y,z\}$. In $N^\cP_x\sim M^\cP_x$ and in~$N^\cP_p\sim M^\cP_p$, the neighbours of~$y$ and~$z$ are in a triangle or 4-cycle. This is only possible when, in~$M$, the neighbours of (without loss of generality)~$x$ and~$y$ are in one triangle while the neighbours of~$p$ and~$z$ are in a different triangle, and the two triangles are adjacent. This implies that there are no other leaves, i.e. $X=\{x,y,z,p\}$, and again~$N$ is a simple level-1 network on~$X$. This again leads to a contradiction since~$N$ is $X$-reconstructible.}
\end{proof}

\leo{In particular, we have the following.}

\begin{corollary}\label{cor:xdecknetworks} \leo{Let~$N$ be a phylogenetic network on~$X$ with~$|X|\geq 4$. If the $X$-deck of~$N$ consists of only phylogenetic networks, then~$N$ is reconstructible from its phylogenetic $X$-deck if and only if~$N$ is $X$-reconstructible.}
\end{corollary}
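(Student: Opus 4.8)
The plan is to prove the two directions of the equivalence separately: one of them needs no hypothesis on the $X$-deck, and the other reduces directly to Proposition~\ref{prop:phylodeck}.

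For the forward implication there is nothing new to do. As already observed just before Proposition~\ref{prop:phylodeck}, if two phylogenetic networks on~$X$ have equivalent $X$-decks then they have equivalent phylogenetic $X$-decks, so reconstructibility from the phylogenetic $X$-deck always implies $X$-reconstructibility, regardless of the form of the $X$-deck.

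The work is in the backward implication. Assume the $X$-deck of~$N$ consists only of phylogenetic networks and that~$N$ is $X$-reconstructible. The key step is to show $N^{\cP}_y\sim N_y$ for every~$y\in X$. Fixing~$y$, the pseudo-network~$N_y$ is by assumption a phylogenetic network, hence it has no degree-2 vertex and no pair of multi-edges, and it has no blob incident to exactly two cut-edges, since contracting such a blob would create a vertex of degree at most~$2$, contradicting that~$T(N_y)$ is a phylogenetic tree. Therefore none of the operations~(i)--(iii) from the definition of~$N^{\cP}_y$ applies to~$N_y$; since~$N_y$ is obtained from~$N$ by deleting~$y$ and exhaustively suppressing degree-2 vertices, and~$N_y$ is then a fixed point of all three operations, the reduction defining~$N^{\cP}_y$ terminates at~$N_y$, that is, $N^{\cP}_y\sim N_y$. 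Applying Proposition~\ref{prop:phylodeck} with $Y=X$ --- legitimate since $|X|\geq 4\geq 2$, since~$N$ is $X$-reconstructible, and since $N^{\cP}_y\sim N_y$ for all $y\in X$ --- then yields that~$N$ is reconstructible from its phylogenetic $X$-deck.

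The point requiring the most care is the identity $N^{\cP}_y\sim N_y$: it uses implicitly that the reduction producing~$N^{\cP}_y$ is well-defined (confluent), so that reaching the fixed point~$N_y$ by performing operation~(i) first already produces~$N^{\cP}_y$. It is also worth noting that the hypothesis on the $X$-deck enters twice here: once to make~$N_y$ a fixed point of~(i)--(iii), and, implicitly, because suppressing degree-2 vertices while forming~$N_y$ cannot have created a multi-edge or a blob with two incident cut-edges, as otherwise~$N_y$ would not be a phylogenetic network. Beyond this, no deeper input is required.
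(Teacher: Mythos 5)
Your proof is correct and follows the same route the paper intends: the forward direction is the general observation stated just before Proposition~\ref{prop:phylodeck}, and the backward direction is Proposition~\ref{prop:phylodeck} applied with $Y=X$, which is exactly why the paper introduces the corollary with ``In particular.'' Your explicit verification that a phylogenetic network $N_y$ is a fixed point of operations (i)--(iii), so that $N^\cP_y\sim N_y$, is a detail the paper leaves implicit, and it is argued correctly.
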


\leo{Note that Corollary~\ref{cor:xdecknetworks} does not hold when~$|X|=3$, see Figure~\ref{fig:phyloXdeck2}.}

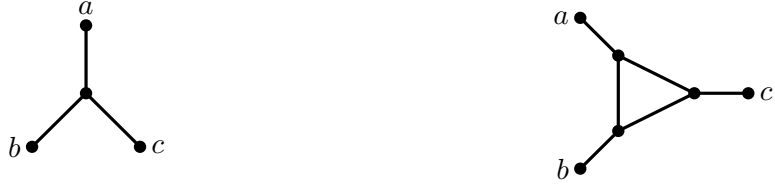
\begin{figure}
\centering
 \begin{tikzpicture}
	 \tikzset{lijn/.style={very thick}}
	\begin{scope}[xshift=0cm,yshift=0cm]
	\draw[very thick, fill, radius=0.06] (0,0.9) circle node[above] {$a$};
	\draw[very thick, fill, radius=0.06] (-0.71,-0.71) circle node[left] {$b$};
	\draw[very thick, fill, radius=0.06] (0,0) circle;
	\draw[very thick, fill, radius=0.06] (0.71,-0.71) circle node[right] {$c$};
	\draw[lijn] (0,0) -- (0,0.9);
	\draw[lijn] (0,0) -- (0.71,-0.71);
	\draw[lijn] (0,0) -- (-0.71,-0.71);
	\end{scope}
	
	\begin{scope}[xshift=6cm,yshift=-1.5cm]
	\draw[very thick, fill, radius=0.06] (0.5,0.5) circle node[left] {$b$};
	\draw[very thick, fill, radius=0.06] (1,1) circle;
	\draw[very thick, fill, radius=0.06] (2,1.5) circle;
	\draw[very thick, fill, radius=0.06] (1,2) circle;
	\draw[very thick, fill, radius=0.06] (2.71,1.5) circle node[right] {$c$};
	\draw[very thick, fill, radius=0.06] (0.5,2.5) circle node[left] {$a$};
	\draw[lijn] (0.5,0.5) -- (1,1);
	\draw[lijn] (0.5,2.5) -- (1,2);
	\draw[lijn] (1,1) -- (1,2);
	\draw[lijn] (1,1) -- (2,1.5);
	\draw[lijn] (1,2) -- (2,1.5);
	\draw[lijn] (2,1.5) -- (2.71,1.5);
	\end{scope}

		\end{tikzpicture}
\caption{\label{fig:phyloXdeck2} \leo{Phylogenetic networks on~$X=\{a,b,c\}$ that are $X$-reconstructible but not reconstructible from their phylogenetic $X$-deck.}}
\end{figure}

\begin{theorem}\label{thm:recursion}
Let~$\cN$ be a class of phylogenetic networks such \leo{that} each element of~$\cN$ has at least five leaves and, for each element~$N$ of~$\cN$ with at least six leaves, the phylogenetic~$X$-deck of~$N$ is \leo{equivalent to} a subset of~$\cN$. Then~$\cN$ is phylogenetically-reconstructible if and only if it is quarnet-reconstructible.
\end{theorem}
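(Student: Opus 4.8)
The plan is to prove the two implications separately; the forward implication (phylogenetically-reconstructible $\Rightarrow$ quarnet-reconstructible) will go by induction on the number of leaves. Both directions rest on one structural fact about the recursive operator $Q$, which I would establish first as a lemma: for every phylogenetic network $N$ on $X$ with $|X|\ge 5$ and every $x\in X$, the quarnets of $N$ not involving $x$ are, up to the equivalence $\sim$ of sets of networks, exactly the quarnets of $N^\cP_x$; that is, $Q(N^\cP_x)=\{\,q\in Q(N)\mid x\notin L(q)\,\}$. The engine behind this is that the three reduction rules defining $N^\cP_x$ are confluent (and terminating), so $N^\cP_x$ is well defined up to $\sim$, and, more to the point, that deleting two leaves in either order gives the same pseudo-network, $(N^\cP_x)^\cP_y\sim(N^\cP_y)^\cP_x$ for distinct $x,y$. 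Granting these, the lemma follows by induction on $|X|$ from the defining identity $Q(N)=\bigcup_{z\in X}Q(N^\cP_z)$ (the inclusion $Q(N^\cP_x)\subseteq\{q\in Q(N)\mid x\notin L(q)\}$ is immediate, and the reverse inclusion uses the induction hypothesis on $N^\cP_z$ together with the commutation). I would also record the routine facts that $Q$ is invariant under $\sim$, and that the union of the leaf-sets of the quarnets in $Q(N)$ equals $L(N)$, so that any $N'$ with $Q(N')\sim Q(N)$ automatically has $L(N')=X$. I expect the confluence and commutation check to be the only genuinely delicate point; everything after it is recursion and bookkeeping.

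For the implication quarnet-reconstructible $\Rightarrow$ phylogenetically-reconstructible, let $N\in\cN$, so $|X|\ge 5$, and let $M$ be a phylogenetic network whose phylogenetic $X$-deck is equivalent to that of $N$. Comparing the unions of the leaf-sets of the two decks forces $L(M)=X$. Using $\sim$-invariance of $Q$ and matching each $M^\cP_x$ with the equivalent member of the phylogenetic $X$-deck of $N$, one gets $Q(M)=\bigcup_{x\in X}Q(M^\cP_x)\;\sim\;\bigcup_{x\in X}Q(N^\cP_x)=Q(N)$. Since $\cN$ is quarnet-reconstructible and $N\in\cN$, this gives $M\sim N$. (Note that this direction uses only $|X|\ge 5$, which makes the identity $Q(N)=\bigcup_x Q(N^\cP_x)$ available, and not the hypothesis that the phylogenetic decks of members of $\cN$ stay inside $\cN$.)

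For the converse, phylogenetically-reconstructible $\Rightarrow$ quarnet-reconstructible, I would induct on $n=|X|\ge 5$. Let $N\in\cN$ and let $N'$ be any phylogenetic network with $Q(N')\sim Q(N)$, so $L(N')=X$. If $n=5$, then each $N^\cP_x$ has four leaves, so $Q(N)=\bigcup_x\{N^\cP_x\}$ is precisely the phylogenetic $X$-deck of $N$, and likewise $Q(N')$ is the phylogenetic $X$-deck of $N'$; hence $N$ and $N'$ have equivalent phylogenetic $X$-decks, and phylogenetic-reconstructibility of $\cN$ applied to $N$ gives $N'\sim N$. If $n\ge 6$, fix $x\in X$: the bijection witnessing $Q(N)\sim Q(N')$ preserves leaf-sets, hence restricts to an equivalence between the quarnets avoiding $x$ on the two sides, which by the lemma are $Q(N^\cP_x)$ and $Q(N'^\cP_x)$, so $Q(N^\cP_x)\sim Q(N'^\cP_x)$. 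Now $N^\cP_x$ is a phylogenetic network on $X\setminus\{x\}$ with $n-1\ge 5$ leaves, and since $N\in\cN$ has at least six leaves, $N^\cP_x$ is equivalent to some $\bar N_x\in\cN$; as $\bar N_x$ has fewer than $n$ leaves, the induction hypothesis makes $\bar N_x$ reconstructible from its quarnets, and from $Q(N'^\cP_x)\sim Q(N^\cP_x)\sim Q(\bar N_x)$ we conclude $N'^\cP_x\sim\bar N_x\sim N^\cP_x$. Since this holds for every $x\in X$, the phylogenetic $X$-decks of $N$ and $N'$ are equivalent, and phylogenetic-reconstructibility of $\cN$ applied to $N$ yields $N'\sim N$, completing the induction.

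The main obstacle, as flagged, is the structural lemma $Q(N^\cP_x)=\{q\in Q(N)\mid x\notin L(q)\}$, or rather its engine: confluence of the reduction rules and the resulting commutation $(N^\cP_x)^\cP_y\sim(N^\cP_y)^\cP_x$. Once that is in place, the two implications reduce to the manipulations above, which only repackage $Q(N)=\bigcup_x Q(N^\cP_x)$ and invoke the two reconstructibility hypotheses on $\cN$ together with the hypothesis that phylogenetic decks of $\ge 6$-leaf members of $\cN$ remain in $\cN$.
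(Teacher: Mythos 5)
Your proof is correct and follows essentially the same route as the paper: the quarnet-reconstructible $\Rightarrow$ phylogenetically-reconstructible direction falls out of $Q(N)=\bigcup_{x}Q(N^\cP_x)$, and the converse is the same induction on the number of leaves with base case $|X|=5$, where the phylogenetic $X$-deck coincides with $Q(N)$. The only difference is that you make explicit the structural fact $Q(N^\cP_x)\sim\{q\in Q(N)\mid x\notin L(q)\}$ and the underlying commutation $(N^\cP_x)^\cP_y\sim(N^\cP_y)^\cP_x$, which the paper's much terser proof uses implicitly without comment.
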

\begin{proof}
If~$\cN$ is quarnet-reconstructible then it is phylogenetically-reconstructible since if two phylogenetic networks~$N,N'\in \cN$ have \leo{equivalent} phylogenetic $X$-deck\leo{s} then \leo{it follows directly that}~$Q(N)\leo{\sim}Q(N')$.

Now suppose that~$\cN$ is phylogenetically-reconstructible. We prove by induction on~$i$ that each~$N\in\cN$ with at most~$i$ leaves is \leo{quarnet-reconstructible}. If~$i=5$ then the phylogenetic $X$-deck of~$N$ is equal to~$Q(N)$ and therefore~$N$ is \leo{quarnet-reconstructible}. Now suppose~$i\geq 6$. Since~$N$ is reconstructible from its~$X$-deck and each element of its $X$-deck is, by induction,  \leo{quarnet-reconstructible}, $N$ is  \leo{quarnet-reconstructible}.
\end{proof}

First observe that each phylogenetic tree on~$X$ with~$|X|\geq 5$ is reconstructible from its phylogenetic $X$-deck by Theorem~\ref{thm:trees} \leo{and Proposition~\ref{prop:phylodeck}}. Hence, the class of phylogenetic trees with at least five leaves is phylogenetically reconstructible.

However, a similar argument cannot be used to show that even the class of level-1 networks is phylogenetically reconstructible. Therefore, it is interesting to study which classes of networks are phylogenetically reconstructible.


\begin{theorem}\label{thm:lev3phylo}
The class of level-3 phylogenetic networks with at least five leaves is phylogenetically reconstructible.
\end{theorem}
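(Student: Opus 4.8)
The plan is to recover $T(N)$ from the phylogenetic $X$-deck and then to split into the decomposable and non-decomposable cases. For the tree part, one checks that taking $T(\cdot)$ commutes with the three simplification operations in the definition of $N^\cP_x$, so that $T(N^\cP_x) = (T(N))_x$ for every $x \in X$ (compare Observation~\ref{obs:treedeck}); hence the phylogenetic $X$-deck of $N$ determines the ordinary $X$-deck of $T(N)$, and since $|X| \geq 5$, Theorem~\ref{thm:trees} recovers $T(N)$ itself. In particular we learn which vertices of $T(N)$ are blob-vertices and, for each cut-edge, the partition of $X$ that it induces.

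Suppose first that $N$ is decomposable, so that $T(N)$ has a nontrivial edge $\bar{e}$ inducing a split $A \mid B$ with $|A|, |B| \geq 2$; fix $a \in A$ and $b \in B$. The key point is that deleting $b$ and then applying operations (i)--(iii) only removes or merges edges on the $B$-side of $\bar{e}$: no simplification crosses $\bar{e}$, because the $A$-endpoint of $\bar{e}$ retains its degree and $B \setminus \{b\} \neq \emptyset$. Consequently, in $N^\cP_b$ the unique cut-edge realizing the split $A \mid B \setminus \{b\}$ has an $A$-side component equal, as a network with a marked attachment vertex, to the $A$-side of $\bar{e}$ in $N$; symmetrically $N^\cP_a$ exhibits the $B$-side. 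Gluing the two sides by an edge between the attachment vertices reconstructs $N$. (This argument in fact works at every level.)

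Now suppose $N$ is not decomposable. If $N$ is a tree then it is a star, whose $X$-deck consists of smaller stars, so Corollary~\ref{cor:xdecknetworks} and Theorem~\ref{thm:trees} apply; otherwise $N$ has a single blob. Such an $N$ with $|X| \geq 5$ has a blob with at least five (trivial) incident cut-edges, so no leaf-deletion ever collapses it, and the only leaves $x$ with $N^\cP_x \not\sim N_x$ are those whose neighbour lies in a triangle of $N$ --- which, one checks, forces $x$ to be the unique leaf lying on one edge of a multi-edge of the underlying generator. When the $X$-deck of $N$ already consists of phylogenetic networks, Corollary~\ref{cor:xdecknetworks} together with Theorem~\ref{thm:lev4} finishes. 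Otherwise we invoke Proposition~\ref{prop:phylodeck}, for which we must produce two \emph{safe} leaves $y, z$ (neighbours not in a triangle) whose pair is reconstructing. The generators at level at most~3 are the level-2 generator, and the two level-3 generators $K_4$ and the ``twin-bubble'' (two parallel pairs of edges joined into a $4$-cycle of vertices); only the latter two carry multi-edges, so on a $K_4$-network every leaf is safe and a reconstructing pair comes straight from the proof of Theorem~\ref{thm:lev4}. For the level-2 and twin-bubble cases we revisit that proof and, for each distribution of the $\geq 5$ leaves over the edges of the generator, check that the witnessing leaves furnished by Lemmas~\ref{lem:3chain}, \ref{lem:4cycle}, \ref{lem:bubble} and~\ref{lem:3incident} can be chosen safe: a leaf on an edge carrying at least two leaves, or on any cycle of length at least~$4$, is automatically safe, and with $|X| \geq 5$ there is always enough leaf-budget to select such a pair.

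The main obstacle is precisely this last step: reconciling the leaf-choices in the proof of Theorem~\ref{thm:lev4}, which are geared to ordinary leaf-reconstruction and may land on a leaf next to a triangle, with the safety requirement of Proposition~\ref{prop:phylodeck}. I expect it to reduce to a finite but fiddly case analysis over the two multi-edge generators at level $\leq 3$ together with the distributions of few leaves on them (which is where Figure~\ref{fig:phylodeck} is instructive), plus the routine disposal of non-binary single-blob configurations --- where a high-degree neighbour of a leaf is never suppressed, so such leaves are trivially safe --- and of the base case $|X| = 5$.
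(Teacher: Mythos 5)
Your overall strategy matches the paper's: recover $T(N)$ from the phylogenetic $X$-deck, handle decomposable networks by the same gluing argument (this is the paper's Theorem~\ref{thm:cutedgephylo}), and for simple networks apply Proposition~\ref{prop:phylodeck} after locating two ``safe'' leaves (those with $N^\cP_y\sim N_y$) that form a reconstructing pair, via a case analysis over the level-$\leq 3$ generators. Your identification of the unsafe leaves (neighbour in a triangle, hence the unique leaf on one edge of a generator multi-edge whose partner edge is empty) is also the observation the paper relies on.

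The gap is in the step you yourself flag as the main obstacle, and it is not merely fiddly: your claim that ``with $|X|\geq 5$ there is always enough leaf-budget to select such a pair'' of safe reconstructing leaves is false in one configuration, which is exactly the case the paper has to treat separately. Take the twin-bubble level-3 generator, $|X|=5$, with the single leaf $x$ alone on one pair of multi-edges (so its neighbour is in a triangle and $x$ is unsafe) and the remaining four leaves $a,b,c,d$ all on the other pair of multi-edges. The reconstructing pair supplied by Lemma~\ref{lem:4cycle}(i) has the form $\{d,e\}$ with $e\in X\setminus\{a,b,c,d\}$, and the only candidate for $e$ is the unsafe leaf $x$; no safe pair to which Proposition~\ref{prop:phylodeck} applies is produced by your toolkit. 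The paper resolves this by stepping outside Proposition~\ref{prop:phylodeck}: it reruns the argument of Lemma~\ref{lem:4cycle}(i) directly on the phylogenetic deck, observing that even though $N^\cP_x\not\sim N_x$, the network $N^\cP_x$ still contains a unique cycle through the neighbours of $a,b,c,d$, which suffices to place $d$ uniquely. Your proof needs either this extra argument or some substitute for it; as written, the plan stalls precisely at this case. (The rest of your case analysis is also only sketched --- e.g.\ the level-2 configuration where one generator edge carries a single leaf and another is empty --- but those cases do go through along the lines you indicate, as in the paper.)
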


To prove this theorem, we will first show that an analogue of Theorem~\ref{thm:cutedge} holds.

\begin{theorem}\label{thm:cutedgephylo}
The class of decomposable phylogenetic networks with at least five leaves is phylogenetically reconstructible.
\end{theorem}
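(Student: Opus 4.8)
The plan is to mimic the proof of Theorem~\ref{thm:cutedge} as closely as possible, replacing the $X$-deck by the phylogenetic $X$-deck throughout, and invoking Corollary~\ref{cor:xdecknetworks} (equivalently Proposition~\ref{prop:phylodeck}) wherever the two notions need to be bridged. First I would check that the class $\cN$ of decomposable phylogenetic networks on $X$ with $|X|\geq 5$ is phylogenetically recognizable: a phylogenetic network $N$ on $X$ belongs to $\cN$ if and only if every element of its phylogenetic $X$-deck has exactly four leaves and at most two of these elements have no nontrivial cut-edge. The ``only if'' direction follows because deleting a leaf $x$ from a decomposable network and applying operations (i)--(iii) can only destroy the nontriviality of at most the one or two cut-edges incident to the neighbour of $x$ (and the resulting network is still a phylogenetic network by the Observation). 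The ``if'' direction requires a short case analysis: if $N$ were simple (one blob), then removing any of its at least five leaves yields a network still having a blob unless the blob was very small, and one checks that at most two leaves can lie ``close enough'' to kill the blob entirely, but in fact for $|X|\geq 5$ a simple network always leaves a non-decomposable element in more than two slots of the phylogenetic deck---this is the combinatorial heart of recognizability and needs to be written carefully.

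For weak phylogenetic reconstructibility, let $N\in\cN$ have a nontrivial cut-edge $e$ inducing the split $A|B$, with $a_1,a_2\in A$ and $b_1,b_2\in B$. The key point is that for a leaf $x$ lying on the ``far side'' of a nontrivial split, deleting $x$ never triggers operation (iii) in a way that affects the cut-edge $e$: since $A$ and $B$ each have at least two elements, the edge $e$ remains a genuine nontrivial cut-edge in $N^\cP_{a_1}$, $N^\cP_{a_2}$, $N^\cP_{b_1}$, $N^\cP_{b_2}$, and moreover on the side of $e$ not containing $x$ we have $N^\cP_x \sim N_x$ restricted to that side. Concretely, the component of $N^\cP_{a_1}-f$ (where $f$ is the unique edge inducing $A\setminus\{a_1\}\,|\,B$) containing $B$ equals the component of $N-e$ containing $B$, and symmetrically for $b_1$. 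Since $T(N)$ is reconstructible from the phylogenetic $X$-deck (by Theorem~\ref{thm:trees}, Observation~\ref{obs:treedeck}, and the fact that $T(N^\cP_x)=T(N)_x$, noting operations (i)--(iii) do not change the induced tree), any $M\in\cN$ with an equivalent phylogenetic $X$-deck has the split $A|B$, hence a unique edge $e'$ inducing it; gluing the two reconstructed halves $N_A$ and $N_B$ along $e'$ recovers $M$, so $M\sim N$. I would follow the skeleton of the Theorem~\ref{thm:cutedge} proof verbatim here, only inserting the justifications that operations (i)--(iii) are harmless on the relevant side.

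The main obstacle I anticipate is precisely the interaction with operation (iii): collapsing a blob with exactly two incident cut-edges. This can fire when the neighbour of the deleted leaf $x$ sits in a triangle (or small blob) that, after $x$'s removal, becomes a blob flanked by two cut-edges. One must verify that choosing $a_1,a_2,b_1,b_2$ to be leaves whose neighbours are \emph{not} in such a configuration is always possible when $|X|\geq 5$---or, more robustly, argue that even if collapsing occurs, it happens strictly ``inside'' the $A$-side or the $B$-side and so is recorded consistently in the deck elements used for reconstruction. An alternative cleaner route, which I would probably adopt to sidestep delicate casework, is: use Corollary~\ref{cor:cutedge} to pick two leaves $x,y$ at maximum distance in $T(N)$, note that then $N^\cP_x\sim N_x$ and $N^\cP_y\sim N_y$ (because the neighbours of such extremal leaves cannot lie in a triangle collapsed by operation (iii)), and apply Proposition~\ref{prop:phylodeck} with $Y=\{x,y\}$ together with Theorem~\ref{thm:cutedge} (which gives $\{x,y\}$-reconstructibility via Corollary~\ref{cor:cutedge}). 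This reduces the whole theorem to checking the hypothesis $N^\cP_y\sim N_y$ for maximally-distant leaves, plus recognizability, which is the lightest possible argument.
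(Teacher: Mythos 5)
Your primary route is exactly the paper's proof: recognizability by counting deck elements without nontrivial cut-edges, reconstruction of $T(N)$ from the phylogenetic $X$-deck via $(T(N))^\cP_x = T(N^\cP_x)$, and then the gluing argument from the end of the proof of Theorem~\ref{thm:cutedge}. Your worry about operation (iii) is resolved the second way you suggest, not the first: since $N$ is a phylogenetic network, it has no blob with precisely two incident cut-edges to begin with, so deleting $a_1\in A$ only triggers operations (i)--(iii) on the $A$-side of the cut-edge $e$ (as $|B|\geq 2$ guarantees $e$ survives as a nontrivial cut-edge), and the component of $N^\cP_{a_1}-f$ containing $B$ is untouched. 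There is no need to choose $a_1,a_2,b_1,b_2$ avoiding triangles, which is fortunate because that is not always possible.

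However, the ``cleaner alternative'' that you say you would probably adopt is broken, and you should not use it. The claim that leaves $x,y$ at maximum distance in $T(N)$ satisfy $N^\cP_x\sim N_x$ and $N^\cP_y\sim N_y$ is false: the network $N$ of Figure~\ref{fig:phylodeck} is decomposable with five leaves, every leaf's neighbour lies in a triangle (so $N^\cP_z\not\sim N_z$ for \emph{every} $z\in X$, as the caption notes --- no element of its $X$-deck is a phylogenetic network), and in particular the maximally distant pairs such as $a,c$ have neighbours in triangles. So Proposition~\ref{prop:phylodeck} cannot be applied to such networks with any choice of $Y$, and the reduction you hoped for does not exist. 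Stick with your first route, which is the paper's.
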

\begin{proof}
The proof is very similar to that of Theorem~\ref{thm:cutedge}. As in that proof, first note that a phylogenetic network has at least one nontrivial cut-edge if and only if at most two elements of its phylogenetic $X$-deck do not. Let~$N$ be some phylogenetic network on~$X$ with at least one nontrivial cut-edge and~$|X|\geq 5$. Since $({T(N)})^\cP_x = T({N}^\cP_x)$, \leo{for all~$x\in X$}, we can reconstruct~$T(N)$ from the phylogenetic $X$-deck of~$N$. We can then use exactly the same argument as in the last part of the proof \leo{of} Theorem~\ref{thm:cutedge} to show that~$N$ is reconstructible from its phylogenetic $X$-deck (see Figure~\ref{fig:phylodeck} for an illustration).
\end{proof}

We now prove Theorem~\ref{thm:lev3phylo}.
\begin{proof}
By Theorem~\ref{thm:cutedgephylo}, it suffices to consider simple level-$k$ networks with~$1\leq k\leq 3$. For simple level-1 networks, the phylogenetic $X$-deck is precisely equal to the $X$-deck and we are done \leo{by Proposition~\ref{prop:phylodeck}.}

Now consider a simple level-2 network~$N$ and its underlying generator~$G$. If the phylogenetic $X$-deck of~$N$ is not equal to its $X$-deck then one of the three edges of~$G$ contains exactly one leaf~$x$, another edge of~$G$ contains no leaves, and the third edge of~$G$ contains all other leaves~$X\setminus\{x\}$. Then~$N$ is $\{y,z\}$-reconstructible for any $y,z\in X\setminus\{x\}$ with distance between them at least~4. Since $N_y^\cP=N_y$ and $N_z^\cP=N_z$ we are done \leo{by Proposition~\ref{prop:phylodeck}.}

Therefore, we may assume that~$N$ is a simple level-3 network. Suppose the phylogenetic $X$-deck of~$N$ is not equal to its $X$-deck. Then the underlying generator~$G$ of~$N$ is not equal to~$K_4$ (since~$K_4$ does not have any multi-edges). Hence,~$G$ is the other level-3 generator, see Figure~\ref{fig:generators}. Moreover, at least one pair of multi-edges contains precisely one leaf, say leaf~$x$. The other pair of multi-edges contains at least one leaf~$y$.

If there is at least one leaf~$z$ on an edge that is not in a pair of multi-edges, then it is straightforward to check that, wherever you put leaves~$p,q\in X\setminus\{x,y,z\}$, there is a cycle containing the neighbours of leaves~$a,b,c,d$ satisfying the conditions of Lemma~\ref{lem:4cycle}(i) and a fifth leaf~$e$ such that $N_d^\cP=N_d$ and $N_e^\cP=N_e$, \leo{and we are done by Proposition~\ref{prop:phylodeck}.}

The only remaining case is that all leaves in~$X\setminus\{x\}$ are on the pair of multi-edges not containing~$x$. Then there is again a cycle containing the neighbours of leaves~$a,b,c,d$ satisfying the conditions of Lemma~\ref{lem:4cycle}(i) and a fifth leaf~$e$ such that $N_d^\cP=N_d$. However, if~$|X|=5$ then the only choice for~$e$ is~$e=x$ and hence $N_e^\cP \leo{\not\sim} N_e$. Nevertheless, we can use a similar argument as in the proof of Lemma~\ref{lem:4cycle}(i) since $N_e^\cP$ does contain a unique cycle containing the neighbours of~$a,b,c$ and~$d$.
\end{proof}

\begin{corollary}\label{cor:lev3quarnets}
Any level-3 phylogenetic network is reconstructible from its quarnets.
\end{corollary}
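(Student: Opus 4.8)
The plan is to obtain the corollary as an immediate consequence of Theorem~\ref{thm:recursion} and Theorem~\ref{thm:lev3phylo}, applied to the class $\cN$ of level-3 phylogenetic networks with at least five leaves. The only genuine verification needed is that $\cN$ satisfies the hypothesis of Theorem~\ref{thm:recursion}; the rest is bookkeeping.

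First I would check closure: if $N\in\cN$ has at least six leaves, then $N^\cP_x\in\cN$ for every leaf $x$. The leaf count is immediate, $|X|-1\geq 5$, and the Observation preceding the definition of quarnets already tells us $N^\cP_x$ is a phylogenetic network on $X\setminus\{x\}$. For the level, one notes that deleting the leaf $x$ cannot raise the reticulation number of any biconnected component, and neither can any of the three cleanup operations used to form $N^\cP_x$: suppressing a degree-2 vertex leaves $|E|-|V|$ unchanged, while replacing a pair of multi-edges by a single edge and collapsing a blob with two incident cut-edges each only remove edges and vertices from the relevant block. Hence $N^\cP_x$ is again level-3, so the phylogenetic $X$-deck of $N$ is a subset of $\cN$.

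With that, Theorem~\ref{thm:recursion} applies to $\cN$ and yields that $\cN$ is quarnet-reconstructible if and only if it is phylogenetically reconstructible; Theorem~\ref{thm:lev3phylo} supplies the latter. Therefore every level-3 phylogenetic network with at least five leaves is reconstructible from its quarnets. For the remaining cases, a level-3 network with exactly four leaves is itself a quarnet, so $Q(N)=\{N\}$ determines it trivially, and networks with fewer than four leaves lie outside the scope of the definition of $Q(N)$.

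I do not expect a real obstacle, since the corollary mainly repackages the two preceding theorems; the one point deserving care is the claim that the multi-edge-collapse and blob-collapse operations do not increase the level, because these operations can change the block structure (splitting or deleting a block), and one must confirm that the maximum block reticulation number cannot rise as a side effect.
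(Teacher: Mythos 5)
Your proposal is correct and matches the paper's (implicit) argument: the corollary is stated as an immediate consequence of Theorem~\ref{thm:lev3phylo} combined with Theorem~\ref{thm:recursion} applied to the class of level-3 networks with at least five leaves. Your extra verification that this class is closed under the operation $N\mapsto N^\cP_x$ (none of the three cleanup operations can raise the level) is a detail the paper leaves unstated, and it checks out.
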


\section{Edge-reconstructibility}\label{sec:idge}

In this section we shall consider the problem of reconstructing a phylogenetic network from
its edge-deleted networks. We first formalize this concept (cf. \cite[Section 2]{bh77} for a review of edge-reconstruction in graphs).

Given a phylogenetic network~$N$ and an edge~$e\in E(N)$, the pseudo-network~$N_e$ is the result of deleting edge~$e$ from~$N$ and suppressing resulting degree-2 vertices. The \emph{edge-deck} of~$N$ is the multiset $\{N_e \mid e\in E(N)\}$. An \emph{edge-reconstruction} of a network~$N$ on~$X$ is a network~$N'$ on~$X$ with~$E(N')=E(N)$ and~$N'_e\sim N_e$ for all~$e\in E(N)$. \leo{Note that by $E(N')=E(N)$ we do not mean that the edges of~$N$ are the same pairs of vertices as the edges of~$N'$, but that there exists a bijection $f:E(N)\rightarrow E(N')$ which we assume to be the identity.} We call a phylogenetic network~$N$ \emph{edge-reconstructible} if every edge-reconstruction of~$N$ is equivalent to~$N$.

\begin{lemma}\label{lem:leafedge}
Let~$N$ be a phylogenetic network on~$X$. If~$N$ is leaf-reconstructible then it is edge-reconstructible.
\end{lemma}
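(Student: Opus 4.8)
The plan is to show that any two edge-reconstructions of $N$ are equivalent, assuming $N$ is leaf-reconstructible. The key idea is to recover the $X$-deck of $N$ from its edge-deck, and then invoke leaf-reconstructibility. First I would observe that the leaves of $N$ correspond to the leaf-edges (trivial cut-edges incident to a degree-1 vertex), so from the edge-deck we can identify, for each $x\in X$, the member $N_{e_x}$ of the edge-deck where $e_x$ is the edge incident to the leaf labelled $x$. The point is that $N_{e_x}$ --- the result of deleting the edge $e_x$ and suppressing degree-2 vertices --- is obtained from $N$ by first isolating the vertex $x$ and then suppressing; after discarding the now-isolated labelled vertex $x$, what remains is precisely $N_x$, the member of the $X$-deck obtained by deleting the leaf $x$. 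So each leaf-edge-deleted member of the edge-deck determines the corresponding member of the $X$-deck.

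Next I would argue that given an edge-reconstruction $N'$ of $N$, the bijection $f\colon E(N)\to E(N')$ (which we take to be the identity) must map leaf-edges to leaf-edges: an edge $e$ is a leaf-edge of $N$ precisely when $N_e$ has an isolated labelled vertex (equivalently, $N_e$, viewed appropriately, has one fewer leaf than $N$ has), and this is a property of the isomorphism type of $N_e$, hence preserved. Moreover the label $x$ attached to that isolated vertex is determined by $N_e\sim N'_e$. Therefore for every $x\in X$ we get $N'_{e_x}\sim N_{e_x}$, and by the previous paragraph this yields $N'_x\sim N_x$ for all $x\in X$; that is, $N'$ is an $X$-reconstruction of $N$. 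I should also check the vertex-count bookkeeping: deleting a leaf-edge removes one edge and, after suppression, the two neighbours of the suppressed degree-2 vertices — this needs to be squared with the definition of $N_x$ (delete the leaf and suppress), but it is routine that the two pseudo-networks agree once the isolated labelled vertex is removed, and that $|V(N')| = |V(N)|$ follows from $|E(N')|=|E(N)|$ together with $N$ and $N'$ having the same (reconstructible) degree sequence.

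Then, since $N$ is leaf-reconstructible and $N'$ is an $X$-reconstruction of $N$ with $V(N')=V(N)$, we conclude $N'\sim N$, so $N$ is edge-reconstructible.

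The main obstacle I expect is the careful handling of the degenerate small cases and of the suppression operation: when $N$ has very few leaves (e.g. $|X|=2$, or a leaf-edge whose deletion disconnects $N$ into a single edge plus an isolated vertex), one must check that "delete the leaf-edge and suppress" really does produce (a pseudo-network equivalent to) "delete the leaf and suppress" with an extra isolated labelled vertex, and that no information is lost in identifying which labelled vertex became isolated. A secondary subtlety is justifying that the identity bijection $E(N)\to E(N')$ sends leaf-edges to leaf-edges without circularity — this should follow purely from the isomorphism types of the edge-deleted pseudo-networks, but it needs to be stated cleanly.
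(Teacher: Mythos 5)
Your proposal is correct and follows essentially the same route as the paper, whose entire proof is the observation that for a leaf-edge $e$ incident to $x\in X$, $N_e\sim N'_e$ if and only if $N_x\sim N'_x$, so that an edge-reconstruction is automatically a leaf-reconstruction. Your additional care about identifying leaf-edges from isomorphism types and the vertex-count bookkeeping just makes explicit what the paper leaves implicit.
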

\begin{proof}
This follows directly from the observation that $N_e \sim N'_e$ if and only if $N_x \sim N'_x$ for each edge~$e$ that has \leo{an} endpoint~$x\in X$ \leo{in both~$N$ and~$N'$}.
\end{proof}

However, there exist edge-reconstructible networks that are not leaf-reconstructible, see the examples in Figure~\ref{fig:notreconstructible}.

\begin{figure}
\centering
 \begin{tikzpicture}
	 \tikzset{lijn/.style={very thick}}
	\begin{scope}[xshift=0cm,yshift=-0.5cm]
	\draw[very thick, fill, radius=0.06] (0.5,0.5) circle node[left] {$b$};
	\draw[very thick, fill, radius=0.06] (2.5,0.5) circle node[right] {$d$};
	\draw[very thick, fill, radius=0.06] (1,1) circle;
	\draw[very thick, fill, radius=0.06] (2,1) circle;
	\draw[very thick, fill, radius=0.06] (0.5,1.5) circle node[left] {$a$};
	\draw[very thick, fill, radius=0.06] (2.5,1.5) circle node[right] {$c$};
	\draw[lijn] (0.5,0.5) -- (1,1);
	\draw[lijn] (0.5,1.5) -- (1,1);
	\draw[lijn,dashed] (1,1) -- (2,1);
	\draw[lijn] (2,1) -- (2.5,0.5);
	\draw[lijn] (2,1) -- (2.5,1.5);
	\end{scope}
	
	\begin{scope}[xshift=6cm,yshift=-0.5cm]
	\draw[very thick, fill, radius=0.06] (0.5,0.5) circle node[left] {$d$};
	\draw[very thick, fill, radius=0.06] (2.5,0.5) circle node[right] {$b$};
	\draw[very thick, fill, radius=0.06] (1,1) circle;
	\draw[very thick, fill, radius=0.06] (2,1) circle;
	\draw[very thick, fill, radius=0.06] (0.5,1.5) circle node[left] {$a$};
	\draw[very thick, fill, radius=0.06] (2.5,1.5) circle node[right] {$c$};
	\draw[lijn] (0.5,0.5) -- (1,1);
	\draw[lijn] (0.5,1.5) -- (1,1);
	\draw[lijn] (1,1) -- (2,1);
	\draw[lijn] (2,1) -- (2.5,0.5);
	\draw[lijn] (2,1) -- (2.5,1.5);
	\end{scope}

	\begin{scope}[xshift=0cm,yshift=-3.5cm]
	\draw[very thick, fill, radius=0.06] (0.5,0.5) circle node[left] {$b$};
	\draw[very thick, fill, radius=0.06] (2.5,0.5) circle node[right] {$d$};
	\draw[very thick, fill, radius=0.06] (1,1) circle;
	\draw[very thick, fill, radius=0.06] (2,1) circle;
	\draw[very thick, fill, radius=0.06] (1,2) circle;
	\draw[very thick, fill, radius=0.06] (2,2) circle;
	\draw[very thick, fill, radius=0.06] (0.5,2.5) circle node[left] {$a$};
	\draw[very thick, fill, radius=0.06] (2.5,2.5) circle node[right] {$c$};
	\draw[lijn] (0.5,0.5) -- (1,1);
	\draw[lijn] (0.5,2.5) -- (1,2);
	\draw[lijn,dashed] (1,1) -- (2,1);
	\draw[lijn] (1,2) -- (2,2);
	\draw[lijn] (1,1) -- (1,2);
	\draw[lijn] (2,1) -- (2,2);
	\draw[lijn] (2,1) -- (2.5,0.5);
	\draw[lijn] (2,2) -- (2.5,2.5);
	\end{scope}
	
	\begin{scope}[xshift=6cm,yshift=-3.5cm]
	\draw[very thick, fill, radius=0.06] (0.5,0.5) circle node[left] {$d$};
	\draw[very thick, fill, radius=0.06] (2.5,0.5) circle node[right] {$b$};
	\draw[very thick, fill, radius=0.06] (1,1) circle;
	\draw[very thick, fill, radius=0.06] (2,1) circle;
	\draw[very thick, fill, radius=0.06] (1,2) circle;
	\draw[very thick, fill, radius=0.06] (2,2) circle;
	\draw[very thick, fill, radius=0.06] (0.5,2.5) circle node[left] {$a$};
	\draw[very thick, fill, radius=0.06] (2.5,2.5) circle node[right] {$c$};
	\draw[lijn] (0.5,0.5) -- (1,1);
	\draw[lijn] (0.5,2.5) -- (1,2);
	\draw[lijn] (1,1) -- (2,1);
	\draw[lijn] (1,2) -- (2,2);
	\draw[lijn] (1,1) -- (1,2);
	\draw[lijn] (2,1) -- (2,2);
	\draw[lijn] (2,1) -- (2.5,0.5);
	\draw[lijn] (2,2) -- (2.5,2.5);
	\end{scope}
	
	\begin{scope}[xshift=0cm,yshift=-6cm]
	\draw[very thick, fill, radius=0.06] (0,0) circle node[below] {$b$};
	\draw[very thick, fill, radius=0.06] (0.5,0.5) circle;
	\draw[very thick, fill, radius=0.06] (1.5,0.5) circle;
	\draw[very thick, fill, radius=0.06] (2.5,0.5) circle;
	\draw[very thick, fill, radius=0.06] (1.5,0) circle node[below] {$c$};
	\draw[very thick, fill, radius=0.06] (3,0) circle node[below] {$d$};
	\draw[lijn] (0,0) -- (.5,.5);
	\draw[lijn] (1.5,0) -- (1.5,.5);
	\draw[lijn] (3,0) -- (2.5,.5);
	\draw[lijn] (1.5,0.5) -- (.5,.5);
	\draw[lijn,dashed] (1.5,0.5) -- (2.5,.5);
	\draw[very thick, fill, radius=0.06] (1,1) circle;
	\draw[very thick, fill, radius=0.06] (2,1) circle;
	\draw[lijn] (1,1) -- (2,1);
	\draw[lijn] (0.5,0.5) -- (1,1);
	\draw[lijn] (2.5,0.5) -- (2,1);
	\draw[very thick, fill, radius=0.06] (1.5,1.5) circle;
	\draw[very thick, fill, radius=0.06] (1.5,2) circle node[above] {$a$};
	\draw[lijn] (1,1) -- (1.5,1.5);
	\draw[lijn] (2,1) -- (1.5,1.5);
	\draw[lijn] (1.5,1.5) -- (1.5,2);
	\end{scope}

	\begin{scope}[xshift=6cm,yshift=-6cm]
	\draw[very thick, fill, radius=0.06] (0,0) circle node[below] {$b$};
	\draw[very thick, fill, radius=0.06] (0.5,0.5) circle;
	\draw[very thick, fill, radius=0.06] (1.5,0.5) circle;
	\draw[very thick, fill, radius=0.06] (2.5,0.5) circle;
	\draw[very thick, fill, radius=0.06] (1.5,0) circle node[below] {$d$};
	\draw[very thick, fill, radius=0.06] (3,0) circle node[below] {$c$};
	\draw[lijn] (0,0) -- (.5,.5);
	\draw[lijn] (1.5,0) -- (1.5,.5);
	\draw[lijn] (3,0) -- (2.5,.5);
	\draw[lijn] (1.5,0.5) -- (.5,.5);
	\draw[lijn] (1.5,0.5) -- (2.5,.5);
	\draw[very thick, fill, radius=0.06] (1,1) circle;
	\draw[very thick, fill, radius=0.06] (2,1) circle;
	\draw[lijn] (1,1) -- (2,1);
	\draw[lijn] (0.5,0.5) -- (1,1);
	\draw[lijn] (2.5,0.5) -- (2,1);
	\draw[very thick, fill, radius=0.06] (1.5,1.5) circle;
	\draw[very thick, fill, radius=0.06] (1.5,2) circle node[above] {$a$};
	\draw[lijn] (1,1) -- (1.5,1.5);
	\draw[lijn] (2,1) -- (1.5,1.5);
	\draw[lijn] (1.5,1.5) -- (1.5,2);
	\end{scope}
	
	\begin{scope}[xshift=0cm,yshift=-8.5cm]
	\draw[very thick, fill, radius=0.06] (0,0) circle node[below] {$a$};
	\draw[very thick, fill, radius=0.06] (0.5,0.5) circle;
	\draw[very thick, fill, radius=0.06] (1.5,0.5) circle;
	\draw[very thick, fill, radius=0.06] (2.5,0.5) circle;
	\draw[very thick, fill, radius=0.06] (1.5,0) circle node[below] {$b$};
	\draw[very thick, fill, radius=0.06] (3,0) circle node[below] {$c$};
	\draw[lijn] (0,0) -- (.5,.5);
	\draw[lijn] (1.5,0) -- (1.5,.5);
	\draw[lijn] (3,0) -- (2.5,.5);
	\draw[lijn] (1.5,0.5) -- (.5,.5);
	\draw[lijn, dashed] (1.5,0.5) -- (2.5,.5);
	\draw[very thick, fill, radius=0.06] (0.5,1) circle;
	\draw[very thick, fill, radius=0.06] (1.5,1) circle;
	\draw[very thick, fill, radius=0.06] (2.5,1) circle;
	\draw[lijn] (0.5,1) -- (2.5,1);
	\draw[lijn] (0.5,0.5) -- (0.5,1);
	\draw[lijn] (2.5,0.5) -- (2.5,1);
	\draw[very thick, fill, radius=0.06] (1.5,1.5) circle;
	\draw[lijn] (0.5,1) -- (1.5,1.5);
	\draw[lijn] (2.5,1) -- (1.5,1.5);
	\draw[lijn] (1.5,1.5) -- (1.5,1);
	\end{scope}
	
	\begin{scope}[xshift=6cm,yshift=-8.5cm]
	\draw[very thick, fill, radius=0.06] (0,0) circle node[below] {$a$};
	\draw[very thick, fill, radius=0.06] (0.5,0.5) circle;
	\draw[very thick, fill, radius=0.06] (1.5,0.5) circle;
	\draw[very thick, fill, radius=0.06] (2.5,0.5) circle;
	\draw[very thick, fill, radius=0.06] (1.5,0) circle node[below] {$c$};
	\draw[very thick, fill, radius=0.06] (3,0) circle node[below] {$b$};
	\draw[lijn] (0,0) -- (.5,.5);
	\draw[lijn] (1.5,0) -- (1.5,.5);
	\draw[lijn] (3,0) -- (2.5,.5);
	\draw[lijn] (1.5,0.5) -- (.5,.5);
	\draw[lijn] (1.5,0.5) -- (2.5,.5);
	\draw[very thick, fill, radius=0.06] (0.5,1) circle;
	\draw[very thick, fill, radius=0.06] (1.5,1) circle;
	\draw[very thick, fill, radius=0.06] (2.5,1) circle;
	\draw[lijn] (0.5,1) -- (2.5,1);
	\draw[lijn] (0.5,0.5) -- (0.5,1);
	\draw[lijn] (2.5,0.5) -- (2.5,1);
	\draw[very thick, fill, radius=0.06] (1.5,1.5) circle;
	\draw[lijn] (0.5,1) -- (1.5,1.5);
	\draw[lijn] (2.5,1) -- (1.5,1.5);
	\draw[lijn] (1.5,1.5) -- (1.5,1);
	\end{scope}

	\end{tikzpicture}
\caption{\label{fig:notreconstructible} Pairs of phylogenetic networks that are not leaf-reconstructible but that are edge-reconstructible. The dashed edges indicate an edge~$e$ such that~$N_e$ is not contained in the edge-deck of the other network of the pair.}
\end{figure}
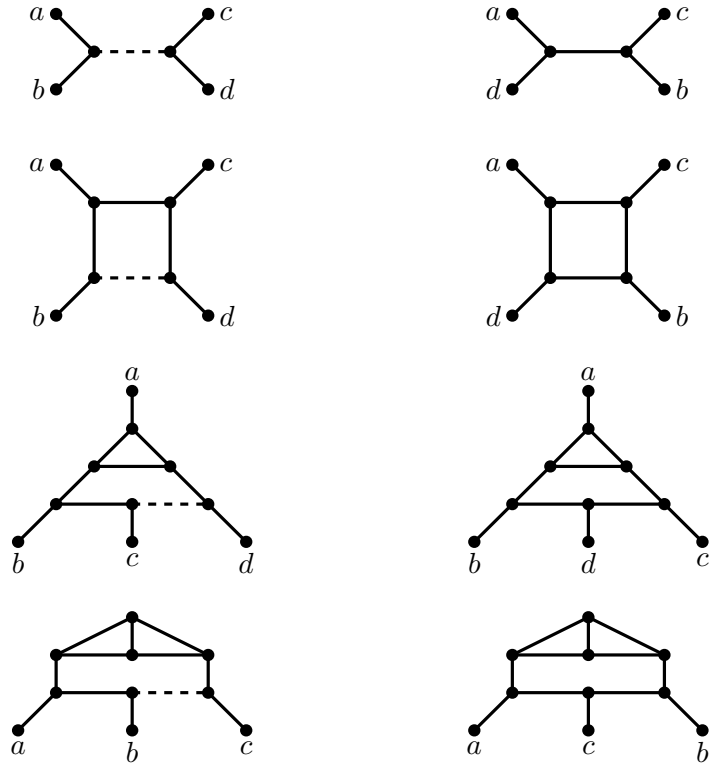

When considering edge-reconstructability of binary networks we can, by Theorem~\ref{thm:cutedge} and Lemma~\ref{lem:leafedge}, again restrict to simple networks.

We say that $(x,y)$ is a \emph{2-chain} of a phylogenetic network~$N$ on~$X$ if~$x,y\in X$ and \leo{the distance between~$x$ and~$y$ in~$N$ is~3}.

\begin{proposition}\label{prop:2chain}
Any simple binary phylogenetic network on~$X$ containing a 2-chain is edge-reconstructible.
\end{proposition}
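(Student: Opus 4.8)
\emph{Proof plan.}
Write $u$ for the neighbour of $x$ and $v$ for the neighbour of $y$, so that $e^\ast:=uv$ is an edge of $N$, and let $p$ (resp.\ $q$) be the third neighbour of $u$ (resp.\ $v$). Since $N$ is simple it has no nontrivial cut-edges, so $p$ and $q$ are not leaves: if, say, $p$ were a leaf then $u$ would have only one non-pendant edge and $uv$ would be a nontrivial cut-edge, contradicting simplicity. As with the leaf-reconstruction results, I would split the argument into an (edge-)recognizability part and a weak-reconstructibility part. For recognizability: being binary is edge-reconstructible by the same counting used to show that the number of edges, vertices, the reticulation number and the level are leaf-reconstructible (combined with Lemma~\ref{lem:leafedge}); being simple is edge-recognizable because a disconnected element of the edge-deck with a trivial (singleton) component arises exactly by deleting a pendant edge, whereas deleting a nontrivial cut-edge instead produces a disconnected element with two nonsingleton components, and deleting a blob edge keeps the network connected; and the presence of a 2-chain can be read off the pendant-edge elements of the deck (for $|X|\ge 4$; the finitely many shapes with $|X|\le 3$ are checked directly). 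So the task reduces to weak reconstructibility: let $N'$ be a simple binary network with the same edge-deck as $N$.

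The next step is to recover the leaf-deck. The elements of an edge-deck containing an isolated vertex are precisely those obtained by deleting a pendant edge, and the label of that isolated vertex identifies the deleted leaf; hence $N'_z\sim N_z$ for every $z\in X$. In particular $N'_x\sim N_x$, and since both $N$ and $N'$ are obtained from $N_x$ by attaching the leaf $x$ to an edge, it now suffices to show that they attach $x$ to the same edge (up to equivalence of $N_x$).

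The crucial step uses the distinguished deck element $N_{e^\ast}$. In $N_{e^\ast}$ the vertices $u$ and $v$ are suppressed, so $p$ becomes the neighbour of the leaf $x$ and $q$ becomes the neighbour of the leaf $y$. On the other hand, in $N_x$ the edge to which $x$ must be attached to produce $N$ is the edge obtained by suppressing $u$, namely the edge joining $v$ (the neighbour of $y$, which is identifiable in $N_x$) to $p$; the only competing choice among the edges incident to $v$ in $N_x$ is the edge to $q$ (attaching $x$ to the pendant edge $vy$ would create a cherry, not a 2-chain). The plan is to argue that these two choices produce networks with different edge-decks, the discrepancy occurring on $N_{e^\ast}$: deleting the analogous edge in the network obtained by attaching $x$ between $v$ and $q$ makes $q$ the neighbour of $x$ and $p$ the neighbour of $y$, and one checks that no other edge-deletion of that network recovers $N_{e^\ast}$. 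Hence $N'_{e^\ast}\sim N_{e^\ast}$ forces $N'$ to attach $x$ to the edge towards $p$, so $N'\sim N$. The degenerate configurations ($p=q$, so that $u,v,p$ form a triangle; $u$ or $v$ incident to a second leaf; $|X|\le 3$, so that there is no spare leaf) are finitely many and are handled separately.

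The hardest part will be making ``the attachment edge is forced'' precise: this is an edge-reconstruction analogue of the case analysis in the proof of Lemma~\ref{lem:3chain}, but without any leaf-count hypothesis to exclude symmetric situations, so one must in addition show that whenever $N_x$ admits an automorphism interchanging $p$ and $q$ --- the only way $N_{e^\ast}$ alone could fail to distinguish the two candidate attachments --- the two resulting networks are in fact equivalent, so $N$ is still pinned down. A secondary technical point is the edge-recognizability of ``simple binary network containing a 2-chain'', which has to be re-established from scratch, since Corollary~\ref{cor:recog} and the surrounding recognizability statements are phrased for the $X$-deck rather than the edge-deck.
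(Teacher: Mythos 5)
Your overall strategy differs from the paper's: you recover the leaf-deck from the edge-deck and then try to pin down the attachment edge of $x$ in $N_x$ using the single deck element $N_{e^\ast}$, whereas the paper reconstructs $N$ directly from $N_{e^\ast}$ (re-inserting the deleted edge between subdivisions of the pendant edges of $x$ and $y$) and justifies that this re-insertion is forced by comparing invariants across deck elements. The gap in your plan sits exactly where the two approaches diverge. When you write that ``the only competing choice among the edges incident to $v$ in $N_x$ is the edge to $q$,'' you have silently assumed that any edge-reconstruction $N'$ attaches $x$ to an edge incident to $v$ at all, i.e.\ that the specific pair $(x,y)$ is still a 2-chain of $N'$. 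Your recognizability step only shows that $N'$ contains \emph{some} 2-chain, not that $(x,y)$ is one; a priori $N'$ could attach $x$ anywhere in $N_x$. Establishing that $(x,y)$ is forced to remain a 2-chain is the heart of the proof, and it is nontrivial because edge deletion can \emph{create} 2-chains: for instance, if $p$ and $q$ are adjacent in $N$, then $(x,y)$ is still a 2-chain in $N_{e^\ast}$, so that deck element alone cannot tell whether the deleted edge restored this 2-chain or was inserted somewhere else entirely. The paper handles precisely this with a two-case analysis on whether $(x,y)$ remains a 2-chain in $N_{e^\ast}$, introducing the maximum length of an $xy$-\emph{ladder} as an invariant: this length is unchanged by deleting an edge off the ladder but drops by one when $e^\ast$ is deleted, and that discrepancy is what forces the re-insertion site. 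Nothing in your plan plays this role.

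A second, smaller point: the difficulty you do flag --- automorphisms of $N_x$ swapping $p$ and $q$ --- is real but secondary, and your proposed disambiguation of the $vp$ versus $vq$ attachment via $N_{e^\ast}$ (comparing which of $p,q$ ends up adjacent to $x$) only becomes meaningful once the candidates have been reduced to those two edges. Your recognizability and leaf-deck-recovery steps are fine and consistent with the paper (the 2-chain recognizability needs the small observation that deleting a leaf from a 2-chain-free network cannot create a 2-chain, since such a creation would force a 3-chain and hence already a 2-chain in $N$), but as written the weak-reconstructibility argument does not go through.
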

\begin{proof}
\leo{The fact} that~$N$ \leo{is simple can} be recognized by considering three elements of its edge-deck~\leo{$N_{e_1},N_{e_2},N_{e_3}$} such that each of~$e_1,e_2,e_3$ is incident to a leaf. Since each of~$N_{e_1},N_{e_2},N_{e_3}$ \leo{consists of a simple network and an isolated vertex}, any edge-reconstruction of~$N$ is simple.

Suppose that~$N$ has a 2-chain $(x,y)$. Let~$u$ and~$v$ be the neighbours of~$x$ and~$y$ in~$N$ respectively and~\leo{$e=\{u,v\}$}. Let~$u'$ and~$v'$ be the neighbours of~$x$ and~$y$ in~$N_e$ respectively. 

First suppose that $(x,y)$ is not a 2-chain in $N_{e}$. There exists at least one \leo{edge}~$f$ that is not incident to~$u$ or~$v$. Since $(x,y)$ is a 2-chain in $N_{f}$, we can uniquely reconstruct~$N$ from~$N_{e}$ by subdividing the edges \leo{$\{u',x\}$ and $\{v',y\}$} and creating a new edge between the subdividing vertices.

Now suppose that $(x,y)$ is also a 2-chain in $N_{e}$. We say that a network has an $xy$-\emph{ladder} of \emph{length}~$k$ if there exist disjoint paths~$(x,u_1,\ldots ,u_k)$ and $(y,v_1,\ldots ,v_k)$ such that~$u_i$ and~$v_i$ are adjacent for~$1\leq i\leq k$. Let~$p\geq 1$ be the maximum length of an $xy$-ladder in~$N$. Take any such ladder and observe that there exists at least one \leo{edge}~$g$ that is not incident to any vertex of the ladder. Then the maximum length of an $xy$-ladder is~$p$ in~$N_g$ and is~$p-1$ in~$N_e$. Hence, we can again uniquely reconstruct~$N$ from~$N_{e}$ by subdividing the edges \leo{$\{u',x\}$ and $\{v',y\}$} and creating a new edge between the subdividing vertices.
\end{proof}

The following corollary can be proved in a similar way to Corollaries~\ref{cor:levk} and~\ref{cor:manyleaves}.
\clearpage

\begin{corollary}\label{cor:edgelevk}
\noindent\begin{itemize}
\item[(i)] Any simple binary level-$k$ phylogenetic network on~$X$ with~$k\geq 2 $ and~$|X|\geq 3k-2$ is edge-reconstructible.
\item[(ii)] Any binary phylogenetic network~$N=(V,E)$ on~$X$ with~$|X|\geq \min\{3(|E|-|V|)+1,5\}$ is edge-reconstructible.
\end{itemize}
\end{corollary}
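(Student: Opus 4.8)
\section*{Proof proposal for Corollary~\ref{cor:edgelevk}}

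The plan is to mimic the proofs of Corollaries~\ref{cor:levk} and~\ref{cor:manyleaves}, with Proposition~\ref{prop:2chain} (and $2$-chains) playing the role there played by Lemma~\ref{lem:3chain} (and $3$-chains), and with Lemma~\ref{lem:leafedge} used to transfer the leaf-reconstruction results of Section~\ref{sec:decomp} into the edge setting.

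For part~(i) I would reuse the counting argument from the proof of Corollary~\ref{cor:levk}, only it is even tighter here. Let $G$ be the underlying generator of the simple binary level-$k$ network $N$, with $k\geq 2$. Since $G$ is a $3$-regular multigraph with $|E(G)|-|V(G)|+1=k$, combining this with $3|V(G)|=2|E(G)|$ forces $|E(G)|=3k-3$. Each leaf of $N$ lies on exactly one edge of $G$ (its neighbour is a degree-$2$ vertex that gets suppressed into a unique edge). If some edge of $G$ carried two leaves, then two leaves lying consecutively along the path that is suppressed into that edge would be at distance~$3$ in $N$, i.e.\ would form a $2$-chain. Hence if $N$ has no $2$-chain, every edge of $G$ carries at most one leaf, so $|X|\leq|E(G)|=3k-3<3k-2$, contradicting the hypothesis. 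Therefore $N$ contains a $2$-chain and is edge-reconstructible by Proposition~\ref{prop:2chain}.

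For part~(ii) I would split into the same three cases as in the proof of Corollary~\ref{cor:manyleaves}. If $N$ has a nontrivial cut-edge, then $N$ is decomposable, so (for $|X|\geq 5$) $N$ is leaf-reconstructible by Theorem~\ref{thm:cutedge} and hence edge-reconstructible by Lemma~\ref{lem:leafedge}. If $N$ is a simple level-$1$ network, then its unique blob is a cycle on at least three vertices and, since $N$ has no nontrivial cut-edge, every vertex of this cycle is the neighbour of a leaf (otherwise the incident non-cycle edge would be a nontrivial cut-edge); any two leaves whose neighbours are adjacent on the cycle then form a $2$-chain, so $N$ is edge-reconstructible by Proposition~\ref{prop:2chain}. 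Finally, if $N$ is simple level-$k$ with $k\geq 2$, then $|E|-|V|+1=k$, so $|X|\geq 3(|E|-|V|)+1=3k-2$ and part~(i) applies.

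Most of this is routine, and I expect the only points needing care to be structural bookkeeping rather than anything substantive. The main such point is the claim underpinning part~(i): in a simple binary network with at least four leaves, every neighbour of a leaf is a degree-$3$ vertex with exactly one leaf attached (else the incident non-leaf edge would be a nontrivial cut-edge), so that two leaves on the same generator edge that are consecutive along its subdivision path are genuinely at distance~$3$, whereas leaves on distinct generator edges sit at distance at least~$4$. The other mild subtleties are that level-$1$ simple networks must be treated separately, since they do not possess a $3$-regular underlying generator, and that the few decomposable networks with fewer than five leaves are small enough to be checked directly; neither causes any real difficulty.
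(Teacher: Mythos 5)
Your proposal is correct and coincides with the paper's intended argument: the paper gives no written proof of this corollary beyond the remark that it ``can be proved in a similar way to Corollaries~\ref{cor:levk} and~\ref{cor:manyleaves}'', and your proof is exactly that adaptation --- substituting 2-chains and Proposition~\ref{prop:2chain} for 3-chains and Lemma~\ref{lem:3chain}, so that the absence of a 2-chain forces at most one leaf per generator edge and hence $|X|\le |E(G)|=3k-3$, with Lemma~\ref{lem:leafedge} and Theorem~\ref{thm:cutedge} handling the decomposable case. The only loose end, which you already flag, is the handful of small-leaf boundary cases technically admitted by the $\min\{\cdot,5\}$ formulation; that looseness is inherited from the statement and proof of Corollary~\ref{cor:manyleaves} itself and is not a defect introduced by your argument.
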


\section{Discussion}\label{sec:discuss}

In this paper we have introduced the concept of leaf-recon\-structible phylogenetic networks.
We have shown that several large classes of phylogenetic networks are leaf-reconstructible, and 
used our results to show that level-3 networks are defined by their quarnets.
\leo{We conjecture that all unrooted phylogenetic networks with 5 or more leaves are leaf-reconstructible. We expect that this could be a difficult conjecture to settle, as with
other variants of the graph reconstruction conjecture.}

In another direction, it could be of interest to also consider leaf-reconstructibility of
nonbinary networks. In Theorem~\ref{thm:trees}, we showed that nonbinary phylogenetic trees are leaf-reconstructible, \leo{and in Theorem~\ref{thm:cutedge} that even all decomposable nonbinary phylogenetic networks are leaf-reconstructible, but what about non-decomposable nonbinary networks?}
\leo{The following related question} could also be worth considering:
If every nonbinary phylogenetic network with at least five leaves is leaf-reconstructible, then is every graph reconstructible?  

\leo{In Section~\ref{sec:idge}}, we considered edge-reconstructibility, a variant of the leaf-recon\-struc\-tibility problem.
Another variant that should be considered is leaf-reconstructibility for
directed phylogenetic networks. This is an important class of networks, in which the 
networks are directed acyclic graphs, with a single root and leaves labeled by the set~\leo{$X$}. 
In \cite{information} certain examples of directed phylogenetic 
networks are presented which indicate that such networks may not be leaf-reconstructible\leo{, but it remains an open problem whether or not this is the case (note that not all digraphs are reconstructible~\cite{stock77}).}

In the longer term, it would be interesting to consider leaf-reconstructibility of
networks that arise in biological settings. Indeed, even if not every network is
leaf-reconstructible, it may be that counter-examples are somewhat unlikely to occur
as evolutionary histories (e.g. if they are highly symmetric). 

One way to approach this could be to consider random networks.
As we have seen in Corollary~\ref{cor:almostall}, for any 
fixed~$k$, almost all level-$k$ phylogenetic networks are leaf-reconstructible.
It would be interesting to know whether or not almost all phylogenetic networks on a fixed leaf-set 
are leaf-reconstructible. In this context, it is 
worth noting that almost every graph has reconstructing number three \cite{b90}.
We have shown that \leo{decomposable and \leo{binary} level-4 networks with at least five leaves have reconstruction number at most~2}. So, do almost all \leo{(binary)} phylogenetic networks have reconstruction number \leo{at most}~2?  

\leo{Finally}, it would be interesting 
to consider leaf-reconstructibilty of
networks that are generated accord\leo{ing} to some model of molecular evolution (see e.g.~\cite{felsenstein} for a review of such models). This would 
be somewhat analogous to recent ground-breaking 
work on reconstructibility of pedigrees in a 
stochastic setting \cite{thatte11,thatte07}, and could focus on models such 
as those presented in, for example, \cite{luay}.

\section*{Acknowledgements}
Part of this work was conducted while Vincent Moulton was visiting the TU Delft on visitors grant 040.11.529 funded by the Netherlands Organization for Scientific Research (NWO). Leo van Iersel was partially supported by NWO, including Vidi grant 639.072.602, and partially by the 4TU Applied Mathematics Institute.

\bibliographystyle{siamplain}
\providecommand{\bysame}{\leavevmode\hbox to3em{\hrulefill}\thinspace}
\providecommand{\MR}{\relax\ifhmode\unskip\space\fi MR }
\providecommand{\MRhref}[2]{%
  \href{http://www.ams.org/mathscinet-getitem?mr=#1}{#2}
}
\providecommand{\href}[2]{#2}

\end{document}